\newcommand{\note}[1]{{\textcolor{red}{[#1]}}\@latex@warning{Note: #1}}
\theoremstyle{plain}
\newtheorem{theorem}{Theorem}
\newtheorem{corollary}[theorem]{Corollary}
\newtheorem{proposition}[theorem]{Proposition}
\newtheorem{lemma}[theorem]{Lemma}
\theoremstyle{definition}
\newcommand{\ind}{\mathbf{1}}
\newcommand{\R}{\mathbb{R}}
\newcommand{\C}{\mathbb{C}}
\newcommand{\N}{\mathbb{N}}
\newcommand{\Z}{\mathbb{Z}}
\newcommand{\rmd}{\mathrm{d}}
\newcommand{\Var}{\operatorname{Var}}
\newcommand{\breakline}{\nonumber\\&\quad\quad\quad}
\newcommand{\Mod}{\mathcal{M}}
\newcommand{\WP}{\mathrm{WP}}
\newcommand{\tree}{\mathfrak{t}}
\newcommand{\polytope}{\mathcal{A}_\tree}
\newcommand{\dd}{\mathop{}\!\mathrm{d}}
\newcommand{\vertex}{\mathsf{v}}
\newcommand{\edge}{\mathsf{e}}
\newcommand{\treeset}{\mathfrak{T}}
\newcommand{\vsetinner}{\mathsf{V}}
\newcommand{\vsetboundary}{\mathsf{B}}
\newcommand{\edgeset}{\mathsf{E}}
\newcommand{\cornerset}{\mathsf{C}}
\newcommand{\corner}{\mathsf{c}}
\newcommand{\spinebijection}{\mathsf{Spine}}
\newcommand{\bvertex}{\mathsf{b}}
\newcommand{\hypwedge}{%
  \mathrel{\vcenter{\hbox{%
    \begin{tikzpicture}[scale=0.25, line join=round, line cap=round]
      \coordinate (A) at (0,0.9);
      \coordinate (B) at (1,0.9);
      \coordinate (C) at (0.5,0.);
      \draw[thick]
        (A) .. controls (0.3,0.5) and (0.35,0.6) .. (C)
        (C) .. controls (0.65,0.6) and (0.7,0.5) .. (B)
        (B) .. controls (0.5,1.1) and (0.5,1.1) .. (A);
    \end{tikzpicture}%
  }}}%
}
\newcommand{\hypdiamond}{%
  \mathrel{\vcenter{\hbox{%
    \begin{tikzpicture}[scale=0.29, line join=round, line cap=round]
      \coordinate (A) at (0.1,0.5);
      \coordinate (B) at (0.9,0.5);
      \coordinate (C) at (0.5,0);
	  \coordinate (D) at (0.5,1);
      \draw[thick]
        (A) .. controls (0.25,0.55) and (0.45,0.75) .. (D)
		(A) .. controls (0.25,0.45) and (0.45,0.25) .. (C)
        (B) .. controls (0.75,0.55) and (0.55,0.75) .. (D)
		(B) .. controls (0.75,0.45) and (0.55,0.25) .. (C)
		(A) -- (B);
    \end{tikzpicture}%
  }}}%
}
\newcommand{\triangulation}{%
  \mathrel{\vcenter{\hbox{%
    \begin{tikzpicture}[scale=0.4, line join=round, line cap=round]
      \coordinate (A) at (0,0.05);
      \coordinate (B) at (1,0.05);
      \coordinate (C) at (0.5,0.95);
      \draw[thick]
        (A) .. controls (0.4,0.4) and (0.4,0.6) .. (C)
        (C) .. controls (0.6,0.6) and (0.6,0.4) .. (B)
        (B) .. controls (0.6,0.2) and (0.4,0.2) .. (A);
    \end{tikzpicture}%
  }}}%
}
\DeclareRobustCommand{\cev}[1]{%
  \mathpalette\do@cev{#1}%
}
\newcommand{\do@cev}[2]{%
  \fix@cev{#1}{+}%
  \reflectbox{$\m@th#1\vec{\reflectbox{$\fix@cev{#1}{-}\m@th#1#2\fix@cev{#1}{+}$}}$}%
  \fix@cev{#1}{-}%
}
\newcommand{\fix@cev}[2]{%
  \ifx#1\displaystyle
    \mkern#23mu
  \else
    \ifx#1\textstyle
      \mkern#23mu
    \else
      \ifx#1\scriptstyle
        \mkern#22mu
      \else
        \mkern#22mu
      \fi
    \fi
  \fi
}
\begin{document}

\title{\bf A tree bijection for the moduli space of\\genus-$\boldsymbol{0}$ hyperbolic surfaces with boundaries}

\author{\textsc{Timothy Budd}\footnote{Email: \texttt{\href{mailto:t.budd@science.ru.nl}{t.budd@science.ru.nl}}}, \, \textsc{Thomas Meeusen} \, and \, \textsc{Bart Zonneveld} \\[5mm]
{\small IMAPP, Radboud University, Nijmegen, The Netherlands.}}
\date{\today}
\maketitle

\vspace{-6mm}
\begin{abstract}
The Weil-Petersson volume of genus-$g$ hyperbolic surfaces with geodesic boundaries is known since work of Mirzakhani to be polynomial in the boundary lengths.
We provide a bijective proof of this fact in the genus-$0$ case in the presence of a distinguished cusp.
It is based on a generalization of a recent tree bijection, by the first author and Curien, to the setting with geodesic boundaries, requiring an extension of the Bowditch-Epstein-Penner spine construction.
As an application of our tree bijection we establish an explicit formula for the distance-dependent three-point function, which records an exact metric statistic measuring the difference of two geodesic distances among a triple of distinguished cusps in a Weil-Petersson random surface.
We conclude with a discussion of the relevance of this function to the topological recursion of Weil-Petersson volumes and metric properties of Weil-Petersson random surfaces with many boundaries or cusps.  
\end{abstract}
\vspace{-3mm}
\begin{figure}[h!]
	\centering
	\includegraphics[width=\linewidth]{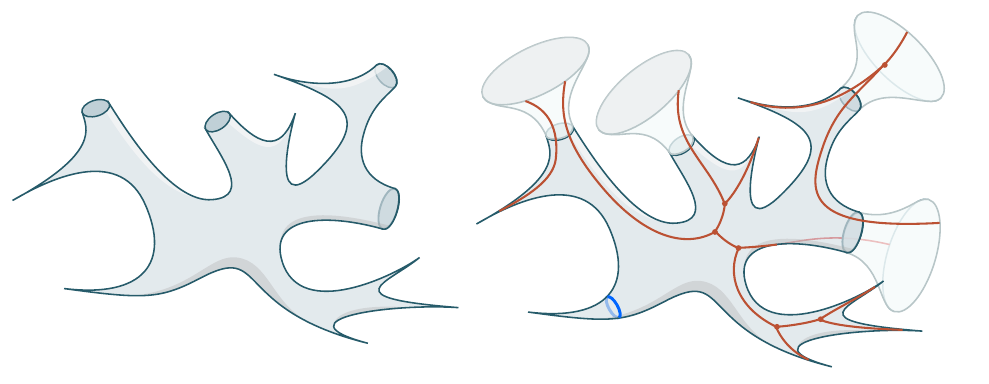}
	\caption{Illustration of a genus-$0$ hyperbolic surface $\mathsf{X}$ with cusps and geodesic boundaries (left). The bijection in this work considers the spine construction in the extended hyperbolic surface $\check{\mathsf{X}}$ (right), obtained by attaching funnels to the geodesic boundaries of $\mathsf{X}$. The tree in question corresponds to the cut locus of points that have multiple length-minimizing geodesics to a horocycle around the distinguished cusp, called the origin.\label{fig:bijection}}
\end{figure}

\tableofcontents

\section{Introduction}

Many families of random metrics on a fixed topological surface have been studied in the literature using equally diverse mathematical techniques.
Since these families often share universal characteristics, or, in the language of physics, can each be viewed as a version of \emph{two-dimensional quantum gravity}, it is natural to investigate whether any of the employed techniques are transferable.

One prominent family of models, with a long history in both mathematics and physics, is that of random combinatorial maps (or ribbon graphs).
The extensive toolkit used to study their enumeration, as well as statistical properties of uniformly sampled maps, includes analytic combinatorics, matrix integrals, integrable hierarchies, topological recursion, probabilistic exploration processes, and more. 
Yet there is one family of techniques, intrinsic to the combinatorial structure of maps, that has proved particularly powerful in recent decades: the \emph{bijective method}.
The idea is simple: a bijection between two combinatorial classes not only establishes their equinumeration, but also relates uniform random samples in both classes.

This is particularly effective when one of the classes consists of (decorated) combinatorial trees, because many combinatorial and probabilistic methods are readily available to analyse them.
Starting with the bijections of Cori--Vauquelin \cite{Cori_Planar_1981} and Schaeffer \cite{Schaeffer_Conjugaison_1998} (and even earlier work of Mullin \cite{Mullin_enumeration_1967}), a rich collection of such tree bijections has been identified in the planar-map literature, see for instance \cite{Bouttier_Planar_2004,Bernardi_bijection_2012,Albenque_generic_2015}.
As we will discuss in more detail below, these tree bijections have played a central role in establishing scaling limits for the metric of large random planar maps, notably in the proofs of convergence to the Brownian sphere \cite{LeGall_scaling_2010,Miermont_Brownian_2013}. 

The present work aims to transfer aspects of this bijective method from planar maps to the continuous setting of moduli spaces of planar hyperbolic surfaces equipped with their Weil-Petersson volume measures.
One may view the computation of Weil-Petersson volumes as a continuous analogue of the enumeration of maps, and the Weil-Petersson random hyperbolic surface as an analogue of the uniform random map.
The utility of a tree bijection in this context was already demonstrated by the first author and Curien \cite{budd2025random} by proving that the Weil-Petersson punctured hyperbolic sphere has the same metric scaling limit, in the form of the Brownian sphere, as many models of large random planar maps (see below for discussions).
The main goal of this work is to extend the bijective method to planar hyperbolic surfaces with geodesic boundaries.

\subsection{Main results}\label{sec:mainresults}

We start by explaining our main bijective result.
Recall that a hyperbolic surface with geodesic boundaries is a surface with a Riemannian metric of constant (Gaussian) curvature $-1$ such that the boundaries are closed hyperbolic geodesics. 
For $2g-2+n>0$ and $\mathbf{L} = (L_1,\ldots,L_n) \in \R_{\geq 0}^n$,  $\mathcal{M}_{g,n}(\mathbf{L})$ is the moduli space of hyperbolic surfaces with $n$ labeled boundaries of specified lengths $L_1,\ldots,L_n$, viewed modulo orientation-preserving (and boundary preserving) isometries.
Some (or all) of the boundaries are allowed to have zero length, in which case they correspond to cusps.
The moduli space $\mathcal{M}_{g,n}(\mathbf{L})$ comes with a natural volume measure $\operatorname{Vol}_\WP$ arising from its Weil-Petersson symplectic structure.
The volume of the full moduli space is finite and known as the Weil-Petersson volume $V_{g,n}(\mathbf{L})=\operatorname{Vol}_\WP(\mathcal{M}_{g,n}(\mathbf{L}))$.

In this work we focus exclusively on the spherical case $g=0$ and we will always assume that the first boundary is a cusp, so that the moduli space of interest is $\mathcal{M}_{0,1+n}(0,\mathbf{L})$ for $n\geq 2$ and $\mathbf{L}\in \R_{\geq0}^n$.
This special cusp will be called the \emph{origin} and the remaining boundaries will be considered to be labeled $1,\ldots,n$.
\begin{figure}[h!]
    \centering
    \includegraphics[width=.95\linewidth]{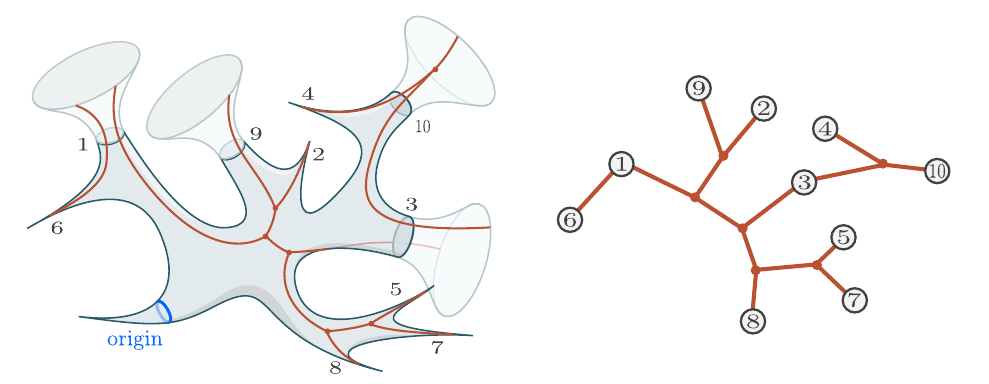}
    \caption{Example of the tree $\tree$ associated to a surface $\mathsf{X} \in \mathcal{M}_{0,1+n}(\mathbf{L})$, with in this case $n=10$ boundaries (of which $6$ are cusps and $4$ are geodesic) besides the origin cusp. The spine of the extended hyperbolic surface $\check{\mathsf{X}}$ has the structure of a bicolor plane tree $\tree \in \treeset_n$.\label{fig:bijectiontree}}
\end{figure}

\paragraph{Tree bijection.}
Inspired by the spine construction of Bowditch \& Epstein \cite{Bowditch_Natural_1988} and Penner \cite{Penner_decorated_1987}, already employed in \cite{budd2025random}, as well as tree bijections for planar maps (see the discussion in Section~\ref{sec:discussion}), it is natural to consider the \emph{spine} or \emph{cut-locus} associated to the origin in a hyperbolic surface $\mathsf{X} \in \mathcal{M}_{0,1+n}(0,\mathbf{L})$.
This is the subset of points of $\mathsf{X}$ that have more than one length-minimizing geodesic to the origin.
Due to the spherical topology the spine generally has the structure of a forest, i.e.\ a disjoint union of real trees.
To turn it into a useful combinatorial tree, one should consider the spine in the extended hyperbolic surface $\check{\mathsf{X}}$ obtained by attaching infinite hyperbolic funnels to $\mathsf{X}$ (Figure~\ref{fig:bijection}).
As will be explained in detail in Section~\ref{sec:bijection}, joining the endpoints of the infinite arcs of the spine that run into the same funnel, gives the spine the combinatorial structure of a plane tree $\tree$, see Figure~\ref{fig:bijectiontree}.
This tree together with a decoration by real labels, carefully chosen  to make the encoding bijective, is denoted $\mathsf{Spine}(\mathsf{X})$.

To make this precise, we consider the collection $\treeset_n$ of (red and white) bicolored plane trees $\tree$ with $n$ white vertices labeled $1,\ldots,n$, also called \emph{boundary} vertices, and an arbitrary number of red vertices of degree $3$, called \emph{inner} vertices.
We let $\vsetinner(\tree)$, $\vsetboundary(\tree)$ and $\edgeset(\tree)$ be the set of inner vertices, boundary vertices and edges of $\tree$.
We also denote by $\vec{\edgeset}(\tree)$ the oriented edges and for convenience we denote by $\vec{\edge},\cev{\edge}\in\vec{\edgeset}(\tree)$ the two orientations of an edge $\edge\in \edgeset(\tree)$.
We write $\vec{\edge}_{\vertex,1}, \vec{\edge}_{\vertex,2}, \vec{\edge}_{\vertex,3} \in \vec{\edgeset}(\tree)$ respectively $\vec{\edge}_{\bvertex,1}, \ldots, \vec{\edge}_{\bvertex,\deg(\bvertex)} \in \vec{\edgeset}(\tree)$ for the sequence of oriented edges starting at an inner vertex $\vertex$, respectively boundary vertex $\bvertex$, where it is assumed that at each vertex a counterclockwise ordering of the edges is chosen in an arbitrary deterministic fashion.
If $\mathbf{L} \in \R_{\geq 0}^n$, we will write $L_{\bvertex} = L_i$ in case the boundary vertex carries label $i$.
To properly accommodate cusps among the boundaries, we further introduce the restricted family of trees $\treeset_n(\mathbf{L}) \subset \treeset_n$, by requiring $\deg(\bvertex) = 1$ in case $L_\bvertex = 0$. 

To $\tree\in\treeset_n(\mathbf{L})$ we associate a $(2n-4)$-dimensional polytope $\mathcal{A}_{\tree}(\mathbf{L})\subset \R^{6n-6}$ as follows.
For $k\geq 1$ and $y >0$ we denote by $\Delta_k^{(y)} \coloneqq \{ \mathbf{x} \in (0,\infty)^{k} : x_1 + \cdots + x_{k} = y \}$ the open $(k-1)$-dimensional simplex of size $y$, and let $\Delta_1^{(0)} = \{0\}$ by convention.
Then
\begin{align}
	\mathcal{A}_{\tree}(\mathbf{L}) \coloneqq \left\{ \varphi : \vec{\edgeset}(\tree) \to \R_{\geq 0} \middle|\begin{array}{l}
	\varphi(\vec{\edge}_{\bvertex,i}) = 0\text{ for }\bvertex\in\vsetboundary(\tree), 1\leq i\leq\deg(\bvertex),\\
	\varphi(\vec{\edge}_{\vertex,i}) > 0\text{ for }\vertex\in \vsetinner(\tree),  1\leq i\leq 3,\\
	\varphi(\vec{\edge}_{\vertex,1})+\varphi(\vec{\edge}_{\vertex,2})+\varphi(\vec{\edge}_{\vertex,3}) = \pi\text{ for }\vertex\in \vsetinner(\tree), \\ 
	\varphi(\vec{\edge}) + \varphi(\cev{\edge}) < \pi\text{ for }\edge\in \edgeset(\tree)\end{array}\right\} \times \prod_{\bvertex\in \vsetboundary(\tree)} \left(\Delta^{(L_\bvertex/2)}_{\deg(\bvertex)}\right)^2.\label{eq:polytopedefgeneric}
\end{align}
This polytope is naturally equipped with a volume measure given by $2^{n-2}$ times the $(2n-4)$-dimensional Lebesgue measure
\begin{align}
	\operatorname{Vol}_\tree = 2^{n-2} \prod_{\vertex\in \vsetinner(\tree)}\rmd\varphi(\vec{\edge}_{\vertex,1})\rmd\varphi(\vec{\edge}_{\vertex,2}) \prod_{\bvertex\in \vsetboundary(\tree)} \prod_{j=1}^{\deg(\bvertex)-1}\rmd w_{\bvertex,j}\rmd v_{\bvertex,j}.\label{eq:measurepolytope}
\end{align}
Here $(w_{\bvertex,1},\ldots,w_{\bvertex,\deg(\bvertex)},v_{\bvertex,1},\ldots,v_{\bvertex,\deg(\bvertex)})\in \left(\Delta^{(L_\bvertex/2)}_{\deg(\bvertex)}\right)^2$ parametrize the pair of simplices associated to boundary vertex $\bvertex$.
Our main bijective result can then be formulated as follows.

\begin{theorem}\label{thm:introbijection}
	For $n\geq 2$, there exists an open subset $\mathcal{M}^\circ_{0,n+1}(0,\mathbf{L}) \subset \mathcal{M}_{0,1+n}(0,\mathbf{L})$ of full Weil-Petersson measure and a bijection
	\begin{align*}
		\mathsf{Spine} : \mathcal{M}^\circ_{0,n+1}(0,\mathbf{L}) \to \bigsqcup_{\tree\in\treeset_n(\mathbf{L})} \mathcal{A}_\tree(\mathbf{L})
	\end{align*}
	such that the pushforward measure $\mathsf{Spine}_* \operatorname{Vol}_\WP$ agrees with $\operatorname{Vol}_\tree$ on $\mathcal{A}_\tree(\mathbf{L})$. 
\end{theorem}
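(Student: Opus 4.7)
The plan is to adapt the spine construction of \cite{budd2025random}, which handles the all-cusp case, by performing it not on $\mathsf{X}$ itself but on its funnel extension $\check{\mathsf{X}}$. First I would fix a horocycle around the origin cusp and take the \emph{spine} to be the cut locus in $\check{\mathsf{X}}$ of points admitting several length-minimizing geodesics back to that horocycle. Standard two-dimensional cut-locus theory shows that, away from a closed subset of positive codimension in moduli space (which I would remove to define $\mathcal{M}^\circ_{0,1+n}(0,\mathbf{L})$), the spine is a finite embedded graph whose only branchings are trivalent and whose infinite strands live exclusively inside the $n$ attached funnels. Pairing the two strands entering the same funnel at their common point at infinity then turns the spine into a bicolored plane tree $\tree\in\treeset_n(\mathbf{L})$: the red inner vertices $\vsetinner(\tree)$ are the trivalent branchings, and each of the $n$ non-origin boundaries or cusps produces one labeled white vertex $\bvertex$ whose degree equals the number of adjacent strands, forcing $\deg(\bvertex)=1$ when $L_\bvertex=0$.

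The decorations are then geometric in origin. At an inner vertex $\vertex$, the three values $\varphi(\vec{\edge}_{\vertex,i})$ record local hyperbolic angle data at the branching; the identity $\sum_i\varphi(\vec{\edge}_{\vertex,i})=\pi$ should follow from a Gauss--Bonnet computation on the spine cell containing $\vertex$, and the open condition $\varphi(\vec{\edge})+\varphi(\cev{\edge})<\pi$ is exactly the requirement that the two spine cells sharing $\edge$ are genuinely distinct. At each boundary vertex $\bvertex$, the feet of the adjacent strands partition the two ``sides'' of the boundary seen inside $\check{\mathsf{X}}$ (the $\mathsf{X}$-side and the funnel-side of the core geodesic, and analogously the two complementary horocyclic arcs in the cusp case), producing two independent tuples $(w_{\bvertex,j})$ and $(v_{\bvertex,j})$ each summing to $L_\bvertex/2$ once the horocycle around the origin is correctly normalised. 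Bijectivity of $\spinebijection$ is then established by constructing the inverse explicitly: given $\tree$ and $\varphi\in\polytope(\mathbf{L})$, assemble a hyperbolic piece for each inner vertex with the prescribed angles and a right-angled polygonal piece for each boundary vertex with the prescribed $(w,v)$-data, and glue these along their matching edges to obtain a unique $\check{\mathsf{X}}$; cutting along the unique closed core geodesic of each funnel recovers $\mathsf{X}\in\mathcal{M}^\circ_{0,1+n}(0,\mathbf{L})$.

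The hard part will be the measure identification $\spinebijection_*\operatorname{Vol}_\WP=\operatorname{Vol}_\tree$. My plan is to follow the strategy of \cite{budd2025random}: express the Weil--Petersson form on $\mathcal{M}_{0,1+n}(0,\mathbf{L})$ in Fenchel--Nielsen-type coordinates subordinate to a pair-of-pants decomposition dual to the spine, and then change variables to the spine parameters via a Mirzakhani-style symplectic reduction on each pair of pants. The wedge $\rmd\varphi(\vec{\edge}_{\vertex,1})\wedge\rmd\varphi(\vec{\edge}_{\vertex,2})$ at each inner vertex should emerge after eliminating the redundant third angle using $\sum_i\varphi(\vec{\edge}_{\vertex,i})=\pi$, while the Fenchel--Nielsen twist around each geodesic boundary should translate geometrically into the offset between the $w$- and $v$-partitions, explaining the product-of-two-simplices factor $(\Delta^{(L_\bvertex/2)}_{\deg(\bvertex)})^2$. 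The normalisation $2^{n-2}$ should fall out as the cumulative Jacobian of these elementary reductions, and its type-independence is consistent with the fact that $\dim\polytope(\mathbf{L})=2n-4$ is also type-independent. The main technical obstacle is to carry out this Jacobian computation uniformly across all combinatorial types $\tree\in\treeset_n(\mathbf{L})$, and to verify simultaneously that the degenerate locus removed from $\mathcal{M}_{0,1+n}(0,\mathbf{L})$ carries no Weil--Petersson mass, which together with the previous steps would complete the proof.
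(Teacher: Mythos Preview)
Your spine construction on the funnel extension $\check{\mathsf{X}}$ and the resulting tree structure are essentially on target, though the paper's tiling associates hyperbolic quadrilaterals to \emph{edges} and ideal wedges to \emph{non-ideal corners}, not pieces to vertices; the inverse $\mathsf{Glue}$ is built by assembling these and gluing along ribs. The condition $\sum_i\varphi(\vec{\edge}_{\vertex,i})=\pi$ is simply the statement that the angles of the congruent triangles meeting at $\vertex$ fill out $2\pi$ (each appearing twice), not a Gauss--Bonnet computation; and the Delaunay inequality $\varphi(\vec{\edge})+\varphi(\cev{\edge})<\pi$ is just the angle-sum bound in a single hyperbolic triangle. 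The $(w,v)$-data at a boundary vertex are not a partition of ``two sides'' of the core geodesic: both tuples live on the same core geodesic, the $w_{\bvertex,j}$ measuring the segments cut out by boundary ribs of the quadrilaterals and the $v_{\bvertex,j}$ those of the wedges, interleaved around $\bvertex$, which is why each sums separately to $L_\bvertex/2$.

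The genuine gap is in your measure identification. There is no natural pants decomposition ``dual to the spine'', and the paper explicitly remarks that Fenchel--Nielsen coordinates are ``not easily related to the tree decorations''; moreover the model you cite, \cite{budd2025random}, used Penner's lambda-lengths rather than FN coordinates, and those do not extend to geodesic boundaries. What the paper does instead is pass to Thurston--Fock \emph{shear coordinates} for the canonical ideal triangulation determined by the spine (one arc dual to each $\edge\in\edgeset(\tree)$ together with arcs spiralling into each boundary for the corners). In these coordinates the Weil--Petersson \emph{Poisson} bivector has the known local form $\pi_{\triangulation}=\tfrac12\sum_{\text{triangles}}(\partial_{z_\alpha}\wedge\partial_{z_\beta}+\cdots)$. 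The paper then computes the shears explicitly as functions of $(\varphi,w,v)$ and checks, by a short bracket calculation vertex-by-vertex, that the induced Poisson structure on $\mathcal{A}_\tree(\mathbf{L})$ is given by a \emph{constant} bivector $\pi_\tree$. The Liouville volume of a constant bivector is elementary linear algebra, and this is where the factor $2^{n-2}$ falls out uniformly in $\tree$. Your proposed route via symplectic reduction on pairs of pants would require a global change of variables from FN data to spine data whose Jacobian you could track across all combinatorial types, which is exactly the difficulty the shear--Poisson approach is designed to avoid.
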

In fact, we will see in Theorem~\ref{thm:fullbijection} that $\mathsf{Spine}$ makes sense a bijection for the full moduli space $\mathcal{M}_{0,1+n}(0,\mathbf{L})$ by considering an extended family of trees. But the polytopes associated to these extra trees are of dimension smaller than $2n-4$ and therefore do not contribute to the volume.

As a direct consequence of Theorem~\ref{thm:introbijection} we obtain a bijective interpretation, in the special case of $g=0$ with the first boundary a cusp, of the well-known fact \cite[Theorem~1.1]{Mirzakhani2007} due to Mirzakhani that $V_{g,n}(\mathbf{L})$ is a rational homogeneous polynomial of degree $3g-3+n$ in $\pi^2,L_1^2,\ldots,L_n^2$.

\begin{corollary}\label{cor:volpolynomial}
	For $n \geq 2$, the Weil--Petersson volume $V_{0,n+1}(0,\mathbf{L})$ is the rational homogeneous polynomial of degree $n-2$ in $\pi^2,L_1^2,\ldots,L_n^2$ given by the finite sum
	\begin{align}
		V_{0,n+1}(0,\mathbf{L}) = \sum_{\tree\in\treeset_n(\mathbf{L})} \operatorname{Vol}_\tree(\mathcal{A}_\tree(\mathbf{L})), \qquad
		\operatorname{Vol}_\tree(\mathcal{A}_\tree(\mathbf{L})) \in \pi^{2|\vsetinner(\tree)|} \cdot \prod_{\bvertex\in \vsetboundary(\tree)} L_\bvertex^{2\deg(\bvertex)-2} \cdot \mathbb{Q}. \label{eq:volpolytopesum}
	\end{align}
\end{corollary}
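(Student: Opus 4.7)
The plan is to deduce the identity directly from Theorem~\ref{thm:introbijection}, reducing the statement to an elementary volume computation on the polytopes $\mathcal{A}_\tree(\mathbf{L})$. Since $\mathsf{Spine}$ is bijective and pushes $\operatorname{Vol}_\WP$ forward to $\operatorname{Vol}_\tree$ on each piece, and since $\mathcal{M}^\circ_{0,1+n}(0,\mathbf{L})$ has full Weil-Petersson measure in $\mathcal{M}_{0,1+n}(0,\mathbf{L})$, the first equality in \eqref{eq:volpolytopesum} is immediate once one verifies that the sum on the right is finite. Finiteness I would establish by a handshake argument: since every inner vertex of $\tree \in \treeset_n(\mathbf{L})$ has degree $3$ and $\tree$ has $|\vsetinner(\tree)| + n - 1$ edges, one obtains $\sum_{\bvertex} \deg(\bvertex) = 2n - 2 - |\vsetinner(\tree)|$, and the constraint $\deg(\bvertex)\geq 1$ forces $|\vsetinner(\tree)| \leq n - 2$, so only finitely many trees meet the requirements.

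Next, I would compute each $\operatorname{Vol}_\tree(\mathcal{A}_\tree(\mathbf{L}))$ by factoring the polytope into its angular part, controlled by $\varphi$, and the product of pairs of boundary simplices. Each boundary simplex $\Delta_{\deg(\bvertex)}^{(L_\bvertex/2)}$ contributes Lebesgue volume $(L_\bvertex/2)^{\deg(\bvertex)-1}/(\deg(\bvertex)-1)!$ in the coordinates $(w_{\bvertex,j})$ respectively $(v_{\bvertex,j})$ prescribed by \eqref{eq:measurepolytope}, so the joint boundary contribution is the rational monomial
\begin{align*}
\prod_{\bvertex \in \vsetboundary(\tree)} \frac{(L_\bvertex/2)^{2\deg(\bvertex)-2}}{((\deg(\bvertex)-1)!)^{2}}.
\end{align*}
For the angular part I would substitute $\varphi = \pi \tilde\varphi$: every (in)equality in \eqref{eq:polytopedefgeneric} rescales to a condition on $\tilde\varphi$ independent of $\pi$, while the $2|\vsetinner(\tree)|$-dimensional measure $\prod_{\vertex} \rmd\varphi(\vec{\edge}_{\vertex,1})\rmd\varphi(\vec{\edge}_{\vertex,2})$ picks up a factor $\pi^{2|\vsetinner(\tree)|}$. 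What remains is the Lebesgue volume of a fixed bounded rational polytope in $\R^{\vec{\edgeset}(\tree)}$, hence a rational number.

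Combining both factors with the $2^{n-2}$ prefactor from \eqref{eq:measurepolytope} establishes the membership $\operatorname{Vol}_\tree(\mathcal{A}_\tree(\mathbf{L})) \in \pi^{2|\vsetinner(\tree)|} \cdot \prod_\bvertex L_\bvertex^{2\deg(\bvertex)-2} \cdot \mathbb{Q}$. Homogeneity of the overall polynomial of degree $n-2$ in $\pi^2,L_1^2,\ldots,L_n^2$ then follows from the identity $|\vsetinner(\tree)| + \sum_\bvertex (\deg(\bvertex)-1) = n-2$, a direct consequence of the same handshake count. The substantive content of the corollary — that $V_{0,n+1}(0,\mathbf{L})$ is polynomial at all — is already contained in Theorem~\ref{thm:introbijection}, so no serious obstacle is anticipated beyond bookkeeping. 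The only delicate point is to check that the $L_\bvertex = 0$ case is handled consistently: the definition of $\treeset_n(\mathbf{L})$ forces $\deg(\bvertex)=1$ there and the degenerate simplex $\Delta_1^{(0)}=\{0\}$ must be read as contributing volume $1$ to the boundary factor, so that no spurious vanishing occurs.
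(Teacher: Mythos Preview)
Your proposal is correct and follows essentially the same approach as the paper: factor the polytope into its angular part (a rational polytope rescaled by $\pi$) and the boundary simplices (rescaled by $L_\bvertex$), then read off the monomial degree via the tree identity $|\vsetinner(\tree)| + \sum_\bvertex(\deg(\bvertex)-1) = n-2$. You supply somewhat more detail than the paper --- the handshake bound for finiteness of $\treeset_n(\mathbf{L})$, the explicit simplex volumes, and the consistency check at $L_\bvertex=0$ --- but the underlying argument is the same.
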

\begin{proof}
	For each $\tree\in\treeset_n(\mathbf{L})$, the volume $\operatorname{Vol}_\tree(\mathcal{A}_\tree(\mathbf{L}))$ factorizes over the Cartesian product in \eqref{eq:polytopedefgeneric}.
	The angular part is a rational polytope of dimension $2|\vsetinner(\tree)|$ rescaled by $\pi$, while the simplex $\Delta^{(L_\bvertex/2)}_{\deg(\bvertex)}$ is a rational polytope of dimension $\deg(\bvertex)-1$ rescaled by $L_{\bvertex}$.
	Therefore $\operatorname{Vol}_\tree(\mathcal{A}_\tree(\mathbf{L}))$ is a rational monomial of degree $|\vsetinner(\tree)| + \sum_{\bvertex\in\vsetboundary(\tree)} (\deg(\bvertex)-1) = n-2$ in $\pi^2,L_1^2,\ldots,L_n^2$.
\end{proof}

Computing the volumes $\operatorname{Vol}_\tree(\mathcal{A}_\tree(\mathbf{L}))$ is not entirely straightforward, due to the \emph{Delaunay condition} $\varphi(\vec{\edge}) + \varphi(\cev{\edge}) < \pi$ for $\edge\in \edgeset(\tree)$.
Luckily it turns out that the computation simplifies drastically if one uses an inclusion-exclusion principle to reverse the inequality, meaning that one instead considers the \emph{anti-Delaunay condition} $\varphi(\vec{\edge}) + \varphi(\cev{\edge}) > \pi$ for some edges.
The result is that the contributions to the volume in \eqref{eq:volpolytopesum} can be repackaged in terms of an adapted collection of trees, that we call \emph{anti-Delaunay trees}. 
They form a superset $\widetilde{\treeset}_n(\mathbf{L}) \supset \treeset_n(\mathbf{L})$ of the bicolor trees introduced above, the only difference being that $\tree \in \widetilde{\treeset}_n(\mathbf{L})$ is allowed to have inner vertices of arbitrary degree $3$ or larger.

\begin{figure}[h!]
    \centering
    \includegraphics[width=\linewidth]{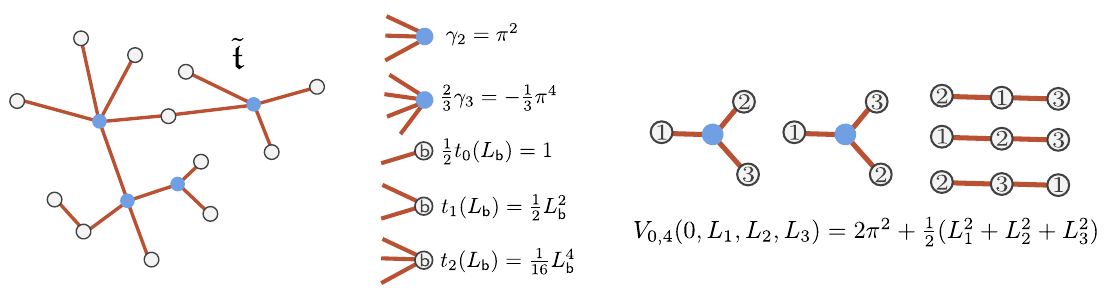}
    \caption{Left: Example of an anti-Delaunay tree $\tilde{\tree}\in \widetilde{\treeset}_{12}(\mathbf{L})$. Middle: The contribution to the Weil-Petersson volume for several low-degree vertices. Right: all five trees in $\widetilde{\treeset}_4(\mathbf{L})$. \label{fig:antitreeweights}}
\end{figure}

\begin{theorem}\label{thm:antidelaunay}
	For $n\geq 2$ and $\mathbf{L}\in\R_{\geq 0}^n$, the Weil-Petersson volume $V_{0,1+n}(0,\mathbf{L})$ can be expressed as a sum over anti-Delaunay trees as
	\begin{align}
		V_{0,1+n}(0,\mathbf{L})
		&=\sum_{\tree\in\widetilde{\treeset}_n(\mathbf{L})} \prod_{\vertex\in\vsetinner(\tree)} \frac{2^{\deg(\vertex)-2}}{(\deg(\vertex)-1)!}\,\gamma_{\deg(\vertex)-1}\prod_{\bvertex \in \vsetboundary(\tree)} \frac{2^{\deg(\bvertex)-2}}{(\deg(\bvertex)-1)!}\,t_{\deg(\bvertex)-1}(L_\bvertex),
	\end{align}
	where
	\begin{align}
		\gamma_k=\frac{(-1)^k\pi^{2k-2}}{(k-1)!}, \quad t_k(L)=\frac{2}{k!}\left(\frac{L}{2}\right)^{2k}.
	\end{align}
\end{theorem}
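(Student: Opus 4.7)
Starting from Corollary~\ref{cor:volpolynomial}, the plan is to convert the sum over $\tree\in\treeset_n(\mathbf{L})$ into a sum over anti-Delaunay trees via inclusion--exclusion on the Delaunay inequalities. First I would factorize $\operatorname{Vol}_\tree(\mathcal{A}_\tree(\mathbf{L}))$ according to the Cartesian product in~\eqref{eq:polytopedefgeneric}: the squared simplex at a boundary vertex $\bvertex$ of degree $d_\bvertex$ contributes $\bigl[(L_\bvertex/2)^{d_\bvertex-1}/(d_\bvertex-1)!\bigr]^2$, and the angular polytope at the inner vertices (cut out by the vertex sum relations together with the Delaunay inequalities) contributes the remaining factor. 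Using the identity $n-2 = \sum_{\vertex\in\vsetinner(\tree)}(\deg(\vertex)-2) + \sum_{\bvertex\in\vsetboundary(\tree)}(\deg(\bvertex)-1)$, valid for any tree in $\widetilde{\treeset}_n$, the prefactor $2^{n-2}$ from~\eqref{eq:measurepolytope} splits into $2^{d_\bvertex-1}$ per boundary vertex --- yielding exactly the claimed factor $\tfrac{2^{d_\bvertex-2}}{(d_\bvertex-1)!}\,t_{d_\bvertex-1}(L_\bvertex)$ --- together with $2^{d_\vertex-2}$ per inner vertex to be absorbed later.

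For the angular part, edges incident to a boundary vertex automatically satisfy Delaunay (one angle is $0$ and the other is at most $\pi$), so only internal edges are nontrivial. Expanding
\begin{align*}
  \prod_{\edge\text{ internal}} \ind_{\varphi(\vec{\edge})+\varphi(\cev{\edge})<\pi}
  = \prod_{\edge\text{ internal}} \bigl(1 - \ind_{\varphi(\vec{\edge})+\varphi(\cev{\edge})>\pi}\bigr)
\end{align*}
and integrating term by term produces a signed sum over subsets $S$ of internal edges, each summand being the volume of the polytope obtained by imposing the reversed (anti-Delaunay) inequality on $\edge\in S$ and dropping the Delaunay constraint elsewhere. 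I would then contract each $\edge\in S$: the connected components of $S$ inside $\tree$ collapse into single higher-degree inner vertices, producing a tree $\tilde\tree\in\widetilde{\treeset}_n(\mathbf{L})$. Conversely, every $\tilde\tree$ arises from a unique $(\tree,S)$ by specifying at each inner vertex of $\tilde\tree$ of degree $d$ a trivalent plane-tree refinement compatible with the cyclic edge order; there are $C_{d-2}$ such refinements (Catalan number), each contributing $d-3$ contracted edges and thus a sign $(-1)^{d-3}$.

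It remains to compute, for a trivalent plane tree $T$ with $d$ leaves, the volume of the polytope $P_T$ cut out by the $d-2$ vertex sum relations $\sum_{i=1}^3 \varphi(\vec{\edge}_{\vertex,i})=\pi$ and the $d-3$ anti-Delaunay inequalities on the internal edges. I would change variables to the $d$ leaf angles $a_1,\ldots,a_d$ and the defects $y_\edge \coloneqq \varphi(\vec{\edge})+\varphi(\cev{\edge})-\pi > 0$ on the internal edges. Summing the vertex relations over all $d-2$ inner vertices of $T$ and using that each internal edge $\edge$ contributes $\pi + y_\edge$, the relations aggregate into the single linear constraint $\sum_{i=1}^d a_i + \sum_\edge y_\edge = \pi$. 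An inductive argument obtained by cutting one internal edge of $T$ at a time expresses each individual edge-half angle $\varphi(\vec{\edge})$ as a non-negative linear combination of the $a_i$ and $y_\edge$ lying on one side of the cut, so positivity of the $(a,y)$ variables forces positivity of every edge angle; the associated change of variables from $\prod_\vertex \rmd\varphi(\vec{\edge}_{\vertex,1})\rmd\varphi(\vec{\edge}_{\vertex,2})$ to Lebesgue measure in $(a,y)$ has unit Jacobian. Consequently $P_T$ is affinely isomorphic to the open simplex $\Delta^{(\pi)}_{2d-3}$ and has volume $\pi^{2(d-2)}/(2(d-2))!$, independently of the combinatorial type of $T$.

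Assembling the pieces, the total contribution of each inner vertex of $\tilde\tree$ of degree $d$ becomes
\begin{align*}
  2^{d-2}\cdot C_{d-2}\cdot(-1)^{d-3}\cdot \frac{\pi^{2(d-2)}}{(2(d-2))!}
  = \frac{2^{d-2}\,(-1)^{d-1}\pi^{2(d-2)}}{(d-1)!(d-2)!}
  = \frac{2^{d-2}}{(d-1)!}\,\gamma_{d-1},
\end{align*}
using $C_{d-2}=\tfrac{(2(d-2))!}{(d-1)!(d-2)!}$ and $(-1)^{d-3}=(-1)^{d-1}$. Combined with the boundary factor already identified, this yields precisely the formula of Theorem~\ref{thm:antidelaunay}. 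The main obstacle I anticipate is the $(a,y)$-parametrization step: showing \emph{uniformly} in $T$ that this change of variables exhausts the anti-Delaunay polytope and is unimodular, since it is exactly this independence of $T$ that collapses the Catalan count $C_{d-2}$ into the clean factorial factor $1/[(d-1)!(d-2)!]$ via the simplex volume.
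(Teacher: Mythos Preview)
Your proposal is correct and follows essentially the same route as the paper: inclusion--exclusion on the Delaunay inequalities, contraction of the anti-Delaunay edge set to produce $\tilde\tree\in\widetilde{\treeset}_n(\mathbf{L})$, Catalan counting of trivalent refinements at each inner vertex, and reduction of the anti-Delaunay angular polytope to a simplex $\Delta_{2d-3}^{(\pi)}$ independently of the refinement. Your $(a,y)$-coordinates are in fact the same variables the paper uses (there called $\widetilde{\varphi}(\vec{\edge}_i)$, rooted at one leaf): the non-root leaf $\widetilde{\varphi}$'s are your $a_i$, the internal-edge $\widetilde{\varphi}$'s are your defects $y_\edge$, and the root $\widetilde{\varphi}$ is the remaining leaf angle, so the unimodularity and the bijectivity with the positivity region are exactly the paper's triangular change of variables.
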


See Figure~\ref{fig:antitreeweights} for an illustration of $V_{0,4}(0,L_1,L_2,L_3) = 2\pi^2 + \tfrac12(L_1^2+L_2^2+L_3^2)$.

Readers familiar with intersection numbers on the moduli space of curves and the KdV hierarchy, may recognize $t_k$ as the \emph{times} appearing in the KdV differential equations and the \emph{time shifts} $\gamma_k$ originating from expressing the cohomology class of the Weil-Petersson symplectic form in terms of $\psi$-classes \cite{Witten_Two_1991}.
See Mirzakhani's proof of Witten's conjecture \cite{Mirzakhani2007a} and \cite{Manin_Invertible_2000,Mulase_Mirzakhanis_2008,Liu_Recursion_2009}, or the overview in \cite[Section~4]{budd2020irreducible}.

\paragraph{Generating functions.}
In this context it is well-known that the genus-$0$ intersection numbers, and therefore also the genus-$0$ Weil-Petersson volumes, are completely determined by the \emph{string equation} \cite{Witten_Two_1991,Kontsevich1992}.
As a consequence of Theorem~\ref{thm:antidelaunay}, our bijection allows to reinterpret the string equation as the characteristic equation for the generating function of the combinatorial anti-Delaunay trees $\widetilde{\treeset}_n(\mathbf{L})$.

To make this precise, it is necessary to switch from working at fixed number of boundaries to generating functions of Weil-Petersson volumes.
Since the Weil-Petersson volumes depend on an $n$-tuple of boundary lengths, some care is required to formalize a notion of generating function.
A versatile definition was given by the first and third author in \cite[Section~1.4]{budd2023topological}.
Informally, the generating function $F_g[\mu]$ of genus-$g$ Weil-Petersson volumes is the formal power series
\begin{align}
	F_g[\mu] = \sum_{n\geq 0}\frac{1}{n!}\int_{\R_{\geq 0}^n} V_{g,n}(\mathbf{L}) \rmd\mu(L_1)\cdots \rmd\mu(L_n), \label{eq:partfun}
\end{align}
which has a functional dependence\footnote{Using physicist's notational conventions square brackets indicate functional dependence.} on a \emph{weight} $\mu$, which one may think of as a measure on $\R_{\geq 0}$ or, more generally, as a linear function on the ring of even, real polynomials (see Section~\ref{sec:stringeq} for details). 
For us the important generating function is that of double-cusped hyperbolic spheres
\begin{align}
	R[\mu]=\sum_{n\geq1}\frac{1}{n!}\int V_{0,2+n}(0,0,\mathbf{L})\, \rmd{\mu(L_1)}\dots\rmd{\mu(L_n)} = \int \rmd \mu(L_1) + O(\mu^2) \label{eq:Rseriesdef},
\end{align}
which can also be understood as a second derivative of $F_0[\mu]$, see also \cite{budd2020irreducible,budd2023topological}.
Theorem~\ref{thm:antidelaunay} translates into a bijective proof of the known string equation \cite[Equation~(21)]{budd2023topological} for $R[\mu]$.

\begin{corollary}[String equation]\label{cor:string}
	This generating function satisfies the \emph{string equation}
	\begin{align}
		R[\mu]&=\sum_{d\geq0}\frac{2^{d-1}}{d!}\left(t_{d}[\mu]+\ind_{\{d\geq2\}}\gamma_{d}\right)R[\mu]^{d},\quad t_k[\mu]\coloneqq \int t_k(L) \rmd\mu(L).
	\end{align}
	or equivalently $Z(R[\mu];\mu] = 0$, where
	\begin{align}
		Z(r;\mu] = \frac{\sqrt{r}}{\sqrt{2}\pi}J_1\left(2\pi\sqrt{2r}\right)-\int I_0\left(L\sqrt{2r}\right)\dd{\mu(L)}
	\end{align}
	is a formal power series in $r$ involving the Bessel functions $J_1$ and $I_0$.
\end{corollary}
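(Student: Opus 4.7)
The plan is to substitute the anti-Delaunay expansion of Theorem~\ref{thm:antidelaunay} into the definition \eqref{eq:Rseriesdef} of $R[\mu]$ and extract the functional equation via a plane-tree decomposition at the distinguished cusp. Applying Theorem~\ref{thm:antidelaunay} to $V_{0,1+(n+1)}(0,(0,L_1,\ldots,L_n))$ expresses $V_{0,2+n}(0,0,\mathbf{L})$ as a sum over anti-Delaunay trees $\tree \in \widetilde{\treeset}_{n+1}(0,\mathbf{L})$ in which the boundary vertex of length $0$, corresponding to the second distinguished cusp, is forced to be a degree-$1$ leaf.

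Next I would root each such tree at that distinguished leaf and stratify by its unique neighbor $v$, of some degree $d\geq 1$. The tree splits canonically into $v$ (with its per-vertex $\gamma$- or $t$-weight) and an ordered list of $d-1$ subtrees $T_1,\ldots,T_{d-1}$ hanging off $v$ at its remaining edges, linearly ordered by the cyclic order around $v$ starting from the root edge. The key bijective observation is that attaching a fresh degree-$1$ boundary vertex along the root edge of each $T_i$ gives a bijection between such subtrees and the full class of rooted anti-Delaunay trees counted by $R[\mu]$, preserving per-vertex weights because the fresh cusp contributes $\frac{1}{2}t_0(0)=1$. Substituting into \eqref{eq:Rseriesdef}, the multinomial distribution of the $n$ integrated length labels among the subtrees (and among $v$ itself if $v$ is a boundary vertex) converts the global $1/n!$ into $\prod_i 1/n_i!$ and factorizes the $\mu$-integration, so that each subtree contributes an independent factor of $R[\mu]$. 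This yields
\begin{align*}
R[\mu] = \sum_{d\geq 3}\frac{2^{d-2}}{(d-1)!}\gamma_{d-1}\,R[\mu]^{d-1} + \sum_{d\geq 1}\frac{2^{d-2}}{(d-1)!}\,t_{d-1}[\mu]\,R[\mu]^{d-1},
\end{align*}
which after reindexing $d\mapsto d+1$ is exactly the asserted string equation, the constraint $d\geq 3$ for inner $v$ becoming the indicator $\mathbf{1}_{d\geq 2}$ on $\gamma_d$.

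For the equivalent Bessel form $Z(R[\mu];\mu]=0$, I would expand $J_1$ and $I_0$ as power series and verify directly that $\int I_0(L\sqrt{2r})\,\rmd\mu(L)=\sum_{k\geq 0}\frac{2^{k-1}}{k!}t_k[\mu]\,r^k$ and $r-\frac{\sqrt r}{\sqrt 2\,\pi}J_1(2\pi\sqrt{2r})=\sum_{k\geq 2}\frac{2^{k-1}}{k!}\gamma_k\,r^k$, so that $Z(R[\mu];\mu]=0$ is a term-by-term rewriting of the string equation. The main obstacle I anticipate is the plane-tree bookkeeping of the second step: checking that the fictitious-cusp attachment is a genuine bijection compatible with the cyclic order at the detached vertex, that the boundary labels redistribute with the correct multinomial factor, and that the degenerate cases (the two-vertex tree when $v$ is a degree-$1$ boundary, and subtrees $T_i$ reduced to a single vertex) are handled cleanly.
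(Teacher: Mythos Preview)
Your proposal is correct and follows essentially the same approach as the paper: root the anti-Delaunay tree at the distinguished cusp leaf, decompose at the adjacent vertex, and recognize each of the remaining subtrees as contributing a factor of $R[\mu]$; the Bessel reformulation is likewise obtained in the paper by direct series expansion. The only cosmetic difference is that the paper first passes to unlabeled rooted trees (absorbing the $1/n!$ and $\mu$-integration into the per-vertex weight $t_{\deg(\bvertex)-1}[\mu]$), whereas you keep the labels and handle the multinomial redistribution explicitly---these are equivalent bookkeeping choices.
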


As explained in detail in \cite{budd2023topological}, the generating function $R[\mu]$ plays a central role in the topological recursion of Weil-Petersson volumes of hyperbolic surfaces with weights on the boundaries.
For an introduction to topological recursion \cite{Eynard_Invariants_2007} we refer the reader to \cite{Eynard_short_2014,eynard2016counting,Borot_Topological_2020}.
To be precise, we considered the formal power series defined in terms of $R[\mu]$ and $Z(r;\mu]$ via 
\begin{align}
	\eta(u;\mu] = \sum_{p=0}^\infty \frac{u^{2p}}{(2p+1)!!} \frac{\partial^{p+1} Z}{\partial r^{p+1}}(R[\mu];\mu] = \frac{\sin(2\pi u)}{2\pi u} + \int \frac{\cos(2\pi u)-\cosh(L_1u)}{u^2}\rmd\mu(L_1) + O(\mu^2)\label{eq:etadef}
\end{align}
Then the invariants $\omega_{g,n}(\mathbf{u})$ of the spectral curve
\begin{align}\label{eq:spectralcurve}
	\begin{cases}
		x(u) = u^2,\\ y(u) = 2u\, \eta(u;\mu],
	\end{cases}
\end{align}
in the sense of \cite{Eynard_Invariants_2007}, were shown to be related to generating functions of Weil-Petersson volumes of genus-$g$ surfaces with $n$ distinguished boundaries and an arbitrary number of extra weighted boundaries.  
To be precise, according to \cite[Lemma~20]{budd2023topological} we have the formal power series identity
\begin{align}
	\sum_{p=0}^\infty \frac{1}{p!} \int_{\R_{>0}^{n+p}} V_{g,n+p}(\mathbf{L},\mathbf{K}) \prod_{i=1}^n L_i e^{-z_i L_i}\rmd L_i  \prod_{j=1}^p  \rmd \mu(K_j) = \omega_{g,n}(\mathbf{u}) \prod_{i=1}^n \frac{z_i}{u_i}, \qquad u_i \coloneqq \sqrt{z_i^2 - 2R[\mu]}.
\end{align}
Upon setting the weight $\mu$ to zero, one finds $R[0]=0$ and $\eta(u;0] = \sin(2\pi u)/(2\pi u)$, this reduces to the standard Mirzakhani recursion of Weil-Petersson volumes \cite{eynard2007weil,Mirzakhani2007}. 

\paragraph{Metric space statistics.}
Paralleling a similar phenomenon in planar map combinatorics \cite{Bouttier_Bijective_2022,Bouttier_Enumeration_2024}, it was shown in \cite[Theorem~2]{budd2023topological} that this generalization of Mirzakhani's recursion is most naturally phrased in the setting of hyperbolic surfaces with certain ``tight'' boundaries.
Based on an, as of yet heuristic, geometric interpretation via a ``tight'' version of the pants decomposition underlying the result of Mirzakhani, it was conjectured in \cite[Section~1.6]{budd2023topological} that the spectral curve hides precise metric statistics (see also Section~\ref{sec:discussion}).
To be precise, for $n\geq 0$, $\mathbf{L} \in \R_{\geq 0}^n$, we let $\mathsf{X} \in \mathcal{M}_{0,3+n}(0,0,0,\mathbf{L})$ be a triply-cusped hyperbolic sphere and consider the unique unit-length horocycles $h_1,h_2,h_3$ around these cusps.
Denoting the hyperbolic metric on $\mathsf{X}$ by $d_{\mathsf{X}} : \mathsf{X}\times \mathsf{X}\to \R_{\geq 0}$, we let the \emph{distance difference} be $D(\mathsf{X}) = d_{\mathsf{X}}(h_1,h_2) - d_{\mathsf{X}}(h_1,h_3)$, see Figure~\ref{fig:threepoint}.

\begin{figure}[h!]
    \centering
    \includegraphics[width=.6\linewidth]{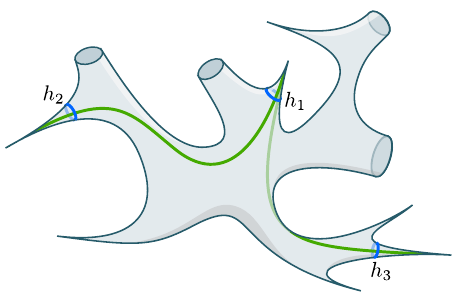}
    \caption{A hyperbolic sphere $\mathsf{X}$ with three distinguished cusps equipped with their unit-length horocycles $h_1,h_2,h_3$. The distance difference $D(\mathsf{X})$ measures the length difference between the two length-minimizing geodesics.\label{fig:threepoint}}
\end{figure}

This defines a continuous mapping
\begin{align}
	D : \mathcal{M}_{0,3+n}(0,0,0,\mathbf{L}) \to \R.
\end{align}
We may thus consider the push-forward $D_*\operatorname{Vol}_\WP$ of the Weil-Petersson measure to $\R$, which for $n > 0$ is absolutely continuous with respect to the Lebesgue measure on $\R$ with a density that we denote $X_n(x;\mathbf{L}) \rmd x$.
We let $\hat{X}_n(u;\mathbf{L})$ be its two-sided Laplace transform,
\begin{align}
	\hat{X}_n(u;\mathbf{L}) \coloneqq \int_{-\infty}^\infty e^{2u x} X_n(x;\mathbf{L}) \rmd x = \int_{\mathcal{M}_{0,3+n}(0,0,0,\mathbf{L})} e^{2u D(\mathsf{X})} \rmd\operatorname{Vol}_\WP(\mathsf{X}).\label{eq:Xhatn}
\end{align}
It was predicted in \cite[Section~1.6]{budd2023topological} that the generating function of $\hat{X}_n(u;\mathbf{L})$, which we call the \emph{distance-dependent three-point function}, is directly related to the spectral curve \eqref{eq:spectralcurve}.
With the help of the tree bijection in this paper we can settle this conjecture.

\begin{theorem}[distance-dependent three-point function]\label{thm:dist}
	The formal power series
	\begin{align}
		\hat{X}(u;\mu] = \sum_{n \geq 0} \frac{1}{n!} \int \hat{X}_n(u;\mathbf{L}) \rmd \mu(L_1)\cdots \rmd \mu(L_n).
	\end{align}
	is related to \eqref{eq:etadef} via
	\begin{align}
		\hat{X}(u;\mu] = \frac{\sin(2\pi u)}{2\pi u\, \eta(u;\mu]}.\label{eq:Xhatdef}
	\end{align}
\end{theorem}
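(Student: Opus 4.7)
My strategy is to apply the tree bijection of Theorem~\ref{thm:introbijection} to $\mathcal{M}_{0,3+n}(0,0,0,\mathbf{L})$, taking the first cusp as the origin. This parametrises almost the entire moduli space by decorated trees $\tree \in \treeset_{n+2}(0,0,\mathbf{L})$, in which the two remaining distinguished cusps appear as degree-$1$ boundary vertices $\bvertex_2, \bvertex_3$. The Laplace transform in \eqref{eq:Xhatn} splits accordingly as a sum over $\tree$ of the integral of $e^{2uD(\mathsf{X})}$ against $\operatorname{Vol}_\tree$, and assembling this sum over $n$ and against $\mu$ realises $\hat{X}(u;\mu]$ as a generating function over trees with two marked leaves.

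The essential geometric input is an explicit formula for the horocycle-to-horocycle distance $d_\mathsf{X}(h_1, h_i)$, $i\in\{2,3\}$, in terms of the polytope coordinates on $\mathcal{A}_\tree(0,0,\mathbf{L})$. Building on the Bowditch-Epstein-Penner spine construction in the extended surface $\check{\mathsf{X}}$ and on Penner's decorated Teichmüller calculus, I expect $d_\mathsf{X}(h_1,h_i)$ to decompose as a sum of elementary contributions, each depending on the angle data $\varphi$ along the unique tree-path from $\bvertex_i$ to the outer face representing the origin. Consequently, the distance difference $D(\mathsf{X})$ should reduce to an oriented sum along the unique path $\bvertex_2 \leadsto \bvertex_3$ in $\tree$, with the contributions from the branches attached to this path cancelling out. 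Establishing this identity rigorously is the main obstacle of the proof, and I would rely on the horocyclic computations underpinning the spine analysis of \cite{budd2025random}, suitably extended to the boundary setting developed earlier in this paper.

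Given such a formula, $e^{2uD(\mathsf{X})}$ factorises edge-by-edge along the $\bvertex_2 \leadsto \bvertex_3$ path, while the integrals over the subtrees hanging off this path remain unchanged. Applying the anti-Delaunay reorganisation of Theorem~\ref{thm:antidelaunay} to each such subtree and summing over $n$ replaces every subtree insertion by a factor of $R[\mu]$, precisely as in the derivation of the string equation (Corollary~\ref{cor:string}). What remains is a one-dimensional transfer-matrix sum along the path: the endpoints produce the prefactor $\sin(2\pi u)/(2\pi u)$ (compatible with the $n=0$ base case $\hat{X}_0(u)=1$), while each interior transition carries a $u$-deformed weight which, I expect, repackages exactly as the power series defining $\eta(u;\mu]$ in \eqref{eq:etadef}. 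Summing the resulting geometric-type series then yields $\hat{X}(u;\mu] = \sin(2\pi u)/(2\pi u\,\eta(u;\mu])$. As a consistency check, the $u^{2p}$ coefficient of the interior transition weight must reproduce $(2p+1)!!^{-1}\,\partial_r^{p+1} Z(R[\mu];\mu]$, which I would verify by matching Taylor expansions in both $u$ and $\mu$ to low order.
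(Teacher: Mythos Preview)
Your high-level strategy matches the paper's: tree bijection, express $D(\mathsf{X})$ as a telescoping sum along the unique path between the two distinguished cusp-leaves, factor $e^{2uD(\mathsf{X})}$ vertex-by-vertex along this path, apply the anti-Delaunay reorganisation so that the dangling subtrees contribute powers of $R[\mu]$, and sum a geometric series in the path length. So the skeleton is right.

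However, the proposal has real gaps at the points where the paper does the actual work. First, your claim that ``the endpoints produce the prefactor $\sin(2\pi u)/(2\pi u)$'' is incorrect: the two cusp endpoints contribute nothing, and the path of length $m=0$ gives $1$. The factor $\sin(2\pi u)/(2\pi u)$ arises because the per-vertex transition weight is precisely $1-\tfrac{2\pi u}{\sin(2\pi u)}\eta(u;\mu]$, so the geometric series yields the result directly. The appearance of $\tfrac{2\pi u}{\sin(2\pi u)}$ is not combinatorial bookkeeping but an analytic miracle that occurs twice, for different reasons: at boundary vertices it comes from the reflection formula $\Gamma(1+2u)\Gamma(1-2u)=\tfrac{2\pi u}{\sin 2\pi u}$ after recognising two Beta integrals in the $w$-variables; at inner (anti-Delaunay) vertices it comes from a contour-integration identity for integrals of $\prod_i(\sin\alpha_i/\sin\beta_i)^{2u}$ over interlaced angle configurations (the paper's Lemma~\ref{lem:integral_ident1}).

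Second, you do not identify the concrete integrals that must be evaluated. At an inner vertex the difference is $d_{\edge_k}-d_{\edge_{k+1}}=\log\sin\varphi(\vec\edge_{\vertex,1})-\log\sin\varphi(\vec\edge_{\vertex,2})$, and after inclusion--exclusion one must integrate $\prod_i(\sin\alpha_i/\sin\beta_i)^{2u}$ over anti-Delaunay angle simplices and then resum over binary subtrees; this is nontrivial (Lemmas~\ref{lem:integral_ident1}--\ref{lem:integral_ident2}). At a boundary vertex of degree $k$ the difference involves the $(w_j,v_j)$ variables through $e^{d_{j+1}-d_j}=e^{v_j}(1-e^{-w_{j+1}})/(e^{w_j}-1)$, and the resulting simplex integrals require a hypergeometric summation (Lemma~\ref{lem:hypident}) to match the Bessel expansion of $\partial_r^{p+1}I_0(L\sqrt{2r})$. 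None of this is visible in your outline.

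Finally, ``matching Taylor expansions to low order'' is a sanity check, not a proof; the content of the theorem is an exact identity at all orders, and establishing it requires the closed-form evaluations above.
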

As far as we are aware, this constitutes the first exact global metric space statistic for Weil-Petersson random surfaces of fixed genus and fixed number of boundaries.
Many works deal with statistics of lengths of closed geodesics on such surfaces \cite{Mirzakhani_Growth_2013,Guth_Pants_2011}, but such geodesics are not guaranteed to be geodesics in the metric space sense of being (concatenations of a few) length-minimizing curves.
An exception\footnote{A pair of points positioned oppositely on a systole of length $L$ are necessarily at hyperbolic distance $L/2$, contrary to the case of a typical closed geodesic.} is the length of the \emph{systole}, i.e.\ the shortest closed geodesic on the surface, but exact statistics are only known in large-$g$ and/or large-$n$ limits \cite{mirzakhani2017lengths,Hide_Short_2025}. 
The Gromov-Hausdorff convergence of random genus-$0$ hyperbolic surfaces with cusps to the Brownian sphere, shown by Curien and the first author in \cite{budd2025random} (see below), allows to import a host of known metric space statistics from the latter, but again only in the large-$n$ limit.
Finally, we note that some metric space statistics arising from lengths of ``tight'' closed geodesics can be extracted from the already mentioned work \cite{budd2023topological}, see also \cite{Budd_tight_2025}.

To illustrate the non-trivial content of the identity \eqref{eq:Xhatdef}, let us examine the case $n=0,1$. 
With the help of \eqref{eq:Rseriesdef} and \eqref{eq:etadef}, we see that to first order in $\mu$ this power series is given by 
\begin{align}
	\hat{X}(u;\mu] = 1 + \int \frac{2\pi}{u} \frac{\cosh(L_1 u)-\cos(2\pi u)}{\sin(2\pi u)}\rmd\mu(L_1) + \cdots.
\end{align} 
The constant term $\hat{X}_0(u) = 1$ for $n=0$ reflects that the triply-punctured sphere deterministically satisfies $d_{\mathrm{hyp}}(h_1,h_2) = d_{\mathrm{hyp}}(h_1,h_3)$.
For $n=1$, one can perform the inverse Laplace transform of the integrand to find the explicit distribution for the distance difference on a hyperbolic surface with three cusps and a single boundary of length $L_1 \geq 0$,
\begin{align*}
	X_1(x;L_1) = 2\log\left(\frac{\cosh(x)+\cosh(L_1/2)}{\cosh(x)-1}\right).
\end{align*}
Obtaining a similar closed form $X_2(x;L_1,L_2)$ already appears to be challenging.

Theorem~\ref{thm:dist} can also be used to derive asymptotics as $n\to\infty$.
Let us demonstrate this for the special case of the random punctured sphere $\mathcal{S}_n \in \mathcal{M}_{0,1+n}(\mathbf{0})$ sampled with probability measure $\operatorname{Vol}_\WP / V_{0,n+1}(\mathbf{0})$, considered in \cite{Hide_Short_2025,budd2025random}.
As a consequence of Theorem~\ref{thm:dist} we prove the following.

\begin{corollary}\label{cor:distvar}
	If $\mathcal{S}_n$ is the Weil-Petersson random $(n+1)$-cusped sphere, then the variance of the distance difference satisfies the asymptotics
	\begin{align}
		\lim_{n\to \infty}\Var\left(\frac{D(\mathcal{S}_n)}{c_\WP \,n^{1/4}}\right) = \sqrt{\frac{\pi}{8}}, \qquad c_\WP = \frac{2\pi}{\sqrt{3c_0}} = 2.3392\ldots,
	\end{align}
	where $c_0$ is the first positive zero of the Bessel function $J_0$.
\end{corollary}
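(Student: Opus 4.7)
The plan is to reduce this variance to a coefficient extraction on the generating function of Theorem~\ref{thm:dist} and then to perform a singularity analysis in the $s$-plane.

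The involution exchanging the labels of cusps $2$ and $3$ preserves $\operatorname{Vol}_\WP$ and flips the sign of $D$, so $\E[D(\mathcal{S}_n)] = 0$ and the variance equals the second moment. Identifying $\mathcal{M}_{0,3+(n-2)}(0,0,0,\mathbf{0}) = \mathcal{M}_{0,n+1}(\mathbf{0})$, the definition \eqref{eq:Xhatn} yields
\begin{align*}
	\Var(D(\mathcal{S}_n)) = \frac{1}{4}\,\frac{\partial_u^2 \hat{X}_{n-2}(u;\mathbf{0})\big|_{u=0}}{\hat{X}_{n-2}(0;\mathbf{0})}.
\end{align*}
Specialising $\mu = s\,\delta_0$ in the generating function $\hat{X}(u;\mu]$ turns it into an ordinary power series in $s$, and this ratio coincides with the ratio of the $[s^{n-2}]$ coefficients of $B(s) := \partial_u^2 \hat{X}(u;s\delta_0]\big|_{u=0}$ and $A(s) := \hat{X}(0;s\delta_0]$.

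Next I would make $A$ and $B$ explicit via Corollary~\ref{cor:string}. With $\mu = s\delta_0$ the string equation reduces to the algebraic relation $xJ_1(x) = 4\pi^2 s$ in the variable $x := 2\pi\sqrt{2R[s\delta_0]}$. Using $\partial_r\bigl[\sqrt{r}\,J_1(2\pi\sqrt{2r})/(\sqrt{2}\pi)\bigr] = J_0(2\pi\sqrt{2r})$ and its derivative, the definition \eqref{eq:etadef} to order $u^2$ gives $\eta(0;s\delta_0] = J_0(x)$ and $\partial_u^2\eta(u;s\delta_0]\big|_{u=0}/2 = -4\pi^2 J_1(x)/(3x)$. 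Substituting into \eqref{eq:Xhatdef} and expanding $\sin(2\pi u)/(2\pi u) = 1 - 2\pi^2 u^2/3 + O(u^4)$ produces the parametric closed forms
\begin{align*}
	A(s) = \frac{1}{J_0(x)}, \qquad B(s) = -\frac{4\pi^2}{3J_0(x)} + \frac{8\pi^2 J_1(x)}{3x\,J_0(x)^2}.
\end{align*}

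For the singularity analysis, setting $\phi(x) := xJ_1(x)$ one has $\phi'(x) = xJ_0(x)$, so the first turning point of $s\mapsto x$ is at $x = c_0$ with critical value $s_c = c_0 J_1(c_0)/(4\pi^2)$ and $\phi''(c_0) = -c_0 J_1(c_0)$. Inversion around this simple critical point yields $x - c_0 \sim -\sqrt{8\pi^2(s_c - s)/(c_0 J_1(c_0))}$ as $s\uparrow s_c$, and since $J_0'(c_0) = -J_1(c_0)$ the leading singular behaviours are $A(s) \sim \sqrt{c_0/(8\pi^2 J_1(c_0)(s_c - s))}$ and $B(s) \sim 1/(3(s_c-s))$. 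The Flajolet--Odlyzko transfer theorems then give $[s^{n-2}]A(s) \sim \sqrt{c_0/(8\pi^2 J_1(c_0))}\,s_c^{-n+3/2}/\sqrt{\pi(n-2)}$ and $[s^{n-2}]B(s) \sim s_c^{-n+1}/3$. Taking the ratio, using $s_c^{-1/2} = 2\pi/\sqrt{c_0 J_1(c_0)}$ and dividing by~$4$, yields
\begin{align*}
	\Var(D(\mathcal{S}_n)) \sim \frac{\pi^2\sqrt{2\pi n}}{3c_0} = c_\WP^2\,\sqrt{\tfrac{\pi n}{8}},
\end{align*}
which is equivalent to the stated limit.

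The main obstacle is not algebraic but analytic: one must verify that $A$ and $B$ admit analytic continuations to a suitable $\Delta$-domain at $s_c$, so that the Flajolet--Odlyzko transfer applies. Pringsheim's theorem places the dominant singularity of $A$ on the positive real axis, and $s_c$ is the smallest positive $s$ at which $J_0(x(s))$ vanishes along the principal branch of $x(s)$; ruling out competing singularities of the same modulus reduces to checking that $|c_k J_1(c_k)|$ is strictly increasing in the order $k$ of the zeros of $J_0$, a standard fact about Bessel functions.
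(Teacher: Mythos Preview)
Your argument is correct and follows the same route as the paper's own proof: specialise $\mu$ to a Dirac mass at $0$, expand $\hat{X}(u;\mu]$ from Theorem~\ref{thm:dist} to second order in $u$, and extract coefficient asymptotics by singularity analysis at the square-root branch point coming from $J_0(2\pi\sqrt{2R})=0$. The paper packages your $J_0(x)$ and $-4\pi^2 J_1(x)/x$ as $M_0$ and $M_1$ and outsources the $\Delta$-analyticity to \cite[Lemma~6]{budd2025random}, whereas you compute the local expansion at $x=c_0$ by hand and sketch why $s_c$ is the unique dominant singularity via the monotonicity of $|c_kJ_1(c_k)|$; the algebra and the final asymptotic $\Var D(\mathcal{S}_n)\sim \pi^2\sqrt{2\pi n}/(3c_0)$ are identical. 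Your use of the index $n-2$ rather than $n$ is in fact the more careful bookkeeping, though it is immaterial in the limit.
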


According to \cite[Theorem~2]{budd2025random} the random surface $\mathcal{S}_n$ equipped with the normalized hyperbolic metric $n^{-1/4} d_{\mathcal{S}_n}(\,\cdot\,,\,\cdot\,)$ converges to a constant multiple of the Brownian sphere \cite{Marckert_Limit_2006,LeGall_Uniqueness_2013,Miermont_Brownian_2013}.
In fact, Corollary~\ref{cor:distvar} follows directly from \cite[Proposition~40]{budd2025random} once we know that $\sqrt{\pi/8}$ is the variance of the distance difference for a triple of uniform points in the standard Brownian sphere.
The latter is easily verified: if $(\mathbf{e},Z)$ is the Brownian snake (see e.g.\ \cite{LeGall_Uniqueness_2013}), then this variance is equal to $\mathbb{E}[(Z_a-Z_b)^2]=\mathbb{E}[d_{\mathbf{e}}(a,b)]$ for two uniform points $a$ and $b$, where $d_{\mathbf{e}}(\cdot,\cdot)$ is the pseudometric of the continuum random tree encoded by the Brownian excursion $\mathbf{e} : [0,1] \to \R_{\geq 0}$.
Hence, $\mathbb{E}[d_{\mathbf{e}}(a,b)] = \sqrt{\pi/8}$ is equal to the expected area under the Brownian excursion $\mathbf{e}$, see for instance \cite[Section~2]{Janson_Brownian_2007} and the references there for the value $\sqrt{\pi/8}$.
The virtue of our new proof of Corollary~\ref{cor:distvar} is that it provides an alternative, concise derivation\footnote{Compare to \cite[Section~6.6]{budd2025random}, where the main technical challenge in computing $c_\WP$ came down to establishing the asymptotic variance of a particular Markov chain on $[0,\pi)$. Of course, the most technical part of our derivation is hidden in the proof of Theorem~\ref{thm:dist}.} of the scaling constant $c_\WP = \frac{2\pi}{\sqrt{3c_0}}$.

\begin{proof}[Proof of Corollary~\ref{cor:distvar}]
	Taking the weight to be $\mu = x \delta_0$, with $\delta_0$ the delta measure at zero, turns $\hat{X}(u;\mu]$ into an exponential generating function of $\hat{X}_n(u;\mathbf{0})$ with variable $x$.
	Theorem~\ref{thm:dist} implies that the variance of the distance difference between a uniform triple of unit-length horocycles is given by
\begin{align}
	\Var D(\mathcal{S}_n) = \frac{\hat{X}_n''(0;\mathbf{0})}{4\hat{X}_n(0;\mathbf{0})} = \frac{[x^n]\hat{X}''(0;x\delta_0]}{4 [x^n]\hat{X}(0;x\delta_0]}.
\end{align}
It follows from \eqref{eq:Xhatdef} and \eqref{eq:etadef} that
\begin{align}
	\hat{X}(0;x\delta_0] = \frac{1}{M_0(x)},\qquad \hat{X}''(0;x\delta_0] = - \frac{2 M_1(x)}{3 M_0(x)^2} - \frac{4\pi^2}{3} \hat{X}(0;x\delta_0],\qquad M_p(x)\coloneqq \frac{\partial^{p+1} Z}{\partial r^{p+1}}(R[x\delta_0];0].   
\end{align}
By \cite[Proposition~4.3]{Budd_tight_2025}, see also \cite[Lemma~6]{budd2025random}, the series $M_p(x)$ have the same radius of convergence $x_c > 0$ and
\begin{align*}
	M_1(x) \sim -\left(\frac{4\pi^2}{c_0}\right)^2 x_c, \qquad \frac{M_0(x)^2}{2M_1(x)} \sim x-x_c\qquad \text{as }x\to x_c. 
\end{align*}
Hence 
\begin{align*}
	\hat{X}(0;x\delta_0] \sim \frac{c_0}{4\pi^2 \sqrt{2x_c}} \frac{1}{\sqrt{x_c-x}}, \quad \hat{X}''(0;x\delta_0] \sim \frac{1}{3}\frac{1}{x_c-x}\qquad \text{as }x\to x_c. 
\end{align*}
By standard transfer theorems and the $\Delta$-analyticity of $M_0(x)$ and $M_1(x)$ (see \cite[Lemma~6]{budd2025random}), we obtain the asymptotic estimate
\begin{align}
	\Var D(\mathcal{S}_n) = \frac{[x^n]\hat{X}''(0;x\delta_0]}{4 [x^n]\hat{X}(0;x\delta_0]} &\stackrel{n\to\infty}{\sim} \frac{\frac{1}{3}x_c^{-n-1}}{4\frac{c_0}{4\pi^2 \Gamma(1/2)\sqrt{2n}}x_c^{-n-1}} = \frac{4\pi^2}{3 c_0} \sqrt{\frac{\pi n}{8}},
\end{align}
from which the claim follows by rearranging.
\end{proof}

\subsection{Discussion}\label{sec:discussion}

\paragraph{Relation with planar maps.} Like in the previous works \cite{budd2020irreducible,budd2023topological,budd2025random}, the presented results are largely inspired by the combinatorics of maps (also known as ribbon graphs). 
Here we treat the relation between Weil-Petersson volumes of hyperbolic surfaces and enumeration of maps mostly as an analogy, interpreting the Weil-Petersson measure as a continuous version of the counting measure of maps.
Nonetheless, we point out that there is more to it than an analogy, as  already exemplified in the early nineties by Kontsevich's proof of Witten's conjecture \cite{Witten_Two_1991,Kontsevich1992}, which involved a precise relation between the combinatorics of (metric) maps and intersection numbers on the moduli spaces.
From the point of view of the Weil-Petersson measure this relation can be understood via a limit in which the boundary lengths of the hyperbolic surfaces become very large, such that the geometry of the surface literally starts resembling a metric map \cite{Do_asymptotic_2010,Andersen_Kontsevich_2020,Talbott_Critical_2025}. 
In another direction, the moduli space of genus-$0$ hyperbolic surfaces with cusps is known to be bijectively related to irreducible metric maps due to work of Rivin \cite{Rivin1992,Rivin1996}, see \cite{Charbonnier2017} and the discussion in \cite{budd2020irreducible}.

\begin{figure}[h!]
	\centering
	\includegraphics[width=.9\linewidth]{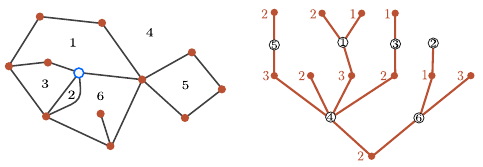}
	\caption{The Bouttier-Di~Francesco-Guitter bijection relates bipartite planar maps with a distinguished vertex (shown here in blue), $n$ labeled faces, which are properly bicolored plane trees with $n$ labeled white vertices and red vertices that are decorated by the graph distances.\label{fig:bdfg}
	}
\end{figure}

It was argued in \cite{budd2025random} that the spine construction for punctured hyberbolic spheres is analogous to the classical Cori-Vauquelin-Schaeffer bijection \cite{Cori_Planar_1981,Schaeffer_Conjugaison_1998} for planar quadrangulations, i.e.\ planar maps with all faces of degree $4$.
One may view the extension presented in this work to the case of hyperbolic spheres with geodesic boundaries as an analogue of the more general Bouttier-Di~Francesco-Guitter (BDiFG) bijection \cite{Bouttier_Planar_2004} for planar maps with faces of arbitrary degree (see also \cite{Bouttier_Planar_2012,Bouttier_Planar_2019} for the convenient dual perspective of slice decomposition).
Here the prescribed lengths of the geodesic boundaries replace the prescribed degrees of the faces of the map.

To appreciate the close analogy, let us phrase the BDiFG bijection in the case of bipartite planar maps in a fashion that resembles our context.
On one side we have the set $M_{n}(\mathbf{d})$ of bipartite planar maps with $n$ labeled faces of half-degrees\footnote{Faces of a bipartite map necessarily half even degree, so it is convenient to record half of the degree.} $\mathbf{d} = (d_1,\ldots,d_n) \in \N^{n}$ and a distinguished vertex, called the origin.
They correspond bijectively to \emph{mobiles}, which are (red/white) bipartite plane trees with $n$ labeled white vertices of degree $d_1, \ldots, d_n$ and with red vertices that are decorated by a positive integer.
These red vertices correspond precisely to the vertices of the original map, except for the origin, and the decoration records the graph distance to the origin.
The only restrictions on the decoration, stemming from their origin as graph distances, is that $1$ appears at least once and that the integer label of two consecutive red neighbours of a white vertex at most increases by $1$ in counterclockwise direction.
See Figure~\ref{fig:bdfg} for an example, noting the similarity to Figure~\ref{fig:bijectiontree}.

The analogue of the double-cusped generating function \eqref{eq:Rseriesdef} is the multivariate generating function  
\begin{align}
	R(g_1,g_2,\ldots) = \sum_{n\geq 1} \frac{1}{n!} \ \sum_{d_1,\ldots,d_n \geq 1} |M_{1+n}(1,\mathbf{d})| \,g_{d_1}\cdots g_{d_n}
\end{align}
for bipartite planar maps with a distinguished vertex and a distinguished face of degree $2$, which can also be interpreted as a distinguished edge inflated to a bigon.
They are also known as elementary slices in the slice decomposition \cite{Bouttier_Planar_2012,Bouttier_Planar_2019}. 
According to \cite[Equation~(2.5)]{Bouttier_Planar_2004}, the characteristic equation for (rooted) mobiles implies the ``string equation''
\begin{align}
	R = 1 + \sum_{k=1}^\infty g_{k} \binom{2k-1}{k} R^k,
\end{align}
which is the analogue of Corollary~\ref{cor:string}.

Finally, we point the reader to \cite{Bouttier_Geodesic_2003,Bouttier_Planar_2012} for a showcase on how to use these generating functions to establish graph distance statistics, in a similar vein as our Theorem~\ref{thm:dist}. 
We note that the results for maps are more precise, since they provide control on individual distances instead of the distance difference that we address.
It is an interesting open problem to find an expression for the distance-dependent two-point function with control on $d_{\mathsf{X}}(h_1,h_2)$ for doubly-cusped spheres $\mathsf{X} \in \mathcal{M}_{0,2+n}(0,0,\mathbf{L})$.

\paragraph{Cusp-less planar hyperbolic surfaces.}
As discussed above, the geodesic boundaries and cusps of a hyperbolic surface can be viewed as the analogues of, respectively, the faces and vertices of a map. 
However, one aspect of this analogy is broken: while every map contains at least one vertex, not every hyperbolic surface contains a cusp.
Since we require an origin cusp to define the spine, this prevents us from giving a bijective interpretation of the general genus-$0$ Weil-Petersson volume $V_{0,n}(\mathbf{L})$ where each boundary length is allowed to be positive.
Inspired by the works of Bouttier-Guitter-Miermont \cite{Bouttier_Bijective_2022,Bouttier_Enumeration_2024} for maps, the third author will close this gap in \cite{Zonneveld_tree_2025} by extending the spine construction to the ``half-tight'' hyperbolic cylinders introduced in \cite{budd2023topological}. 

\paragraph{Scaling limits.} 
The tree bijection for punctured hyperbolic spheres based on the spine construction was vital for proving the scaling limit towards the Brownian sphere in \cite{budd2025random}, mirroring probabilistic methods well-established for large random planar maps and their tree encodings \cite{LeGall_Uniqueness_2013,Miermont_Brownian_2013}.
This work provides a more general tree bijection that is well suited to address similar scaling limit results for Weil-Petersson random surfaces with boundaries.
As explained in \cite[Section~1.3]{budd2023topological}, it is natural to consider the genus-$g$ \emph{Boltzmann hyperbolic surface} associated to a weight $\mu$ in the form of a Borel measure on $\mathbb{R}_{\geq 0}$ for which the partition function \eqref{eq:partfun} is finite, $F_g[\mu] < \infty$.
It is the hyperbolic surface $\mathsf{X}$ with a random number $n$ of boundaries of random lengths $\mathbf{L} \in \R_{\geq 0}^n$ obtained by first sampling $(n,\mathbf{L})$ with joint density $V_{g,n}(\mathbf{L})/(n! F_g[\mu])\rmd \mu(L_1)\cdots\rmd\mu(L_n)$ and then, conditionally on $(n,\mathbf{L})$, sampling a Weil-Petersson random surface in $\mathcal{M}_{g,n}(\mathbf{L})$.
In the case of genus-$0$ Boltzmann hyperbolic surface $\mathsf{X}$ with a distinguished cusp, Theorem~\ref{thm:introbijection} implies that $\mathsf{Spine}(\mathsf{X})$ is a random decorated tree $\tree$ with a rather explicit law, so one can attempt\footnote{Although explicit, the law of the tree is more complicated than the pure cusp-case of \cite{budd2025random}. However, there is also a significant simplification when $\mu$ admits boundaries of positive length: the random tree naturally features an embedded single-type Galton-Watson process corresponding to genealogy of boundary vertices, whereas \cite{budd2025random} had to resort to introducing certain cutpoints to identify such a process.} the same procedure as in \cite{budd2025random} to establish a scaling limit for the random metric space.
If the weight $\mu$ is not too heavy-tailed and one conditions on the number $n$ of boundaries to be large, one would expect again to obtain the Brownian sphere and the correct scaling constant can be extracted from a calculation similar to Corollary~\ref{cor:distvar}.
However, there is also the potential of witnessing different universality classes if $\mu$ is taken to be heavy-tailed, namely the stable carpets and stable gaskets appearing in the limits of random maps with large face degrees \cite{Curien_Scaling_2025,LeGall_scaling_2010}.
The necessary non-generic criticality criteria on $\mu$ have already been investigated in the works \cite{Castro_Critical_2023,Koster_universality_2022}.

\begin{figure}[h!]
    \centering
    \includegraphics[width=.32\linewidth]{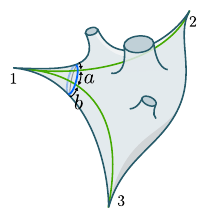}
    \caption{For $\mathsf{X}\in\mathcal{M}_{0,3+n}(0,0,0,\mathbf{L})$, the horocycle log-ratio is given by $H(\mathsf{X}) = \log(b/a)$ where $a$ and $b$ are the lengths of segments of a horocycle around the first cusp determined by the shortest geodesics to the second and third cusp.\label{fig:horocyclelength}}
\end{figure}
\paragraph{Topological recursion.}
We mentioned that Theorem~\ref{thm:dist} should somehow be related to the generalization of Mirzakhani's topological recursion introduced in \cite{budd2023topological}. 
To give a hint why this might be the case, let us discuss a slightly different statistic on triply-cusped hyperbolic spheres.
Let us denote by $\mathcal{M}^{\circ'}_{0,3+n}(0,0,0,\mathbf{L}) \subset \mathcal{M}_{0,3+n}(0,0,0,\mathbf{L})$ the full-measure subset of such surfaces that have unique length-minimizing geodesics from the second and third cusp to the first.
For $\mathsf{X}\in\mathcal{M}^{\circ'}_{0,3+n}(0,0,0,\mathbf{L})$ this pair of geodesics cuts a small horocycle $h_1$ around the first cusp into two horocyclic segments of lengths $a,b>0$, say.
Then we let the \emph{horocycle log-ratio} be $H(\mathsf{X}) = \log(b/a)$, see Figure~\ref{fig:horocyclelength}.
We claim that it has the same distribution as the distance-difference $D(\mathsf{X})$, meaning that we may also identify
\begin{align}
	\hat{X}_n(u;\mathbf{L}) = \int_{\mathcal{M}^{\circ'}_{0,3+n}(0,0,0,\mathbf{L})} e^{2 u H(\mathsf{X})} \rmd\operatorname{Vol}_\WP(\mathsf{X}).\label{eq:logratiostat}
\end{align} 

The reason is that there exists a natural measure-preserving involution (modulo interchanging labels $2$ and $3$) on $\mathcal{M}_{0,3+n}^{\circ'}(0,0,0,\mathbf{L})$ that interchanges the two statistics.
This involution amounts to cutting open the surface $\mathsf{X}$ along the pair of geodesic into an $n$-holed quadrangle, and then regluing the sides in the unique other way to achieve a triply-cusped sphere $\mathsf{X}'$ again.
The cusp formed by the two opposite vertices of the quadrangle receives labeled $1$ and it is not too difficult to see that the gluing seams ending there are the unique shortest geodesics. 
We refer to Figure~\ref{fig:cutgluepants} for a pictorial proof of the fact that $D(\mathsf{X}) = H(\mathsf{X'})$ and vice versa.

\begin{figure}[h!]
    \centering
    \includegraphics[width=\linewidth]{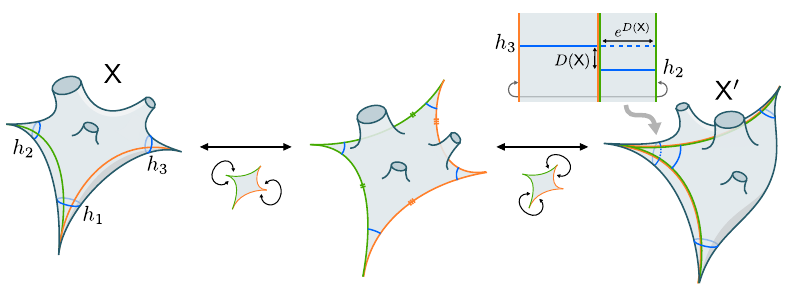}
    \caption{A pictorial proof of $D(\mathsf{X}) = H(\mathsf{X}')$. After cutting and regluing the unit-length horocycles $h_2$ and $h_3$ on $\mathsf{X}$ are mismatched on $\mathsf{X}'$ by a signed hyperbolic distance $D(\mathsf{X})$, as illustrated at the top in the upper-half plane representation. Moving $h_2$ to align with $h_1$ changes its length by a factor $e^{D(\mathsf{X})}$, so that $H(\mathsf{X}') = \log(e^{D(\mathsf{X})}/1)=D(\mathsf{X})$. \label{fig:cutgluepants}}
\end{figure}

One may interpret a triply-cusped sphere as a ``tight'' pair of pants, in the sense of \cite{budd2023topological}, with three boundaries of length $0$.
We note that the power series $\hat{X}(u;\mu]$ provides control on the partitioning of the horocycle around the initial puncture that is reminiscent of the partitioning of the initial boundary underlying the Mirzakhani-McShane identity \cite{Mirzakhani2007}.
In fact, it is possible to turn \eqref{eq:logratiostat} into a statistic for tight pairs of pants by cutting and regluing $\mathsf{X}'$ along the three shortest geodesics connecting the three cusps, similarly to what was done in \cite{Bouttier_Bijective_2022} for the case of maps.
But detailing this is beyond the scope of this work.

\subsection*{Acknowledgments}

This work is part of the VIDI programme with project number VI.Vidi.193.048, which is financed by the Dutch Research Council (NWO).

\section{Tree bijection for hyperbolic surfaces}\label{sec:bijection}

In this section we prove the bijection part of Theorem~\ref{thm:introbijection}.
We do so by establishing a stronger bijective result for the full moduli space, which involves are larger class of combinatorial trees than the ones introduced above.

\subsection{Bijection for the full moduli space}
Denote by $\treeset^{\mathrm{all}}_n$ the collection of (red and white) bicolored plane trees $\tree$ with $n$ white boundary vertices labeled $1,\ldots, n$ and a finite number of red inner vertices of degree at least $3$.
Moreover, each corner of a boundary vertex of $\tree$ is labeled \emph{ideal} or \emph{non-ideal}.
Let $\vsetinner(\tree)$, $\vsetboundary(\tree)$, $\edgeset(\tree)$ are the inner vertices, boundary vertices and edges of $\tree$ as before, but now $\cornerset(\tree)$ is the set of non-ideal corners of $\tree$ only.
The number of non-ideal corners of $\vertex \in \vsetboundary(\tree)$ is denoted $\operatorname{nonid}(\vertex)$.
For $\vertex\in \vsetinner(\tree) \cup \vsetboundary(\tree)$ the oriented edges starting at $\vertex$ are $\vec{\edge}_{\vertex,1}, \ldots, \vec{\edge}_{\vertex,\deg(\vertex)} \in \vec{\edgeset}(\tree)$ in counterclockwise order.   
Similarly, the non-ideal corners of $\bvertex \in \vsetboundary(\tree)$ are $\corner_{\bvertex,1},\ldots,\corner_{\bvertex,\operatorname{nonid}(\bvertex)}$.
For $\mathbf{L} \in \mathbb{R}_{\geq 0}^n$ we let $\treeset^{\mathrm{all}}_n(\mathbf{L}) \subset \treeset^{\mathrm{all}}_{n}$ be the subset of trees such that for each $i=1,\ldots,n$ the boundary vertex $\bvertex$ with label $i$ has exclusively ideal corners if and only if $L_i=0$.
One may check that these definitions are compatible with those in the introduction, in the sense that $\treeset_n(\mathbf{L}) \subset \treeset^{\mathrm{all}}_n(\mathbf{L})$ if for $\tree \in \treeset_n(\mathbf{L})$ we label each corner at the $i$th boundary vertex to be ideal or non-ideal depending on whether $L_i=0$ or $L_i>0$ respectively.

To $\tree\in\treeset^{\mathrm{all}}_n(\mathbf{L})$ we associate the polytope $\mathcal{A}_{\tree}(\mathbf{L})$ as follows.
Recall the notation $\Delta_k^{(y)} \coloneqq \{ \mathbf{x} \in (0,\infty)^{k} : x_1 + \cdots + x_{k} = y \}$ for the open $(k-1)$-dimensional simplex of size $y$.
Then we set
\begin{align}
	\mathcal{A}_{\tree}(\mathbf{L}) \coloneqq &\left\{ \varphi : \vec{\edgeset}(\tree) \to \R_{\geq 0} \middle|\begin{array}{l}
	\varphi(\vec{\edge}_{\bvertex,i}) = 0\text{ for }\bvertex\in\vsetboundary(\tree), 1\leq i\leq\deg(\bvertex),\\
	\varphi(\vec{\edge}_{\vertex,i}) > 0\text{ for }\vertex\in \vsetinner(\tree),  1\leq i\leq\deg(\vertex),\\
	\sum_{i=1}^{\deg(\vertex)}\varphi(\vec{\edge}_{\vertex,i}) = \pi\text{ for }\vertex\in \vsetinner(\tree), \nonumber\\ 
	\varphi(\vec{\edge}) + \varphi(\cev{\edge}) < \pi\text{ for }\edge\in \edgeset(\tree)\end{array}\right\} \\&\times \prod_{\substack{\bvertex\in \vsetboundary(\tree)\\L_{\bvertex}>0}} \left[\Delta_{\deg(\bvertex)}^{(L_{\bvertex}/2)} \times \Delta_{\operatorname{nonid}(\bvertex)}^{(L_{\bvertex}/2)}\right]\times \prod_{\substack{\bvertex\in \vsetboundary(\tree)\\L_{\bvertex}=0}} \Delta^{(1)}_{\deg(\bvertex)}.\label{eq:polytopedef}
\end{align}
It is a polytope of dimension 
\begin{align}
	\dim\mathcal{A}_\tree(\mathbf{L}) &= \sum_{\vertex\in\vsetinner(\tree)} (\deg(\vertex)-1) +  \sum_{\substack{\bvertex\in\vsetboundary(\tree)\\L_{\bvertex}>0}} (\deg(\bvertex)+\operatorname{nonid}(\bvertex)-2) + \sum_{\substack{\bvertex\in\vsetboundary(\tree)\\L_{\bvertex}=0}} (\deg(\bvertex)-1)\nonumber\\
	&= 2n-4 + \sum_{\vertex\in\vsetinner(\tree)} (3-\deg(\vertex)) + \sum_{\substack{\bvertex\in\vsetboundary(\tree)\\L_\bvertex> 0}} (\operatorname{nonid}(\bvertex)-\deg(\bvertex)) + \sum_{\substack{\bvertex\in\vsetboundary(\tree)\\ L_\bvertex=0}} (1-\deg(\bvertex)), \label{eq:dimpolytope}
\end{align}
where we used that $\tree$ is a combinatorial tree and therefore 
\begin{align}
	\sum_{\vertex\in\vsetinner(\tree)} (2-\deg(\vertex)) + \sum_{\bvertex\in\vsetboundary(\tree)} (2-\deg(\bvertex)) = 2.\label{eq:treeidentity}
\end{align}
Since the summands in \eqref{eq:dimpolytope} are all non-positive, it follows that $\dim \mathcal{A}_\tree \leq 2n-4$ with equality if and only if $\tree \in \treeset_n(\mathbf{L})$ belongs to the subclass of trees introduced in Section~\ref{sec:mainresults}, meaning that each inner vertex has degree $3$, each boundary vertex corresponding to a cusp has degree $1$ and each corner at a boundary vertex corresponding to a geodesic boundary is non-ideal.

The main result of this section will be the following.

\begin{theorem}\label{thm:fullbijection}
	For $n\geq 2$ and $L_1,\ldots,L_n \geq 0$, the Bowditch-Epstein-Penner spine constructions determines a bijection 
	\begin{align*}
		\spinebijection : \mathcal{M}_{0,1+n}(0, \mathbf{L}) \to \bigsqcup_{\tree \in \treeset^{\mathrm{all}}_n(\mathbf{L})} \mathcal{A}_{\tree}(\mathbf{L}).
	\end{align*}
\end{theorem}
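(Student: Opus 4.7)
The plan is to define $\spinebijection$ explicitly via the Bowditch-Epstein-Penner construction applied to the extended surface $\check{\mathsf{X}}$, verify that its output lies in $\mathcal{A}_\tree(\mathbf{L})$, and then construct an inverse by assembling $\check{\mathsf{X}}$ from elementary hyperbolic pieces dictated by the tree and its decoration. For the forward direction, starting from $\mathsf{X} \in \mathcal{M}_{0,1+n}(0,\mathbf{L})$, attach a funnel to every boundary of positive length to obtain $\check{\mathsf{X}}$, fix the horocycle $h_0$ of length $1$ around the origin cusp, and define the spine $S \subset \check{\mathsf{X}}$ as the closure of the locus of points admitting more than one length-minimizing geodesic to $h_0$. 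The standard Bowditch-Epstein-Penner theory makes $S$ into a locally finite graph whose complement decomposes canonically into hyperbolic polygons, generically triangles lifting to ideal triangles in the universal cover. Identifying the endpoints of spine arcs running into a common funnel or cusp promotes $S$ to a finite bicolored plane tree $\tree$: inner vertices are the spine vertices of valence at least $3$, boundary vertices are the cusps and funnels, and a corner at a boundary vertex is declared non-ideal precisely when the corresponding spine endpoint lies on the core geodesic of a funnel rather than escaping to the boundary at infinity of that funnel. The decoration $\varphi(\vec{\edge})$ is read off as the angular share, at the inner spine vertex at the tail of $\vec{\edge}$, of the dual Delaunay polygon sitting across $\edge$; the simplex data at each boundary vertex records the arc-length partition of the relevant horocycle at the cusp or funnel and of the two halves of the core geodesic.

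The output lies in $\mathcal{A}_\tree(\mathbf{L})$ for three essentially geometric reasons. First, $\sum_i \varphi(\vec{\edge}_{\vertex,i}) = \pi$ at each inner vertex $\vertex$ because the dual Delaunay polygons around $\vertex$ tile a full neighborhood of $\vertex$, forcing the angular shares at $\vertex$ to sum to $\pi$ after the natural normalization. Second, the strict Delaunay inequality $\varphi(\vec{\edge}) + \varphi(\cev{\edge}) < \pi$ is the classical empty-horoball condition: any violation would allow an edge flip yielding a strictly shorter competing geodesic to $h_0$, contradicting the defining equidistance property of $S$. Third, the two simplex constraints at a boundary vertex come from the fact that the horocycle around a cusp has length $1$, whereas in a funnel both the horocycle at infinity and the core geodesic contribute $L_\bvertex/2$ on each side; the individual simplex coordinates then give the lengths of the successive arcs cut out by the incident spine arcs.

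For the inverse, given $\tree \in \treeset^{\mathrm{all}}_n(\mathbf{L})$ and a point of $\mathcal{A}_\tree(\mathbf{L})$, assemble $\check{\mathsf{X}}$ by gluing: an ideal hyperbolic polygon at each inner vertex whose angular share data is prescribed by $\varphi$; a horocyclic sector for each ideal corner of prescribed arc length; and a right-angled hyperbolic piece for each non-ideal corner whose horocycle-arc and geodesic-arc lengths are read from the two simplex coordinates. The combinatorics of $\tree$ prescribes a canonical gluing of these pieces along pairs of sides, and the strict Delaunay condition on $\varphi$ guarantees that the assembled surface realises $\tree$ as its actual spine, rather than a post-flip competitor. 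Removing the funnels yields an element of $\mathcal{M}_{0,1+n}(0,\mathbf{L})$, and the two maps are mutually inverse by the uniqueness of the Delaunay decomposition of $\check{\mathsf{X}}$ dual to the spine, combined with the uniqueness of geodesic representatives of homotopy classes.

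The main obstacle is to carry out the BEP construction in the presence of funnels with sufficient care: one must verify that the distance-to-$h_0$ function extends in a controlled way to $\check{\mathsf{X}}$, that the spine inside each funnel consists of finitely many unbounded arcs escaping to infinity, that the endpoints of these arcs can be consistently identified to form a tree rather than a more complicated graph, and that the ideal versus non-ideal distinction at boundary-vertex corners is exhaustive and stable. Additional care is needed at non-generic loci — for instance when more than three closest-point lifts of the origin cusp are simultaneously realised at a single spine vertex, or when a shortest geodesic meets the core of a funnel tangentially — and these degenerate cases are precisely what forces the enlargement of the tree family from $\treeset_n(\mathbf{L})$ to $\treeset^{\mathrm{all}}_n(\mathbf{L})$ and the introduction of the ideal-corner labels.
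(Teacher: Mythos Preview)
Your overall strategy matches the paper's --- extend to $\check{\mathsf{X}}$, take the cut locus with respect to the origin, read off a decorated tree, then construct the inverse by gluing elementary hyperbolic pieces --- but two of your geometric specifications are incorrect and would prevent the argument from going through as written.

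First, your criterion for non-ideal corners is wrong. You declare a corner non-ideal when ``the corresponding spine endpoint lies on the core geodesic of a funnel rather than escaping to the boundary at infinity.'' In fact \emph{every} spine arc entering a funnel escapes to an ideal point on the boundary at infinity; none terminate on the core geodesic (this is part of Lemma~\ref{lem:spineproperties}). The correct distinction (Lemma~\ref{lem:surfacetocombinatorialtree}) is whether the two arcs bounding the corner are limiting parallel (same ideal endpoint: ideal corner) or ultraparallel (distinct ideal endpoints: non-ideal corner). Under your criterion, every corner at a positive-length boundary would be ideal, contradicting $\tree\in\treeset^{\mathrm{all}}_n(\mathbf{L})$.

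Second, your building blocks for the inverse are not the ones that work. The paper does not assign ``an ideal hyperbolic polygon at each inner vertex'' together with ``horocyclic sectors'' at ideal corners and ``right-angled pieces'' at non-ideal corners. The canonical decomposition (Lemma~\ref{lem:surfacepartition}) instead assigns to each \emph{edge} $\edge$ a quadrilateral $\hypdiamond_\edge$ made of two congruent triangles with angles $(\varphi(\vec{\edge}),\varphi(\cev{\edge}),0)$, and to each \emph{non-ideal corner} $\corner$ an ideal wedge $\hypwedge_\corner$; these are glued along the \emph{ribs}, i.e.\ the geodesics to the origin. The Delaunay inequality $\varphi(\vec{\edge})+\varphi(\cev{\edge})<\pi$ is then just the triangle-angle-sum inequality in $\hypdiamond_\edge$, not an empty-horoball/flip argument. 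The simplex coordinates $(w_{\bvertex,j}),(v_{\bvertex,j})$ are not horocycle partitions at a funnel (there is no horocycle there); they are log-ratios of rib radii in the upper-half-plane model, equivalently segment lengths along the boundary geodesic, and the identities $\sum_j w_{\bvertex,j}=\sum_j v_{\bvertex,j}=L_\bvertex/2$ require the computation in Lemma~\ref{lem:bdryvar}. Finally, the nontrivial step in showing $\spinebijection\circ\mathsf{Glue}=\mathrm{id}$ is not ``uniqueness of the Delaunay decomposition'' but the verification that the union of quadrilateral diagonals is actually the cut locus of the assembled surface, which the paper handles by a convexity argument.
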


\subsection{Generalizing the Bowditch-Epstein-Penner spine construction}

Let $\mathsf{X}\in\Mod_{0,n+1}(0,\mathbf{L})$ be a genus-$0$ hyperbolic surface with a distinguished cusp, that we call the \emph{origin}, and $n\geq 2$ additional boundaries of lengths $\mathbf{L} = (L_1,\ldots, L_n)$ (or cusps whenever $L_i=0$).
The universal cover of $\mathsf{X}$ can be represented as a convex domain $P$ in the Poincar\'e upper-half plane $\mathbb{H}$.
Denote by $\Gamma$ the Fuchsian group $\Gamma\subset \mathrm{PSL}(2,\R)$ such that $\mathsf{X} = P/\Gamma$.
It is convenient to consider the extended surface $\check{\mathsf{X}} = \mathbb{H}/\Gamma \supset X$, corresponding to the surface $\mathsf{X}$ in which to each geodesic boundary of positive length we glue a \emph{funnel} with a geodesic boundary of the same length.
The surface $\check{\mathsf{X}}$ thus has a complete hyperbolic metric with a \emph{boundary at infinity} corresponding to each geodesic boundary of $\mathsf{X}$, and $\mathsf{X}$ corresponds to the convex core of $\check{\mathsf{X}}$.
A point on a boundary at infinity is called an \emph{ideal point}, which can be regarded as a common endpoint of limiting parallel infinite geodesics running into the funnel.

\begin{figure}[h!]
    \centering
    \includegraphics[width=.8\linewidth]{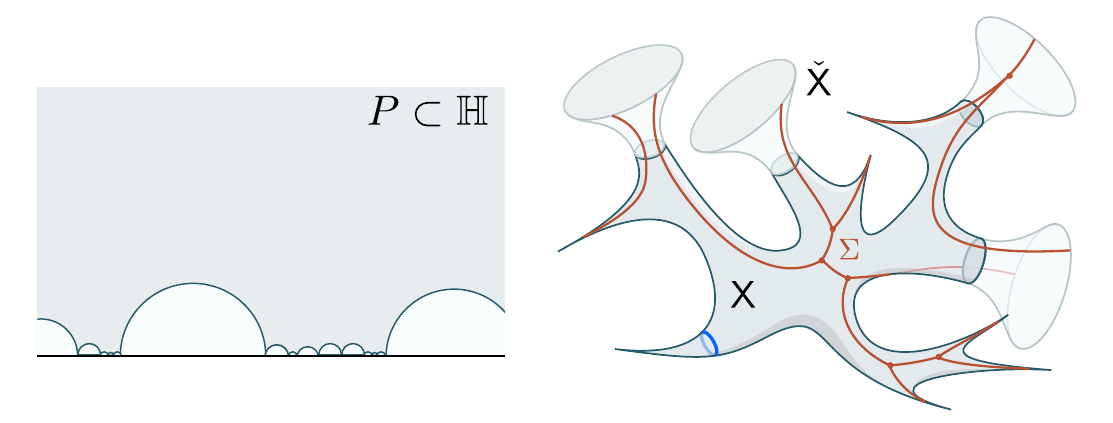}
    \caption{Illustration of the universal cover $P$ of the surface $\mathsf{X}$. Note that a boundary geodesic of $\mathsf{X}$ lifts to a geodesic side of $P$. The surface $\check{\mathsf{X}} = \mathbb{H}/\Gamma$ is extended with funnels. The spine of $\check{\mathsf{X}}$ is illustrated in red.\label{fig:treebijection-1}}
\end{figure}

For any point $x \in \mathsf{X}$ we can make sense of the shortest geodesics from $x$ to the origin as follows.
Let $h$ be a horocycle around the origin that separates $x$ from the cusp and $d_{\mathsf{X}}(x,h)$ the hyperbolic distance between $x$ and $h$.
Let $w(x)$ be the number of distinct geodesics from $x$ to $h$ of length $d_{\mathsf{X}}(x,h)$.
Since each of these geodesics meets $h$ perpendicularly and thus continues into the cusp, the extended geodesics and their number $w(x)$ is independent of the choice of horocycle $h$.

The \emph{spine} $\Sigma$ of $\check{\mathsf{X}}$ is defined as the subset of points with at least two shortest geodesics to the root,
\begin{equation}
    \Sigma = \{ x \in \check{\mathsf{X}} : w(x) \geq 2 \} \subset \check{\mathsf{X}}.
\end{equation}
The points with three or more geodesics will be called \emph{inner vertices} 
\begin{equation}
    V = \{ x\in \check{\mathsf{X}} : w(x) \geq 3\} \subset \Sigma.
\end{equation}
The following lemma is a variant of \cite[Lemma~2.2.1]{Bowditch_Natural_1988} by Bowditch \& Epstein.

\begin{lemma}\label{lem:spineproperties}
    The spine $\Sigma$ satisfies the following properties.
    \begin{enumerate}
        \item $V$ is a finite set.
        \item $\Sigma \setminus V = \{ x\in\check{\mathsf{X}} : w(x) = 2\}$ consists of a finite union of open geodesic arcs. Tracking each arc in each direction it either ends at a point in $V$ or at an ideal point inside a funnel.
        \item Each point in $x\in V$ is the endpoint of exactly $w(x)\geq 3$ arcs.
        \item The boundary at infinity of each boundary or cusp (except the origin) contains the endpoint of at least one arc.
        \item The compactified spine $\overline{\Sigma}$, obtained by joining the endpoints of all arcs ending on the boundary at infinity of a single funnel or cusp, is a tree. 
    \end{enumerate}
\end{lemma}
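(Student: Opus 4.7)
My approach lifts the picture to the universal cover $\mathbb{H}$. A horocycle $h$ around the origin lifts to a $\Gamma$-invariant family of pairwise disjoint open horoballs $\{B_p\}_{p\in\mathcal{P}}$ indexed by the $\Gamma$-orbit $\mathcal{P}\subset\partial\mathbb{H}$ of parabolic fixed points over the origin. For a lift $\tilde{x}\in\mathbb{H}$ of $x\in\check{\mathsf{X}}$, the signed distance $\rho(\tilde{x})=\inf_{p\in\mathcal{P}}d_{\mathbb{H}}(\tilde{x},B_p)$ recovers $d_{\check{\mathsf{X}}}(x,h)$, and $w(x)$ equals the number of horoballs realising this infimum (which is attained, by discreteness of $\mathcal{P}$). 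Thus $\Sigma$ lifts to the $1$-skeleton of the Voronoi diagram of $\{B_p\}$, and $V$ corresponds to its Voronoi vertices modulo $\Gamma$. The one geometric input I would use throughout is that the locus equidistant from two disjoint horoballs in $\mathbb{H}$ is a complete geodesic $\ell_{pq}$, obtained as the fixed set of the unique isometry swapping $B_p$ and $B_q$.

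\textbf{Local structure, finiteness, and ideal endpoints.} Around any lift $\tilde{x}$ with $w(\tilde{x})=k\geq 2$, strict convexity of each $\tilde{y}\mapsto d_{\mathbb{H}}(\tilde{y},B_p)$ forces the $k$ horoballs realising $\rho(\tilde{x})$ to be cyclically ordered around $\tilde{x}$, so a neighbourhood of $\tilde{x}$ in the Voronoi diagram consists of exactly $k$ geodesic arcs along consecutive bisectors $\ell_{p_ip_{i+1}}$; this yields the local parts of (2) and (3). To upgrade this to the finiteness in (1), I would split $\check{\mathsf{X}}$ into its compact convex core $\mathsf{X}$ (on which $\rho$ descends to a proper function away from the origin, so $V\cap \mathsf{X}$ is bounded and locally finite, hence finite) and the funnels. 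In each funnel $F$, lift to a half-plane in $\mathbb{H}$ bounded by a geodesic $\tilde\gamma$; since every horoball $B_p$ lies on the $\mathsf{X}$-side of $\tilde\gamma$, only finitely many bisectors $\ell_{pq}$ can cross $\tilde\gamma$, and each such crossing continues as a complete geodesic into $F$ without meeting any further bisector, so it limits on a single ideal point of $\partial_\infty F$. A parallel analysis handles arcs running into non-origin cusps. Together this gives (1), completes (2), and confirms (3).

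\textbf{Items (4)--(5) and main obstacle.} The complement $\check{\mathsf{X}}\setminus\Sigma$ consists of the projections of the open Voronoi cells of $\{B_p\}$, and each non-origin boundary component of $\check{\mathsf{X}}$ is contained in the interior of a single such cell; its boundary in $\check{\mathsf{X}}$ therefore contributes at least one spine arc limiting on the corresponding boundary at infinity, giving (4). For (5), after capping each boundary at infinity with a disk and joining the ideal endpoints of arcs therein, $\overline{\Sigma}$ embeds in a topological sphere whose complement is $n$ open disks, one per non-origin boundary component. A Euler-characteristic count, combined with (3) and the connectedness of $\Sigma$ (which follows from each Voronoi cell being simply connected), forces $\overline{\Sigma}$ to be a tree. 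The main obstacle is the funnel analysis: the original Bowditch--Epstein argument leans on compactness of the ambient surface, so one must verify by hand that bisectors in a funnel neither accumulate near $\partial_\infty F$ nor exit and re-enter $F$, which is precisely where the geometric fact that $\mathcal{P}$ lies on a single side of every boundary lift in the universal cover plays the decisive role.
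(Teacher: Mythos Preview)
Your approach to items (1)--(3) is essentially the paper's: both lift to $\mathbb{H}$, identify the spine with the $1$-skeleton of the horoball Voronoi diagram, read off the local geodesic structure from bisectors, and obtain global finiteness by splitting into a compact core plus funnel/cusp ends. The paper phrases the funnel analysis in terms of finitely many lifts of the horocycle modulo the cyclic generator, but this amounts to your observation that only finitely many bisectors can cross the boundary geodesic.

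There is, however, a genuine error in your argument for (5). You claim that after compactifying to a sphere the complement of $\overline{\Sigma}$ consists of $n$ open disks, one per non-origin boundary. In fact the complement is a \emph{single} disk: $\check{\mathsf{X}}\setminus\Sigma=\{x:w(x)=1\}$ is the projection of one Voronoi cell modulo its parabolic stabiliser $\cong\mathbb{Z}$, hence an annulus around the origin cusp, and capping the origin gives one disk. The $n$ non-origin ends become \emph{vertices} of $\overline{\Sigma}$, not faces. With the correct count $F=1$ your Euler argument does go through ($V-E+1=2$ plus connectedness yields a tree), but as written it would give $V-E=2-n$. You may be conflating this one-cusp spine with the symmetric Bowditch--Epstein spine relative to \emph{all} cusps, where the complement really does have one face per cusp. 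The paper sidesteps the count entirely: tracing the unique shortest geodesic to the origin retracts $\check{\mathsf{X}}\setminus\Sigma$ onto a horodisk, so the complement is directly seen to be a punctured disk, and $\overline{\Sigma}$ is connected and simply connected in one stroke. Your phrasing of (4) is also unclear (what is a ``boundary component of $\check{\mathsf{X}}$'' when $\check{\mathsf{X}}$ is complete?), though the intended topological separation argument can be made to work.
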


\begin{proof}
    Let us fix a horocycle $h$ around the origin that is short enough not to self-intersect and to be contained in $\mathsf{X} \subset \check{\mathsf{X}}$ (by the collar lemma, considering the unit-length horocycle suffices).
    Then $h$ does not intersect the spine $\Sigma$ and it lifts to a countable collection $\mathcal{C}$ of disjoint (and non-nested) horocycles in $\mathbb{H}$.
	Let $x\in \check{\mathsf{X}}$ be a point separated from the origin by $h$ and $y\in\mathbb{H}$ a lift of $x$.
    Then $w(x)$ counts the number of horocycles in $\mathcal{C}$ that are closest to $x$, say at distance $r$.
    The number $w(x)$ is necessarily finite, since every hyperbolic disk, in particular the ball of radius $r$ around $y$, can meet only finitely many disjoint horodisks.
    
    Let $\varepsilon>0$ be such that there are still only $w(x)$ horocycles in $\mathcal{C}$ within distance $r+2\varepsilon$ of $y$.
    Then the shape of the spine can be established within an $\varepsilon$-neighborhood of $x$, i.e. $\Sigma \cap \mathsf{Ball}(x,\varepsilon)$.
    This is particularly conveniently seen in the Poincar\'e disk $\mathbb{D}$ when $y$ is positioned at the origin (Fig.~\ref{fig:spinenbhd}).
    \begin{figure}[h!]
        \centering
        \includegraphics[width=\linewidth]{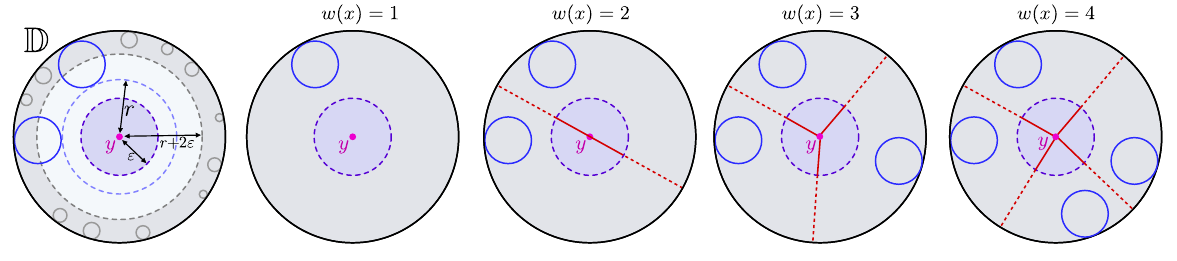}
        \caption{The first figure illustrates the collection $\mathcal{C}$ of disjoint horocycles. The blue ones are at closest distance $r$ from $y$, while the gray ones are at distance at least $r+2\varepsilon$. The other figures illustrate the spine neighbourhoods $\Sigma \cap \mathsf{Ball}(x,\varepsilon)$ for different number $w(x)$.\label{fig:spinenbhd}}
    \end{figure}
	In particular, $V$ is locally finite and $\Sigma\setminus V$ is composed of a locally-finite collection of geodesic arcs, and at least three arcs meet at every point of $V$.

	Let $\mathsf{X}^{\circ} \subset \mathsf{X} \subset \check{\mathsf{X}}$ be the compact surface obtained by removing the interiors of small horocycles around the cusps of $\mathsf{X}$.
	It follows that $V \cap \mathsf{X}^\circ$ is finite and $(\Sigma \setminus V) \cap \mathsf{X}^{\circ}$ consists of finitely many geodesic arcs.
	In particular, there are only finitely many points $\Sigma \subset \partial \mathsf{X}^{\circ}$ on the boundary of $\mathsf{X}^\circ$ with more than one shortest geodesic to $h$.
	Now we examine a single funnel or horocyclic region of $\check{\mathsf{X}} \setminus \mathsf{X}^\circ$, which we can represent in $\mathbb{H}$ with a hyperbolic respectively parabolic generator $g$.
	It follows that the cut-locus in this region is determined by a non-zero finite number (up to translation by $g$) lifts of $h$.
	From this is it follows easily that the cut-locus in this region consists of a finite number of geodesic arcs as well, and that at least one of the arcs ends on an ideal point.
	Combined this proves the first four properties.

    For the complement of the spine $\check{\mathsf{X}} \setminus \Sigma = \{x\in\check{x}: w(x)=1\}$ we obtain a retraction onto the horodisk around the origin, by tracing the unique shortest geodesics to the origin.
    Hence, $\check{\mathsf{X}} \setminus \Sigma$ is topologically an open punctured disk. 
    This in turn implies that the compactified spine $\overline{\Sigma}$ is connected and simply-connected, so must be a tree.
\end{proof}

We can rephrase this lemma as follows.

\begin{lemma}\label{lem:surfacetocombinatorialtree}
Let $\tree$ be the bicolored plane graph with red inner vertices $\vsetinner(\tree) = V$, white boundary vertices $\vsetboundary(\tree)$ corresponding to the $n$ boundaries at infinity, and edge set $\edgeset(\tree)$ determined by the arcs in $\Sigma \setminus V$. 
We designate each corner of a boundary vertex to be ideal respectively non-ideal depending on whether the two adjacent arcs in $\Sigma \setminus V$ are limiting parallel (i.e.\ end on the same ideal point) respectively ultraparallel (i.e.\ end on distinct ideal points).
Then $\tree\in \treeset_n^{\mathrm{all}}(\mathbf{L})$.
\end{lemma}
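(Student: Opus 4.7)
The aim is to verify that the combinatorial object $\tree$ produced by the construction satisfies every defining property of $\treeset_n^{\mathrm{all}}(\mathbf{L})$. Since nearly all of these are direct translations of items of Lemma~\ref{lem:spineproperties}, the plan is largely to unwind the construction and cross-check each defining item in turn. The orientation of $\check{\mathsf{X}}$ equips every vertex---whether an inner vertex $\vertex \in V$ or a white vertex obtained by identifying all arc endpoints on a common boundary at infinity---with a canonical counterclockwise cyclic order of incident arcs, giving $\tree$ its plane structure. Items (1)--(2) of Lemma~\ref{lem:spineproperties} make $\tree$ a finite graph, item (5) identifies it with the tree $\overline{\Sigma}$, item (3) delivers the degree bound $\deg(\vertex)=w(\vertex)\geq 3$ on inner vertices, and item (4) guarantees that every one of the $n$ labelled non-origin boundaries indeed appears as a white vertex.

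The substantive step is to match the ideal / non-ideal corner labelling against the condition encoded in $\mathbf{L}$. If the $i$-th boundary is a cusp ($L_i = 0$), then in the universal cover it is stabilised by a maximal parabolic subgroup $\langle p\rangle \subset \Gamma$ with a single fixed point $\xi\in\partial\mathbb{H}$. Every geodesic arc of $\Sigma$ escaping into the cusp lifts to a geodesic ending at some $\Gamma$-translate of $\xi$, so in the quotient all such arcs end at the single ideal point. Consequently any two consecutive arcs at the associated boundary vertex share this common ideal endpoint, and every corner there is ideal, as required.

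If instead the $i$-th boundary is a funnel ($L_i > 0$), the boundary at infinity is a topological circle whose ideal points correspond to $\langle g\rangle$-orbits on $\partial\mathbb{H}$ minus the two fixed points of a hyperbolic generator $g$ of translation length $L_i$. To produce at least one non-ideal corner it suffices to show that the spine lifts within a fundamental strip for $g$ do not all share a single $\langle g\rangle$-orbit of ideal endpoints. The plan is to reuse the horocycle-bisector picture from the proof of Lemma~\ref{lem:spineproperties}: the spine in the funnel is cut out by perpendicular bisectors between lifts of the horocycle $h$ around the origin, and since $g$ translates these horocyclic lifts non-trivially, the bisectors are forced to extend to geodesic rays ending on at least two distinct $\langle g\rangle$-orbits of $\partial\mathbb{H}$. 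Equivalently, the complement of the spine inside the funnel cannot retract onto a single horocyclic lift.

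I expect this final step---ruling out a funnel whose spine ideal endpoints all collapse to a single orbit---to be the main technical obstacle, since the other items are essentially bookkeeping on top of Lemma~\ref{lem:spineproperties}. Once it is granted, every boundary vertex $\bvertex$ with $L_\bvertex>0$ carries at least one non-ideal corner, every $\bvertex$ with $L_\bvertex = 0$ carries only ideal corners, and the combinatorial data assembled from $\Sigma$ lies in $\treeset_n^{\mathrm{all}}(\mathbf{L})$.
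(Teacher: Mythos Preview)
Your plan follows the paper's approach exactly: Lemma~\ref{lem:spineproperties} supplies the tree structure, and one then checks that the corner labelling matches $\mathbf{L}$. Your treatment of the cusp case ($L_i=0$) is the paper's argument verbatim. For the funnel case ($L_i>0$) the paper is in fact even terser than your sketch---it says only ``the converse holds as well'' with no further justification---so you are right to flag this as the substantive step.

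Your proposed bisector argument for this converse, however, does not go through as stated. The assertion that the perpendicular bisectors must land on at least two distinct $\langle g\rangle$-orbits of $\partial\mathbb{H}$ is not a consequence of $g$ acting non-trivially on the horocycle lifts. Concretely, suppose the horocycle lifts relevant to the funnel form a single $\langle g\rangle$-orbit $\{g^n a\}_{n\in\Z}$, with $g(z)=e^{L_i}z$ and $a<0$. Then the equidistant locus between the horocycles at $g^n a$ and $g^{n+1}a$ is the semicircle $|z|=e^{(n+1/2)L_i}|a|$, and the ideal endpoints $e^{(n+1/2)L_i}|a|$ on the positive real axis all lie in a \emph{single} $\langle g\rangle$-orbit. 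So your sufficient condition can fail: the spine in a funnel may well consist of a single arc.

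What actually forces a non-ideal corner when $L_i>0$ is simpler and does not require counting orbits. The boundary at infinity of a funnel is a full circle, while the spine arcs end at only finitely many ideal points; hence at least one corner subtends a non-degenerate arc of that circle. The two bounding rays of such a corner---viewed as adjacent lifts in the universal cover---end at distinct ideal points and are therefore ultraparallel, so the corner is non-ideal. Equivalently, the partition of $\check{\mathsf{X}}$ described just after this lemma (Lemma~\ref{lem:surfacepartition}) must cover the whole funnel, and points near a generic ideal point can only lie in a wedge $\hypwedge_\corner$; since wedges exist only for non-ideal corners, at least one such corner is present. This is presumably what the paper's ``the converse holds as well'' is gesturing at.
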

\begin{proof}
Lemma~\ref{lem:spineproperties} implies $\tree\in \treeset_n^{\mathrm{all}}$.
Moreover, all arcs entering a cusp necessarily end on the same ideal point so the corners of the corresponding boundary vertex are exclusively ideal, and the converse holds as well.
This shows that $\tree\in \treeset_n^{\mathrm{all}}(\mathbf{L})$.
\end{proof}
\subsection{A canonical tiling of $\check{\mathsf{X}}$}

We will now show that, with the help of the spine $\Sigma$, we can canonically tile $\check{\mathsf{X}}$ by hyperbolic triangles and wedges.
For any $x\in \check{\mathsf{X}} \setminus \Sigma$ there exists a unique geodesic from $x$ to the origin that does not intersect $\Sigma$.
Extending this geodesic beyond $x$, either it hits $\Sigma$ for the first time at some point $\mathsf{end}(x) \in \Sigma$ or it is an infinite geodesic disjoint from $\Sigma$ approaching an ideal point $\mathsf{end}(x)$ on a boundary at infinity.
By convention, we further let $\mathsf{end}(x) = x$ for $x\in \Sigma$. 

\begin{figure}[h!]
    \centering
    \includegraphics[width=.4\linewidth]{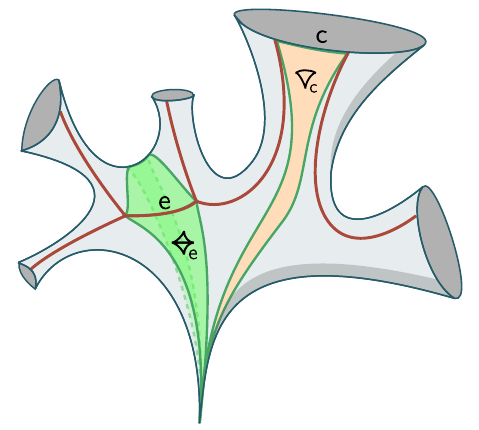}
    \caption{Example of a quadrilateral ${\protect\hypdiamond_\edge}$ associated to an edge $\edge\in\edgeset(\tree)$ and a wedge ${\protect\hypwedge_\corner}$ associated to a non-ideal corner $\corner\in\cornerset(\tree)$. They are bounded by inner ribs and boundary ribs respectively.\label{fig:hyptriangulation}}
\end{figure}

With some abuse of notation we identify an edge $\edge \in \edgeset(\tree)$ with the corresponding arc in $\Sigma \setminus \vsetinner(\tree)$, so that we can associate to it the subset $\hypdiamond_\edge \coloneqq \{ x\in  \check{\mathsf{X}} : \mathsf{end}(x) \in \edge\}$.
It is an (open) hyperbolic quadrilateral in $\check{\mathsf{X}}$ corresponding to the union of $\edge$ and two (open) hyperbolic triangles that share a side along $\edge$, see Figure~\ref{fig:hyptriangulation}.
By definition of the spine, these triangles have the same side lengths (with respect to a chosen horocycle at the origin) and therefore they are congruent via reflection in $\edge$.

If $\corner \in \cornerset(\tree)$ is a non-ideal corner, the two neighbouring edges $\edge$ and $\edge'$ end on distinct ideal points on the boundary at infinity, and therefore we may naturally associate to it an open interval of the boundary at infinity that we denote by $\corner$ as well.
We then consider the subset $\hypwedge_\corner \coloneqq \{ x\in  \check{\mathsf{X}} : \mathsf{end}(x) \in \corner\}$, which has the geometry of an \emph{ideal wedge}, i.e.\ a region isometric to $\{x+iy:0<x<1,y>0\}\subset \mathbb{H}$, see Figure~\ref{fig:hyptriangulation}.

Finally, the geodesics to the origin that bound the wedges $\hypwedge_\corner$ and quadrilaterals $\hypdiamond_\edge$ are called the \emph{ribs} of $\check{\mathsf{X}}$.
We distinguish \emph{inner ribs} and \emph{boundary ribs} depending on whether their endpoint is in $V$ or is an ideal point.
The union of ribs (including the points of $V$) is denoted $\mathsf{Ribs}(\check{\mathsf{X}})$.
We have thus established the following lemma.

\begin{lemma}\label{lem:surfacepartition}
	The surface $\check{\mathsf{X}}$ partitions into 
	\begin{align*}
		\mathsf{Ribs}(\check{\mathsf{X}})\cup\bigcup_{\edge\in \edgeset(\tree)} \hypdiamond_\edge \cup \bigcup_{\corner\in \cornerset(\tree)} \hypwedge_\corner.
	\end{align*}
\end{lemma}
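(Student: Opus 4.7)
The plan is to partition $\check{\mathsf{X}}$ by classifying each point $x$ according to its image under the map $\mathsf{end}$, which records where the geodesic from the origin through $x$ first (re-)encounters the spine or its ideal boundary. The union of the three collections corresponds to the three possible types of image, and disjointness is automatic.

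First, I would verify that $\mathsf{end}: \check{\mathsf{X}} \to \Sigma \cup (\text{ideal points})$ is well-defined on all of $\check{\mathsf{X}}$. For $x \notin \Sigma$ there is, by definition of $\Sigma$, a unique shortest geodesic from $x$ to the origin cusp; extending this geodesic past $x$, it either meets $\Sigma$ for the first time at a point (set to be $\mathsf{end}(x)$) or remains disjoint from $\Sigma$ and escapes into a funnel, limiting onto a unique ideal point (set to be $\mathsf{end}(x)$). For $x \in \Sigma$ we set $\mathsf{end}(x) = x$ by convention.

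Next, I would perform a case analysis based on where $\mathsf{end}(x)$ lives, invoking Lemma~\ref{lem:spineproperties}. If $\mathsf{end}(x) \in \vsetinner(\tree) = V$, then $x$ lies on the geodesic segment from an inner vertex to the origin, i.e.\ on an inner rib, hence $x \in \mathsf{Ribs}(\check{\mathsf{X}})$. If $\mathsf{end}(x)$ lies in the relative interior of an arc identified with an edge $\edge \in \edgeset(\tree)$, then by definition $x \in \hypdiamond_\edge$. If $\mathsf{end}(x)$ is an ideal point $p$, I would use that by Lemma~\ref{lem:spineproperties} only finitely many ideal points on any given boundary at infinity occur as endpoints of arcs of $\Sigma \setminus V$; these endpoints partition the boundary at infinity of each funnel into finitely many open intervals. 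If $p$ is itself such an endpoint, then $x$ lies on the boundary rib aimed at $p$, hence $x \in \mathsf{Ribs}(\check{\mathsf{X}})$; otherwise $p$ lies in the interior of one of the open intervals, which by construction is identified with a non-ideal corner $\corner \in \cornerset(\tree)$, whence $x \in \hypwedge_\corner$.

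For disjointness, I would simply note that membership in each of the three collections is determined by the nature of $\mathsf{end}(x)$ (inner vertex or endpoint of a spine arc for $\mathsf{Ribs}$, interior of an edge for $\hypdiamond_\edge$, interior of a non-ideal corner interval for $\hypwedge_\corner$), and these three alternatives are mutually exclusive; uniqueness of the shortest geodesic for $x \notin \Sigma$ ensures the region of $\check{\mathsf{X}}$ assigned to a given edge or corner is unique.

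The main (mild) obstacle is the bookkeeping at the boundary at infinity of each funnel: one has to check that the cyclic alternation, around a boundary vertex $\bvertex$, between endpoints of arcs (giving boundary ribs in $\mathsf{Ribs}$) and open intervals (giving wedges $\hypwedge_\corner$) matches exactly the combinatorial alternation in $\tree$ between edge ends and non-ideal corners at $\bvertex$, with ideal corners contracting the interval to a single ideal point already absorbed into the boundary ribs. This is a direct consequence of the definition of ideal/non-ideal corner given in Lemma~\ref{lem:surfacetocombinatorialtree}, so no further work is required beyond unpacking definitions.
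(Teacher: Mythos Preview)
Your proposal is correct and matches the paper's approach exactly: the paper does not give a separate proof but simply states ``We have thus established the following lemma'' after introducing $\mathsf{end}$, $\hypdiamond_\edge$, $\hypwedge_\corner$ and $\mathsf{Ribs}(\check{\mathsf{X}})$, so the argument is precisely the case analysis on $\mathsf{end}(x)$ that you spell out. Your additional remark on the ideal/non-ideal bookkeeping at boundary vertices is a useful clarification but, as you note, follows directly from the definitions in Lemma~\ref{lem:surfacetocombinatorialtree}.
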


\subsection{Tree labels}

We need to establish which information to record together with the combinatorial tree $\tree$ in order for the hyperbolic surface $\mathsf{X}$ to be uniquely characterized. 
The wedges $(\hypwedge_c)_{c\in\cornerset(\tree)}$ have a unique (intrinsic) geometry, while the pairs of congruent triangles making up the quadrilaterals $(\hypdiamond_\edge)_{\edge\in\edgeset(\tree)}$ are determined by their angles.
For $\edge \in \edgeset(\tree)$ we let $\varphi(\vec{\edge}), \varphi(\cev{\edge}) \in [0, \pi)$ be the angle in either one of the triangles of $\hypdiamond_\edge$ at the corner corresponding to the starting point of $\vec{\edge}$ and $\cev{\edge}$ respectively.

\begin{lemma}\label{lem:anglevar}
	The angles $\varphi : \vec{\edgeset}(\mathfrak{t}) \to [0,\pi)$ satisfy
	\begin{enumerate}[label = (\roman*)]
		\item $\varphi(\vec{\edge}) = 0$ if and only if $\vec{\edge}$ starts at a boundary vertex;
		\item $\varphi(\vec{\edge}) + \varphi(\cev{\edge}) < \pi$ for $\edge\in \edgeset(\tree)$;
		\item $\sum_{j=1}^{\deg(\vertex)} \varphi(\vec{\edge}_{\vertex,j}) = \pi$ for $\vertex \in \vsetinner(\tree)$.
	\end{enumerate} 
\end{lemma}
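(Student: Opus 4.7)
My plan is to prove all three claims by working in the universal cover $\mathbb{H}$ and exploiting the reflection symmetry underlying the spine construction.

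The key local picture is at an inner vertex. Fix $\vertex\in\vsetinner(\tree)$ of degree $k=\deg(\vertex)\geq 3$ and lift it to $\tilde{\vertex}\in\mathbb{H}$. As in the proof of Lemma~\ref{lem:spineproperties}, there are precisely $k$ lifted horocycles $H_1,\ldots,H_k\in\mathcal{C}$ at minimal distance from $\tilde{\vertex}$, giving $k$ shortest-geodesic rays $g_1,\ldots,g_k$ (the ribs at $\vertex$) in counterclockwise cyclic order at $\tilde{\vertex}$. Near $\tilde{\vertex}$ the spine arc $s_j$ between $g_j$ and $g_{j+1}$ is the locus of points equidistant from $H_j$ and $H_{j+1}$; a brief computation in the upper-half-plane model shows this locus is a geodesic and that the inversion across it is a hyperbolic reflection fixing $\tilde{\vertex}$ and swapping $H_j\leftrightarrow H_{j+1}$ (and hence $g_j\leftrightarrow g_{j+1}$). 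Consequently $s_j$ bisects the angle $\angle_{\tilde{\vertex}}(g_j,g_{j+1})$ at $\tilde{\vertex}$.

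With the bisector property in hand, claim (iii) is immediate: the angle $\varphi(\vec{\edge}_{\vertex,j})$ at $\vertex$ in either of the two triangles of the quadrilateral associated to $s_j$ equals $\tfrac12\angle_{\tilde{\vertex}}(g_j,g_{j+1})$, and summing around $\vertex$ gives $\sum_{j=1}^{k} 2\varphi(\vec{\edge}_{\vertex,j}) = 2\pi$. The same formula establishes the positive direction of (i), since $\varphi(\vec{\edge})$ is then a half-angle between two distinct rib directions; conversely, when $\vec{\edge}$ starts at a boundary vertex, the corresponding corner of the associated triangle lies at an ideal point on a boundary at infinity or at a cusp, where the hyperbolic angle is $0$ by convention. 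For (ii), each of the two triangles making up $\hypdiamond_\edge$ has $\edge$ as one side and is bounded on the remaining two sides by ribs, both of which asymptote to the origin cusp, a single ideal point of $\check{\mathsf{X}}$. Hence this triangle is a genuine (possibly partly ideal) hyperbolic triangle with an ideal vertex at the origin, angles $\varphi(\vec{\edge}),\varphi(\cev{\edge}),0$, and strictly positive area (since $\edge$ has positive length and the origin is disjoint from $\Sigma$); its Gauss-Bonnet angle defect $\pi-\varphi(\vec{\edge})-\varphi(\cev{\edge})>0$ is precisely the required strict inequality.

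The only nontrivial input is the existence of the bisecting reflection, which I expect to be the main obstacle. However, it is handled by a short direct calculation in coordinates: placing the centres of $H_j,H_{j+1}$ at $\infty$ and $0$ shows that the equidistant locus is a Euclidean semicircle $|z|=R$ for a specific $R$ determined by the two horocycles, and that the inversion $z\mapsto R^2/\bar{z}$ is an orientation-reversing isometry of $\mathbb{H}$ fixing this geodesic pointwise and swapping $H_j\leftrightarrow H_{j+1}$.
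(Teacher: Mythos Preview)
Your proof is correct and follows essentially the same approach as the paper's: (i) comes from the ideal-versus-finite dichotomy at the starting vertex, (ii) from the angle defect of a hyperbolic triangle with one ideal vertex at the origin, and (iii) from the fact that each $\varphi(\vec{\edge}_{\vertex,j})$ appears twice when summing the full angle $2\pi$ around $\vertex$. The only difference is that you rederive the reflection symmetry across each spine arc explicitly, whereas the paper had already recorded this congruence of the two triangles of $\hypdiamond_\edge$ just before the lemma and simply invokes it.
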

\begin{proof}
	The first property follows from the fact that the angle $\varphi(\vec{\edge})$ vanishes only when the arc starts at an ideal point, hence when $\vec{\edge}$ starts at a boundary vertex.
	The second property is a consequence of the fact that angles in a hyperbolic triangle add up to less than $\pi$, while the last property stems from the fact that the angles around $v\in \vsetinner(\tree)$ should add up to $2\pi$ (and each angle $\varphi(\vec{\edge}_{\vertex,j})$ contributes twice because the quadrilateral consists of two congruent triangles).
\end{proof}

The combinatorial structure of $\tree$ dictates which sides of the quadrilaterals and wedges are paired at the ribs of $\mathsf{X}$.
The inner ribs are half-infinite so there is no ambiguity in the gluing of the adjacent sides of the quadrilaterals.
The boundary ribs, however, are bi-infinite so there is a shearing degree of freedom in the gluing of the neighboring pair of quadrilaterals or quadrilateral-wedge pair.

\begin{figure}[h!]
    \centering
    \includegraphics[width=.9\linewidth]{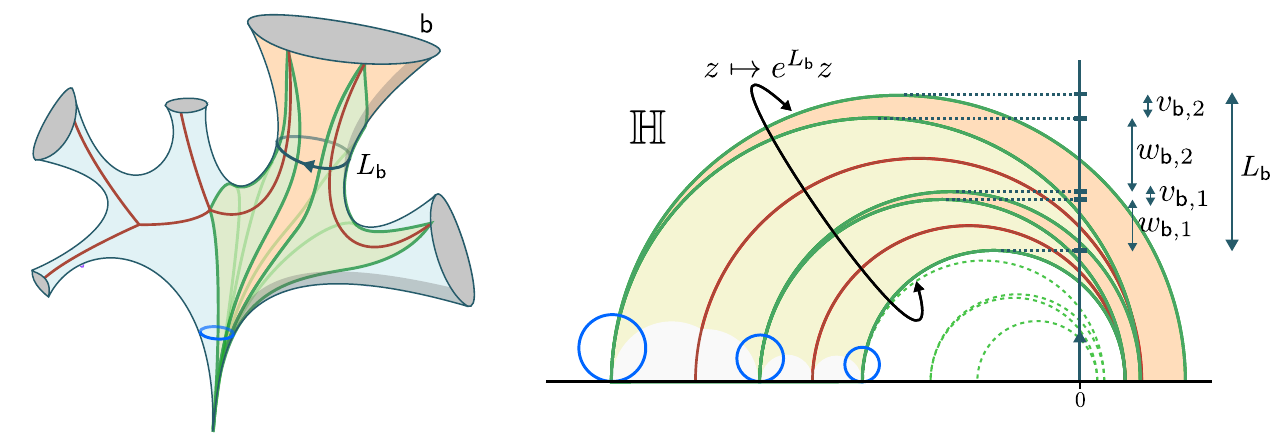}
    \caption{The neighbourhood of a funnel, tiled by hyperbolic quadrangles and wedges, can be conveniently represented in the upper-half plane $\mathbb{H}$ by aligning the closed geodesic with the vertical axis.\label{fig:treebijectionboundary}}
\end{figure}

To fix the gluing unambiguously, let us zoom in on the neighborhood of a funnel of perimeter $L_\bvertex > 0$ associated to a boundary vertex $\bvertex\in \vsetboundary(\tree)$, meaning that $L_\bvertex = L_i$ if $\bvertex$ carries label $i$, see Figure~\ref{fig:treebijectionboundary} for an example where $\deg(\bvertex) = 2$.
By convention we orient the closed geodesic of length $L_\bvertex$, corresponding to the boundary of the original surface $\mathsf{X}$, so that the origin is on the left and the boundary at infinity on the right.
Viewed in the universal cover in the upper-half plane $\mathbb{H}$, we can conjugate the Fuchsian group $\Gamma$ to assure that the geodesic is oriented upwards along the imaginary axis and the associated hyperbolic generator is $z \mapsto e^{L_\bvertex} z$.
The boundary ribs ending at $\bvertex$ lift to a sequence of disjoint semicircles in $\mathbb{H}$ that each intersect the imaginary axis.
For each $j=1,\ldots,\deg(\bvertex)$, if $r' > r'> 0$ are the radii corresponding to the ribs bounding the quadrilateral $\hypdiamond_{\vec{\edge}_{\bvertex,j}}$, we set
\begin{align}
	w_{\bvertex,j} = \log\frac{r'}{r} 
\end{align}
to be the log-ratio.
Similarly, for each $j=1,\ldots,\operatorname{nonid}(\bvertex)$, we let $v_{\bvertex,j} = \log\frac{r'}{r}$ be the log-ratio of the radii $r'>r>0$ associated to the ribs that bound the wedge $\hypwedge_{\corner_{\bvertex,j}}$.
Alternatively, we can associate to each boundary rib a marking on the boundary geodesic by projecting the top of the semicircle horizontally onto the vertical axis. 
These markings give rise to a canonical partition of the boundary into intervals of hyperbolic length $w_{\bvertex,1},\ldots,w_{\bvertex,\deg{\bvertex}}, v_{\bvertex,1}, \ldots, v_{\bvertex,\operatorname{nonid}(\bvertex)}$.

\begin{lemma}\label{lem:bdryvar}
	If $\bvertex \in \vsetboundary(\mathfrak{t})$ is the boundary vertex with label $i$ and $L_\bvertex > 0$, then these labels satisfy 
	\begin{align}
		\sum_{j=1}^{\deg(\bvertex)} w_{\bvertex,j} = \sum_{j=1}^{\operatorname{nonid}(\bvertex)} v_{\bvertex,j} = \frac{L_\bvertex}{2}\label{eq:vwsum},
	\end{align}
	and they uniquely determine the gluing of the quadrilaterals $\hypdiamond_{\vec{\edge}_{\bvertex,j}}$ and wedges $\hypwedge_{\corner_{\bvertex,j}}$ incident to $\bvertex$.
\end{lemma}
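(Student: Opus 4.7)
The plan is to work in the upper-half-plane lift set up just above the lemma, with the closed geodesic on the imaginary axis and the funnel generator $g:z\mapsto e^{L_\bvertex}z$, and to exploit the Voronoi decomposition of $\mathbb{H}$ with respect to the $\Gamma$-orbit of the origin cusp. The identity $\sum_j v_{\bvertex,j}=L_\bvertex/2$ will emerge as a telescoping sum of log-ratios of consecutive ideal-point lifts, after which $\sum_j w_{\bvertex,j}=L_\bvertex/2$ follows for free from the fact that the boundary-rib markings partition one full period of the closed geodesic.

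First I would identify the Voronoi structure of the tiles. Each wedge $\hypwedge_{\corner_{\bvertex,j}}$ lies inside a single Voronoi cell of the origin-cusp orbit, because every one of its points sends its unique shortest geodesic to the origin out through the angular sector $\corner_{\bvertex,j}$ of the boundary at infinity, which contains no spine point, so no branching between lifts of the origin occurs along the way. By contrast, each quadrilateral $\hypdiamond_{\vec{\edge}_{\bvertex,j}}$ is cut by its diagonal spine arc into two halves sitting in two distinct Voronoi cells. Consequently, the two boundary ribs bounding a wedge start from two distinct ideal-point lifts $\tilde{p}_{j},\tilde{p}_{j+1}$ but terminate at a common origin-cusp lift $\tilde{O}$, whereas the two boundary ribs bounding a quadrilateral start from a common ideal-point lift $\tilde{p}$ but terminate at two distinct origin-cusp lifts $\tilde{O}_{1},\tilde{O}_{2}$.

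Next I would compute the crossing heights: a boundary rib from $\tilde{p}>0$ to $\tilde{O}<0$ lifts to a semicircle with these real endpoints and meets the imaginary axis at height $\sqrt{|\tilde{O}|\tilde{p}}$, so inserting the previous observation into the definitions yields
\[
v_{\bvertex,j}=\tfrac{1}{2}\log(\tilde{p}_{j+1}/\tilde{p}_{j}),\qquad w_{\bvertex,j}=\tfrac{1}{2}\log(|\tilde{O}_{2}|/|\tilde{O}_{1}|);
\]
each $v$ sees only consecutive ideal-point lifts and each $w$ only the two origin lifts on either side of a spine arc. Going once cyclically around $\bvertex$, the distinct ideal-point lifts appear as $\tilde{p}_{1},\ldots,\tilde{p}_{\operatorname{nonid}(\bvertex)}$ followed by $g\tilde{p}_{1}=e^{L_\bvertex}\tilde{p}_{1}$, so $\sum_j v_{\bvertex,j}$ telescopes to $\tfrac{1}{2}\log(e^{L_\bvertex})=L_\bvertex/2$. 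Since the markings partition one period of the closed geodesic into intervals of total hyperbolic length $L_\bvertex$, the identity $\sum_j w_{\bvertex,j}=L_\bvertex-\sum_j v_{\bvertex,j}=L_\bvertex/2$ is immediate.

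For the uniqueness of the gluing I would observe that once the cyclic combinatorial neighbourhood of $\bvertex$ and the labels $(w_{\bvertex,j},v_{\bvertex,j})$ subject to these two constraints are fixed, placing the first marking at any chosen basepoint of the closed geodesic pins down the positions of all remaining boundary-rib markings around the funnel. Each marking records precisely where the corresponding quadrilateral or wedge is attached to the funnel boundary, so this data fixes all the shear parameters for gluing the $\hypdiamond_{\vec{\edge}_{\bvertex,j}}$ and $\hypwedge_{\corner_{\bvertex,j}}$ along their shared boundary ribs, up to the global rotation absorbed in the choice of lift. I expect the main difficulty to be the clean identification of the Voronoi structure of the tiles---in particular verifying that each wedge lies in a single Voronoi cell and that each quadrilateral is bisected by its spine arc into two---after which the telescoping and the bookkeeping for the gluing are straightforward.
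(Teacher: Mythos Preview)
Your Voronoi observation is correct and useful: the two ribs bounding a wedge $\hypwedge_{\corner_{\bvertex,j}}$ share a common origin-cusp lift $\tilde{O}$ and have distinct ideal-point lifts $\tilde{p}_j,\tilde{p}_{j+1}$, while the two ribs bounding a quadrilateral $\hypdiamond_{\vec{\edge}_{\bvertex,j}}$ share a common ideal-point lift $\tilde{p}$ and have distinct origin lifts $\tilde{O}_1,\tilde{O}_2$. The problem is the next step. The paper defines $w_{\bvertex,j}$ and $v_{\bvertex,j}$ as log-ratios of the \emph{radii} of the consecutive rib semicircles (equivalently, the heights of their \emph{tops}, which are then projected onto the imaginary axis). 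You instead compute the log-ratios of the \emph{intersection heights} $\sqrt{|\tilde{O}|\tilde{p}}$ of the semicircles with the imaginary axis. These are different quantities: for a wedge with shared $\tilde{O}$ the radius log-ratio is
\[
v_{\bvertex,j}=\log\frac{\tilde{p}_{j+1}-\tilde{O}}{\tilde{p}_j-\tilde{O}},
\]
which does \emph{not} equal $\tfrac{1}{2}\log(\tilde{p}_{j+1}/\tilde{p}_j)$ unless $\tilde{O}=0$. (Try $\tilde{O}=-1$, $\tilde{p}_1=1$, $\tilde{p}_2=4$: the paper's $v$ is $\log 2.5$, your formula gives $\log 2$.) So your telescoping sum computes the wrong quantity, and the derivation of $\sum_j v_{\bvertex,j}=L_\bvertex/2$ collapses.

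The paper's proof avoids this by tracking the Euclidean radii $\rho(\tilde{O})$ of the lifts of the origin horocycle $h$. The key point is that the two ribs of a quadrilateral have equal hyperbolic length from their shared ideal point $\tilde{p}$ to $h$; a lambda-length computation then gives $\rho(\tilde{O}_2)/\rho(\tilde{O}_1)=\big((\tilde{p}-\tilde{O}_2)/(\tilde{p}-\tilde{O}_1)\big)^2=e^{2w_{\bvertex,j}}$, and \emph{this} telescopes cleanly to $\sum_j 2w_{\bvertex,j}=L_\bvertex$ because $\rho$ is unchanged across wedges (same $\tilde{O}$) and scales by $e^{L_\bvertex}$ under $g$. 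Your Voronoi setup actually feeds directly into this argument once you replace the intersection heights by the horocycle radii; the uniqueness part of your sketch is fine and essentially matches the paper's reconstruction.
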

\begin{proof}
Pairs of consecutive boundary ribs around $\bvertex$ are asymptotically parallel, on the right (in $\mathbb{H}$) if they bound the same quadrilateral and on the left if they bound the same wedge.
By construction, the radii of two lifts of a rib to $\mathbb{H}$ that are related by $z \mapsto e^{L_\bvertex} z$ differ in radius by a factor
\begin{align*}
	\exp\left(\sum_{j=1}^{\deg(\bvertex)} w_{\bvertex,j} + \sum_{j=1}^{\operatorname{nonid}(\bvertex)} v_{\bvertex,j}\right) = e^{L_\bvertex}.
\end{align*}  
Another relation follows from examining the lifts of the horocycle $h$ around the origin, shown in blue in Figure~\ref{fig:treebijectionboundary}.
Using that the sides of the quadrilateral $\hypdiamond_{\vec{\edge}_{\bvertex,j}}$ incident to $\bvertex$ are of equal length, the lifts of $h$ in $\mathbb{H}$ correspond to circles tangent to the horizontal axis that differ in radius by a factor $e^{2 w_{\bvertex,j}}$.
In particular, the lifts of $h$ at the endpoints of two lifts of a rib related by $z \mapsto e^{L_i} z$ differ in radius by a factor
\begin{align*}
	\exp\left(2\sum_{j=1}^{\deg(\bvertex)} w_{\bvertex,j}\right) = e^{L_\bvertex}
\end{align*}  
as well.
Combining the last two relations proves \eqref{eq:vwsum}.

\begin{figure}[h!]
    \centering
    \includegraphics[width=.6\linewidth]{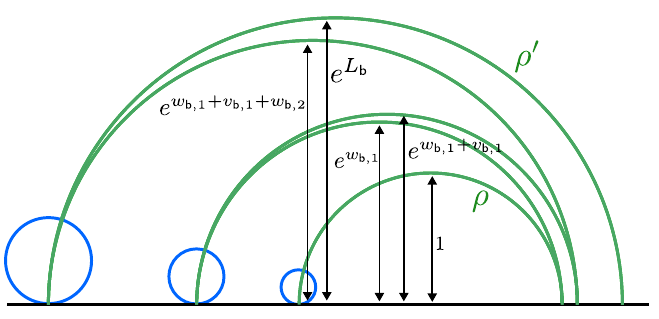}
    \caption{The reconstruction of Figure~\ref{fig:treebijectionboundary} with the help of the boundary parameters $(w_{\bvertex,j})_{j=1}^{\deg(\bvertex)}$ and $(v_{\bvertex,j})_{j=1}^{\operatorname{nonid}(\bvertex)}$. \label{fig:treebijectionboundary-reconstruct}}
\end{figure}

Knowledge of $(w_{\bvertex,j})_{j=1}^{\deg(\bvertex)}$ and $(v_{\bvertex,j})_{j=1}^{\operatorname{nonid}(\bvertex)}$ is sufficient to reconstruct the picture in $\mathbb{H}$ up to scaling (see Figure~\ref{fig:treebijectionboundary-reconstruct}): choosing the first rib $\rho$ as a unit-radius semicircle, the radii and positions of the other (lifts of) ribs are uniquely determined by the log-ratios of the radii and the tangency requirements.
The unique hyperbolic generator that fixes $\infty$ and maps $\rho$ to its sibling $\rho'$ of radius $e^{L_i}$ also fixes a unique vertical geodesic that intersects all ribs.
Translating all ribs horizontally to align this geodesic with the vertical axis, we reproduce the original representation of the neighbourhood of $\bvertex$ in $\mathbb{H}$ up to rescaling.
In particular, it unambiguously fixes the gluing prescription of the quadrilaterals and wedges. 
\end{proof}

The situation in the case $L_\bvertex = 0$ is slightly different. 
As already observed in Lemma~\ref{lem:surfacetocombinatorialtree}, all corners of $\bvertex$ in this case are ideal so the cusp region consists of $\deg(\bvertex)$ hyperbolic quadrilaterals only.
Considering the unit-length horocycle around $\bvertex$, it is then natural to define $w_{\bvertex,j}$ to be the length of horocyclic segment contained in $\hypdiamond_{\vec{e}_{\bvertex,j}}$.  

\begin{lemma}\label{lem:cuspvar}
	If $\bvertex \in \vsetboundary(\mathfrak{t})$ corresponds to a cusp, $L_\bvertex = 0$, then these labels satisfy 
	\begin{align}
		\sum_{j=1}^{\deg(\bvertex)} w_{\bvertex,j} = 1 \label{eq:wcuspsum}
	\end{align}
	and they uniquely determine the gluing of the quadrilaterals $\hypdiamond_{\vec{\edge}_{\bvertex,j}}$ incident to $\bvertex$.
\end{lemma}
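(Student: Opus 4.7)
The plan is to mimic the argument of Lemma~\ref{lem:bdryvar}, but with the hyperbolic generator at the funnel replaced by the parabolic generator at the cusp, which turns all the multiplicative relations into additive ones on a single horocycle.

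First, I would represent the cusp neighbourhood in $\mathbb{H}$ by conjugating $\Gamma$ so that the cusp of $\bvertex$ lies at $\infty$ and the associated parabolic generator is $z\mapsto z+1$, normalized so that the horizontal line $\mathrm{Im}(z)=1$ descends to the unit-length horocycle $h_\bvertex$ around $\bvertex$ in $\check{\mathsf{X}}$. Since by Lemma~\ref{lem:surfacetocombinatorialtree} the vertex $\bvertex$ has only ideal corners, the boundary ribs incident to $\bvertex$ all approach the same ideal point (the cusp), so they lift to a bi-infinite sequence of vertical half-lines in $\mathbb{H}$. The quadrilateral $\hypdiamond_{\vec{\edge}_{\bvertex,j}}$ is bounded by two consecutive such verticals, and its intersection with $\mathrm{Im}(z)=1$ is a horizontal segment of Euclidean (and hyperbolic) length equal to the horizontal spacing between those verticals. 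By definition this spacing is precisely $w_{\bvertex,j}$.

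Next I would verify \eqref{eq:wcuspsum}: taking two consecutive lifts of the first boundary rib, they are related by the generator $z\mapsto z+1$ and hence differ by a horizontal translation of length $1$; on the other hand, the horizontal distance between them decomposes, along the unit-length horocycle, into the sum of the $\deg(\bvertex)$ segments corresponding to the quadrilaterals incident to $\bvertex$, which gives
\begin{align*}
	\sum_{j=1}^{\deg(\bvertex)} w_{\bvertex,j} = 1.
\end{align*}

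Finally, for the reconstruction of the gluing, I would argue exactly as in Lemma~\ref{lem:bdryvar}: fixing an arbitrary starting vertical rib, the data $(w_{\bvertex,j})_{j=1}^{\deg(\bvertex)}$ uniquely determine the positions of the remaining vertical ribs in $\mathbb{H}$ as consecutive horizontal translates, and the parabolic generator $z\mapsto z+1$ is then forced by \eqref{eq:wcuspsum} to identify the first and last ribs, pinning down the gluing between the quadrilaterals $\hypdiamond_{\vec{\edge}_{\bvertex,j}}$ uniquely. The main (modest) obstacle compared to the hyperbolic case is just noticing that the two independent relations from the shearing and from the horocycle lifts, which were both needed in the $L_\bvertex>0$ case, collapse to a single relation here because the unit-length horocycle is itself canonical and the parabolic generator leaves it invariant; once that is observed, the proof is essentially a simpler copy of the previous one.
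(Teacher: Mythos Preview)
Your proposal is correct and follows essentially the same approach as the paper: place the cusp at $\infty$ with parabolic generator $z\mapsto z+1$, observe that the boundary ribs lift to vertical lines so that the $w_{\bvertex,j}$ are the widths of the vertical strips, and conclude \eqref{eq:wcuspsum} from the fundamental domain of the parabolic, with uniqueness of the gluing following from the resulting picture. Your additional remark about the two relations of Lemma~\ref{lem:bdryvar} collapsing to one is a nice observation not spelled out in the paper, but the core argument is the same.
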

\begin{proof}
	Let us consider the neighbourhood of $\bvertex$ in $\mathbb{H}$ with the associated parabolic generator $z \mapsto z+1$ so that the cusp is located at $\infty$, see Figure~\ref{fig:treebijectioncusp}.
	In this case the unit-length horocycle is located at height one and the parameter $w_{\bvertex,j}$ measures the width of the vertical strip corresponding to $\hypdiamond_{\vec{e}_{\bvertex,j}}$.
	Since this clearly amounts to a partition of the unit-interval, \eqref{eq:wcuspsum} follows.
	Moreover, the picture and therefore the gluing prescription of the quadrilaterals is fully characterized by $(w_{\bvertex,j})_{j=1}^{\deg(\bvertex)}$.
	\begin{figure}[h!]
    \centering
    \includegraphics[width=.45\linewidth]{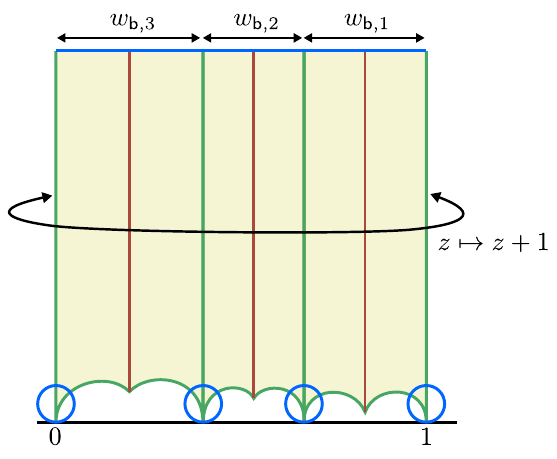}
    \caption{Example of the cusp region in $\mathbb{H}$ for a vertex $\bvertex$ with $L_{\mathsf{b}}=0$ and $\deg(\bvertex) = 3$.\label{fig:treebijectioncusp}}
\end{figure}
\end{proof}

We are now ready to properly define the mapping $\spinebijection$ of Theorem~\ref{thm:fullbijection}.
If $\mathsf{X} \in \mathcal{M}_{0,1+n}(0,\mathbf{L})$, then by Lemma~\ref{lem:surfacetocombinatorialtree} we have that the tree structure of the spine gives a bicolored tree $\tree \in \treeset^{\mathrm{all}}_n(\mathbf{L})$.
Lemma~\ref{lem:anglevar}, Lemma~\ref{lem:bdryvar} and Lemma~\ref{lem:cuspvar} together imply that 
\begin{align}
	\lambda \coloneqq \left( (\varphi(\vec{\edge}))_{\vec{\edge}\in\vec{\edgeset}(\tree)}, \left((w_{\bvertex,j})_{j=1}^{\deg(\bvertex)},(v_{\bvertex,j})_{j=1}^{\operatorname{nonid}(\bvertex)}\right)_{\bvertex\in \vsetboundary(\tree)}\right) \in \polytope(\mathbf{L}).\label{eq:pointinpolytope}
\end{align}
Moreover, each boundary rib is incident to a boundary vertex so, according to Lemma~\ref{lem:bdryvar} and Lemma~\ref{lem:cuspvar}, the gluing of the neighbouring quadrilaterals or quadrilateral-wedge pairs is uniquely determined.
We thus conclude the following.

\begin{proposition}
	For $n \geq 2$ and $\mathbf{L} \in \R_{\geq 0}^n$, associating to $\mathsf{X} \in \mathcal{M}_{0,1+n}(0, \mathbf{L})$ the pair $\spinebijection(\mathsf{X})=(\tree,\lambda)$ consisting of the tree $\tree$ and the point \eqref{eq:pointinpolytope} in the polytope $\polytope(\mathbf{L})$ determines a well-defined injective mapping $\spinebijection : \mathcal{M}_{0,1+n}(0, \mathbf{L}) \to \bigsqcup_{\tree \in \treeset^{\mathrm{all}}_n(\mathbf{L})} \mathcal{A}_{\tree}(\mathbf{L})$. 
\end{proposition}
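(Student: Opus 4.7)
The plan is to assemble the four preceding lemmas (\ref{lem:surfacetocombinatorialtree}--\ref{lem:cuspvar}), which together supply essentially all of the required structure. I would organize the argument into two parts: first verifying that the output $(\tree,\lambda)$ actually lies in the target space $\bigsqcup_{\tree} \mathcal{A}_\tree(\mathbf{L})$, and second establishing injectivity by showing that the data $(\tree,\lambda)$ determines $\mathsf{X}$ up to isometry.

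For the first part, the inclusion $\tree \in \treeset^{\mathrm{all}}_n(\mathbf{L})$ is precisely Lemma~\ref{lem:surfacetocombinatorialtree}. The angle constraints defining the first factor of \eqref{eq:polytopedef} are Lemma~\ref{lem:anglevar} verbatim, with the boundary-start condition $\varphi(\vec{\edge}_{\bvertex,i})=0$ and the inner-start strict positivity $\varphi(\vec{\edge}_{\vertex,i})>0$ both contained in part~(i). The simplex constraints for the remaining factors follow from \eqref{eq:vwsum} and \eqref{eq:wcuspsum}, together with the positivity of the lengths $w_{\bvertex,j}, v_{\bvertex,j}$ built into their definition as hyperbolic lengths of non-degenerate horocyclic segments or log-ratios of strictly ordered radii. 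Well-definedness on the moduli space is automatic, since every ingredient --- the spine $\Sigma$, the arcs in $\Sigma\setminus V$, the triangle angles, and the markings induced on a unit-length horocycle or on a geodesic boundary --- is constructed from intrinsic hyperbolic data and is therefore invariant under isometries of $\check{\mathsf{X}}$.

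For injectivity, my strategy is a reconstruction argument. Given $(\tree,\lambda)$, the partition of Lemma~\ref{lem:surfacepartition} identifies building blocks whose intrinsic geometries are determined by $\lambda$: each quadrilateral $\hypdiamond_\edge$ is the union of two congruent hyperbolic triangles glued along a shared side and is isometric to a unique quadrilateral determined by the pair $(\varphi(\vec{\edge}),\varphi(\cev{\edge}))$; each wedge $\hypwedge_\corner$ is isometric to the standard ideal wedge and carries no further parameters. The combinatorial tree $\tree$ prescribes which sides are to be glued to which. At inner ribs the gluing admits no shearing freedom, because the ribs are half-infinite geodesics ending at the origin, and the angle-sum condition at each $\vertex\in\vsetinner(\tree)$ guarantees that the $2\deg(\vertex)$ triangles meeting at $\vertex$ close up with total angle $2\pi$. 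At each boundary vertex $\bvertex$ the gluing is a genuine shearing degree of freedom, but Lemmas~\ref{lem:bdryvar} and \ref{lem:cuspvar} establish that it is uniquely pinned down by the data $(w_{\bvertex,j},v_{\bvertex,j})$. Reassembling the pieces according to this unique prescription produces a hyperbolic surface isometric to $\check{\mathsf{X}}$, from which $\mathsf{X}$ is recovered as the convex core; hence any two preimages of $(\tree,\lambda)$ represent the same point in $\mathcal{M}_{0,1+n}(0,\mathbf{L})$.

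The only delicate point --- what I expect to be the main obstacle --- is checking that the local reconstructions around inner vertices and around boundary regions glue together into one globally consistent complete hyperbolic surface with the correct topology. The inner-vertex closure is handled by the angle sum; the boundary-vertex closure is the content of the explicit reconstruction in the proof of Lemma~\ref{lem:bdryvar} (and of Lemma~\ref{lem:cuspvar} in the cusp case), which already produces the neighbourhood of $\bvertex$ in $\mathbb{H}$ together with its Fuchsian generator. Once these local lifts are in hand, verifying that they assemble into a Fuchsian group acting on $\mathbb{H}$ with quotient $\check{\mathsf{X}}$ --- and in particular that the Euler characteristic adds up correctly so that no extra topology sneaks in --- is essentially a bookkeeping exercise using \eqref{eq:treeidentity}, and is not expected to raise substantive new difficulties.
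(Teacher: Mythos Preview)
Your proposal is correct and follows essentially the same approach as the paper, which simply collects Lemmas~\ref{lem:surfacetocombinatorialtree}--\ref{lem:cuspvar} in the paragraph preceding the proposition: the first three verify that $(\tree,\lambda)$ lands in the target, and the uniqueness clauses of Lemmas~\ref{lem:bdryvar} and \ref{lem:cuspvar} give injectivity because the building blocks and all gluings are determined. One minor remark: your final paragraph's concern about global consistency of the reconstruction is really a surjectivity issue (whether \emph{arbitrary} data assemble into a valid surface), which the paper addresses separately via the inverse map $\mathsf{Glue}$; for injectivity you only need that two surfaces with the same data are isometric, and your penultimate paragraph already delivers that.
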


\subsection{Bijection proof}

With our description of the mapping $\spinebijection$ in place, it is straightforward to formulate the inverse construction.
Given a $\tree \in \treeset_n^{\mathrm{all}}(\mathbf{L})$ and a point $\lambda \in \mathcal{A}_\tree(\mathbf{L})$, let us construct a hyperbolic surface $\mathsf{X} = \mathsf{Glue}(\tree,\lambda)$.
To this end, we introduce the parameters associated to $\lambda$ as in \eqref{eq:pointinpolytope} and associate to each edge $\edge \in \edgeset(\tree)$ the unique quadrilateral $\hypdiamond_\edge$ composed of congruent triangles with angles $(\varphi(\vec{\edge}),\varphi(\cev{\edge}),0)$ and to each non-ideal corner $\corner \in \cornerset(\tree)$ a wedge $\hypwedge_\corner$.
The incidence relations of $\tree$ prescribe which pairs of sides of these constituents should be glued to result in a rib of the surface.
As explained in the previous section, this gluing is unambiguous for the inner ribs, while the shearing for the boundary ribs is determined by the local structure around the boundary vertices $\vsetboundary(\tree)$.    
The result is a complete hyperbolic surface $\check{\mathsf{X}}$ and its convex core (i.e.\ the surface obtained by amputating funnels at their closed geodesic) is a hyperbolic surface with a distinguished cusp and $n$ labeled geodesic boundaries or cusps, see Figure~\ref{fig:glueconstruction}.

\begin{figure}[h!]
    \centering
    \includegraphics[width=.8\linewidth]{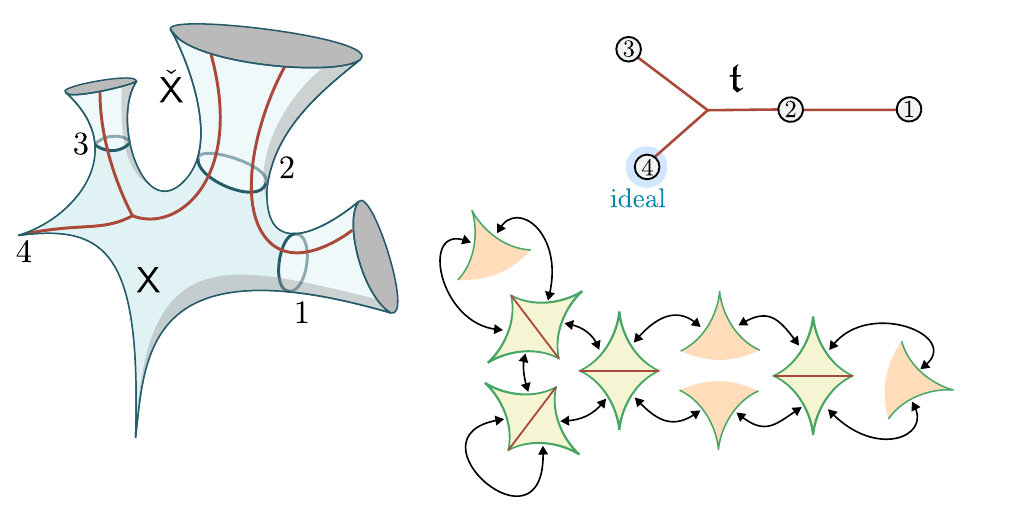}
    \caption{An example of a tree $\tree\in \treeset_n^{\mathrm{all}}(\mathbf{L})$ with $n = 4$ and $L_1,L_2,L_3>0$, $L_4=0$, together with an illustration of the gluing prescription (bottom left) and the resulting surface $\mathsf{X} = \mathsf{Glue}(\tree,\lambda)\in \mathcal{M}_{0,1+n}(\mathbf{L})$. \label{fig:glueconstruction}}
\end{figure}

\begin{lemma}
	This determines a well-defined mapping $\mathsf{Glue} : \bigsqcup_{\tree \in \treeset^{\mathrm{all}}_n(\mathbf{L})} \mathcal{A}_{\tree} \to \mathcal{M}_{0,1+n}(0, \mathbf{L})$.
\end{lemma}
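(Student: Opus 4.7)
The plan is to verify that the surface $\mathsf{X} = \mathsf{Glue}(\tree,\lambda)$ produced by the prescription indeed lies in $\mathcal{M}_{0,1+n}(0,\mathbf{L})$. This breaks naturally into three steps: showing that the constituent pieces exist as hyperbolic polygons and assemble into a smooth complete hyperbolic metric, that the boundary components are of the correct type with the prescribed lengths, and that the underlying surface has genus $0$.

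First I would verify the existence and smooth gluing of the pieces. The triangles making up $\hypdiamond_\edge$ carry angles $(\varphi(\vec{\edge}), \varphi(\cev{\edge}), 0)$ and define valid non-degenerate hyperbolic triangles precisely because the constraints $\varphi(\vec{\edge}_{\vertex,i}) > 0$ and the Delaunay condition $\varphi(\vec{\edge}) + \varphi(\cev{\edge}) < \pi$ are built into $\mathcal{A}_\tree(\mathbf{L})$, while $\hypwedge_\corner$ is an ideal bigon that is unique up to isometry. Along each inner rib the two sides being identified have equal finite length thanks to the mirror-congruence within each $\hypdiamond_\edge$ and the matching angles at the shared inner vertex; along each boundary rib the shearing is uniquely fixed by the $w$- and $v$-parameters via the reverse directions of Lemmas~\ref{lem:bdryvar} and~\ref{lem:cuspvar}. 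At each inner vertex $\vertex$ the total angle around the central point equals $2\sum_{i=1}^{\deg(\vertex)} \varphi(\vec{\edge}_{\vertex,i}) = 2\pi$ by the angle-sum constraint, so the metric is smooth there.

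Next I would check the boundary structure. For a boundary vertex $\bvertex$ with $L_\bvertex > 0$, the identity \eqref{eq:vwsum} ensures that following a cycle of adjacent ribs around $\bvertex$ produces a hyperbolic holonomy of translation length $L_\bvertex$, hence a funnel of perimeter $L_\bvertex$ whose amputation at its closed geodesic yields a geodesic boundary of $\mathsf{X}$ of length $L_\bvertex$; when $L_\bvertex = 0$, the identity \eqref{eq:wcuspsum} analogously produces a parabolic cusp with unit-length horocycle. At the origin, every triangle contributes an ideal vertex at angle $0$, and the horocyclic segments near the origin in the upper-half-plane normalizations of the previous subsection glue end-to-end around the tree into a single closed horocycle, so the origin is a single parabolic cusp. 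Genus $0$ then follows from the observation that $\check{\mathsf{X}}$ minus its ribs deformation retracts onto a horocyclic disc at the origin along the natural rib-parallel geodesics, making the complement of the ribs a punctured disc; reattaching the ribs, whose combinatorics is that of the plane tree $\tree$ together with $n$ boundaries at infinity, yields a surface with one distinguished cusp and $n$ further ends, necessarily of genus $0$.

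The step I expect to be the main obstacle is showing that the origin really is a single parabolic cusp rather than a disjoint collection of cusps or a non-parabolic singularity. Concretely this amounts to verifying that, as one walks around the spine, the horocyclic arcs contributed by successive pieces glue end-to-end into one simple closed horocycle, and that the parabolic holonomies at the origin produced by the various pieces conjugate into a single abelian one. Both reduce to a local-to-global argument using the upper-half-plane normalizations introduced in the proofs of Lemmas~\ref{lem:bdryvar} and~\ref{lem:cuspvar}, propagated along successive inner and boundary vertices using the angle and length constraints of $\mathcal{A}_\tree(\mathbf{L})$.
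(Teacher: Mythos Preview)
Your proposal is correct and follows essentially the same approach as the paper: verify the pieces glue to a genus-$0$ hyperbolic surface, check the $n$ labeled boundaries have the right type and length via the constructions of Lemmas~\ref{lem:bdryvar} and~\ref{lem:cuspvar}, and then argue that the origin is a genuine cusp. The paper's proof of the last point is slightly sharper than your sketch: rather than speaking of horocyclic arcs ``gluing end-to-end'' or of conjugating parabolic holonomies, it fixes a symmetric horocycle pair on one quadrilateral and propagates the choice to neighbouring pieces, observing that at each inner or boundary vertex the propagation is uniquely determined and preserves the symmetry; the crucial point---which you should make explicit---is that because $\tree$ is a \emph{tree}, this propagation encounters no cycles and hence no global consistency obstruction.
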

\begin{proof}
	Because of the planarity of the gluing prescription, $\check{\mathsf{X}}$ is clearly a hyperbolic metric on an $(n+1)$-holed sphere.
	We only need to check that the origin corresponds to a cusp and that the boundary with label $i$ is a cusp if $L_i=0$ or a funnel with the appropriate perimeter when $L_i > 0$.
	The latter conditions are granted by the constructions described in Lemma~\ref{lem:bdryvar} and Lemma~\ref{lem:cuspvar} because they apply to any choice of $(w_{\bvertex,j})_{j=1}^{\deg(\bvertex)}\in \Delta_{\deg(\bvertex)}$ and $(v_{\bvertex,j})_{j=1}^{\operatorname{nonid}(\bvertex)}\in \Delta_{\operatorname{nonid}(\bvertex)}$. 

	In order for the origin to be a cusp, it is sufficient that there exists a matching choice of horocycles for all ideal vertices of the quadrilaterals and wedges that will be identified with the origin.
	This can be achieved by selecting an arbitrary edge $\edge\in \edgeset(\tree)$ and endowing the two ideal vertices of the quadrilateral $\hypdiamond_\edge$ with an arbitrary symmetric choice of horocycles.
	Then iteratively we can adjust the horocycles on the remaining quadrilaterals and wedges as follows.
	For each boundary vertex $\bvertex \in \vsetboundary(\tree)$, a choice of symmetric horocycles on a single incident quadrilateral uniquely determines the remaining horocycles (see Lemma~\ref{lem:bdryvar}), which will again be symmetric for each quadrilateral. 
	Similarly, for each inner vertex $\vertex \in \vsetinner(\tree)$ the horocycles of one of the $\deg(v)$ quadrilaterals determines uniquely a choice of symmetric horocycles for all them.
	Since $\tree$ has the structure of a tree, the iteration determines unique matching horocycles everywhere.
\end{proof}

To prove Theorem~\ref{thm:fullbijection} it remains to show that $\mathsf{Glue}$ and $\spinebijection$ are inverses of each other.

\begin{proof}[Proof of Theorem~\ref{thm:fullbijection}]
	That $\mathsf{Glue}\circ \spinebijection(X) = X$ for every $\mathsf{X} \in \mathcal{M}_{0,1+n}(0,\mathbf{L})$ follows directly from the construction of $\mathsf{Glue}$ and our discussion in the last subsection.
	It remains to verify that for $\mathsf{X} = \mathsf{Glue}(\tree,\lambda)$ the spine construction gives $\spinebijection(X) = (\tree, \lambda)$ again.
	In particular, we need to convince ourselves that the subset $\Sigma\subset \check{\mathsf{X}}$ given by the union of diagonals of the quadrilaterals $\hypdiamond_\edge$ for $\edge\in \edgeset(\tree)$ together with the points corresponding to the inner vertices is the cut-locus of $\check{\mathsf{X}}$.
	
	By the gluing prescription, $\check{\mathsf{X}} \setminus \Sigma$ has convex boundary and is topologically a punctured disk. 
	Therefore, every point $x \in \check{\mathsf{X}} \setminus \Sigma$ admits a unique geodesic to the origin contained in $\check{\mathsf{X}} \setminus \Sigma$.
	Moreover, for every $x \in \Sigma$ there are a multiple such geodesics that are contained in $\check{\mathsf{X}} \setminus \Sigma$ except for their endpoint and all of them have the same length.
	Let now $x \in \check{\mathsf{X}}$ and $\gamma$ be a geodesic from $x$ to the origin that intersects $\Sigma$ away from its starting point.
	Then letting $y \in \Sigma$ be the last such intersection, we know that the final segment of $\gamma$ is a geodesic from $y$ to the origin that is contained in $\check{\mathsf{X}} \setminus \Sigma$.
	We can thus find another path from $x$ to the origin of the same length by replacing this final segment by one of the other such geodesics starting at $y$.
	But by construction this path is not geodesic because of the bend at $y$, showing that $\gamma$ is not of minimal length.
	This proves that the geodesics to the origin in $\check{\mathsf{X}} \setminus \Sigma$ are length minimizing, and therefore that $\Sigma$ is the cut-locus. 

	Hence, the combinatorial tree identified in the spine construction $\spinebijection(X)$ is precisely $\tree$, the quadrilaterals $(\hypdiamond_\edge)_{\edge\in \edgeset(\tree)}$ and wedges $(\hypwedge_\corner)_{\corner\in\cornerset(\tree)}$ are exactly the ones appearing in the partition of $\check{\mathsf{X}}$ described in Lemma~\ref{lem:surfacepartition}.
	Since the tree labels are intrinsic to the geometry of the quadrilaterals or the funnel/cusp region, they clearly match $\lambda$. 
\end{proof}

\section{Weil--Petersson Poisson structure and measure}

We start with some background on the Weil-Petersson structures on Teichm\"uller space.
Let $S$ be a fixed surface of genus $0$ with $n+1$ boundaries labeled $0,\ldots,n$.
For $\mathbf{L} \in \R^{n+1}_{\geq 0}$, the \emph{Teichm\"uller space} $\mathcal{T}_{0,n+1}(\mathbf{L})$ consists of pairs $(\mathsf{X},f)$ where $\mathsf{X}$ is a hyperbolic surface with geodesic boundaries and/or cusps and where $f : S \to X$ is an orientation-preserving diffeomorphism such that for each $i=0,\ldots,n$, $f$ shrinks the $i$th boundary to a cusp if $L_i=0$ or maps the boundary to a geodesic boundary of $\mathsf{X}$ of length $L_i > 0$.
Two pairs $(\mathsf{X},f)$ and $(\mathsf{Y},g)$ are regarded equivalent, if there exists an isometry $h : \mathsf{X} \to \mathsf{Y}$ such that $h \circ f : S \to \mathsf{Y}$ is isotopic to $g : S \to \mathsf{Y}$ (allowing boundary reparametrization).

The Teichm\"uller space $\mathcal{T}_{0,n+1}(\mathbf{L})$ comes with a canonical symplectic structure given by the Weil-Petersson 2-form $\omega_\WP$.
It takes on a particularly simple form \cite{Wolpert_symplectic_1983} in Fenchel-Nielsen coordinates associated to a pants decomposition of $S$,
\begin{align}
	\omega_{\WP} = \sum_{\gamma} \rmd \ell_\gamma \wedge \rmd \tau_\gamma
\end{align}
involving the lengths $\ell_\gamma$ and twists $\tau_\gamma$ associated to the closed geodesics $\gamma$ of the pants decomposition.
The associated Weil-Petersson Poisson structure is determined by the bivector
\begin{align}
	\pi_{\WP} = \sum_{\gamma} \frac{\partial}{\partial\ell_\gamma} \wedge \frac{\partial}{\partial\tau_\gamma},\label{eq:poissonwp}
\end{align}
Both the symplectic and the Poisson structure are mapping class group invariant, and therefore descend to moduli space $\mathcal{M}_{0,1+n}(\mathbf{L})$.
However, these expressions are not very useful for our purposes because the Fenchel-Nielsen coordinates are not easily related to the tree decorations.

In \cite{budd2025random} it was observed that the Penner coordinates \cite{Penner_decorated_1987} on $\mathcal{T}_{0,1+n}(\mathbf{0})$, associated to carefully chosen triangulations of the cusped surfaces, are adapted well to the tree decoration.
Although this coordinate system does not generalize easily to the case of geodesic boundaries, the closely related \emph{shear coordinates} do.
As we will explain in the next sections, it is more convenient to work with the Weil-Petersson Poisson structure than with the symplectic structure, because the former has a simple expression in terms of shears on Teichm\"uller space and in terms of the tree decoration $\mathcal{A}_\tree(\mathbf{L})$.

\subsection{Thurston--Fock shear coordinates}

A triangulation $\triangulation$ of $S$ is a maximal set of disjoint arcs starting and ending on the boundaries such that each connected component of the complement is a hexagon with three sides corresponding to arcs interleaved with three sides corresponding to segments of the boundary, see the left-hand side of Figure~\ref{fig:sheartriangulation} for an example.

\begin{figure}[h!]
    \centering
    \includegraphics[width=\linewidth]{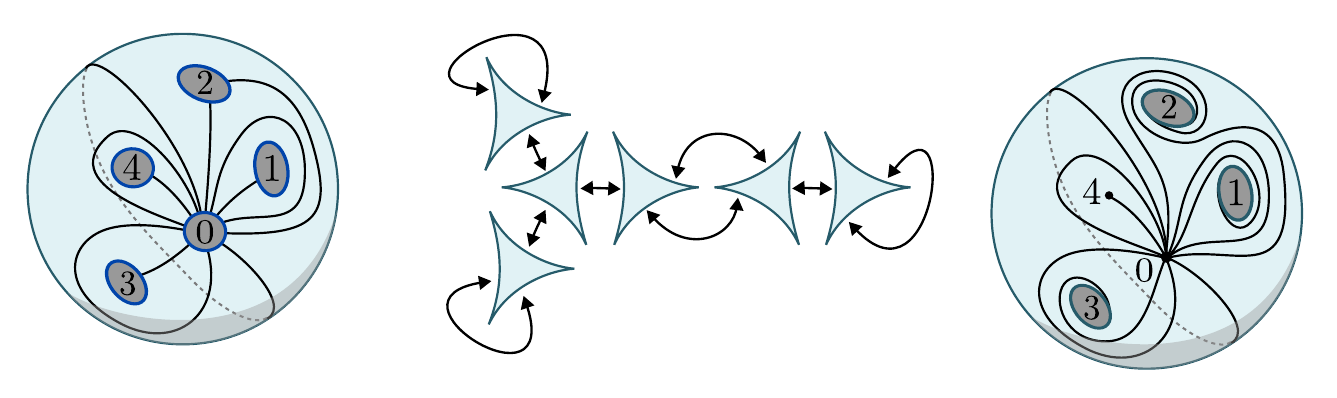}
    \caption{Given a triangulation of $S$ and a hyperbolic surface $\mathsf{X}$, one can consider the ideal triangulation of $\mathsf{X}$ consisting of arcs that spiral into to the geodesic boundaries.\label{fig:sheartriangulation}}
\end{figure}

Given a triangulation $\triangulation$ and a hyperbolic surface $(\mathsf{X},f)\in \mathcal{T}_{0,n+1}(\mathbf{L})$, each arc $\alpha \in \triangulation$ determines an arc $f(\alpha)$ on $\mathsf{X}$ starting and ending on a geodesic boundary or cusp.
The homotopy class of such arcs, where one allows sliding the endpoints along the boundaries, contains a unique geodesic representative $\gamma^\perp_\alpha$ that meets the boundaries perpendicularly at the endpoints (or runs into the cusp).
This gives a canonical tiling of $\mathsf{X}$ by right-angled hyperbolic hexagons (or hyperbolic $k$-gons with $6-k$ ideal vertices and $2k-6$ right angles, for $k=3,4,5$, depending on the number of adjacent cusps).   
But one can also canonically associate to $\alpha$ a geodesic $\gamma^{\mathcal{S}}_\alpha$ on $\mathsf{X}$ that spirals into the boundaries at the endpoints of $\alpha$ in a clockwise fashion (of course no spiralling happens when it ends at a cusp). 
Informally, we think of $\gamma^{\mathcal{S}}_\alpha$ as the limiting geodesic obtained from $\gamma^\perp_\alpha$ by moving both endpoints clockwise along the boundary at constant rate, see the right-hand side of Figure~\ref{fig:sheartriangulation}. 
The result is a tiling of $\mathsf{X}$ by $2n-2$ ideal triangles, see Figure~\ref{fig:pantsexample} for the simple case of $n=2$ corresponding to a pair of pants.

\begin{figure}[h!]
    \centering
    \includegraphics[width=\linewidth]{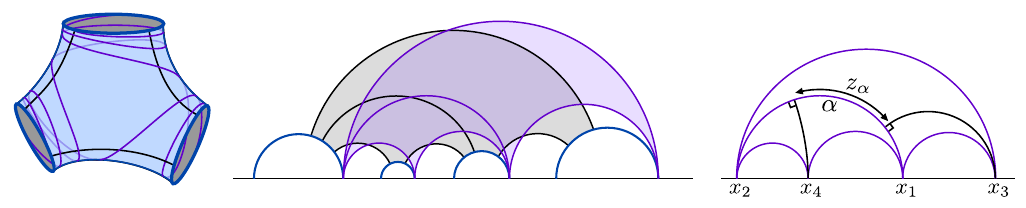}
    \caption{Left: example surface $\mathsf{X}$ in the case $n=2$, corresponding to a pair of pants, with the perpendicular geodesics $\gamma_\alpha^{\perp}$ of a triangulation in black and the spiralling geodesics $\gamma_\alpha^{\mathcal{S}}$ in purple. Middle: the corresponding fundamental domains for $\mathsf{X}$ in $\mathbb{H}$ consisting either of a pair of right-angled hexagons (black) or a pair of ideal triangles (purple). Right: the definition of the shear coordinate as a signed hyperbolic distance along $\alpha$. In this example $z_\alpha < 0$. \label{fig:pantsexample}}
\end{figure}

This ideal triangulation gives rise to the Thurston-Fock shear coordinates on $\mathcal{T}_{0,n+1}(\mathbf{L})$, first introduced by Thurston \cite{Thurston_Minimal_1998} and generalized by Fock \cite{Fock_Description_1993}. 
See \cite[Section~2.4]{Penner_Decorated_2012} for a recent overview.
The shear $z_\alpha$ associated to the arc $\alpha \in \triangulation$ can be defined as follows. 
The geodesic $\gamma^{\mathcal{S}}_\alpha$ on $\mathsf{X}$ is adjacent to a pair of ideal triangles, see right-hand side of Figure~\ref{fig:pantsexample}. 
If $x_1,x_2,x_3,x_4 \in \R\cup \{\infty\}$ are the positions of the ideal vertices of these triangles lifted to $\mathbb{H}$, such that $\alpha$ points from $x_1$ to $x_2$ with $x_3$ on the right and $x_4$ on the left, then the shear is the logarithmic cross ratio
\begin{align}
	z_\alpha = \log\frac{(x_3-x_1)(x_4-x_2)}{(x_3-x_2)(x_1-x_4)}.
\end{align}
Equivalently, $z_\alpha$ can be understood as the signed hyperbolic distance along $\gamma^{\mathcal{S}}_\alpha$ between the orthogonal projections onto $\gamma^{\mathcal{S}}_\alpha$ of the opposite vertices (corresponding to $x_3$ and $x_4$).
By \cite[Theorem~4.4]{Penner_Decorated_2012}, the mapping $(\mathsf{X},f) \mapsto (z_\alpha)_{\alpha\in \triangulation}$ is a real-analytic homeomorphism
\begin{align}
	\mathsf{Shear} : \mathcal{T}_{0,n+1}(\mathbf{L}) \to \mathcal{Z}_{\triangulation}(\mathbf{L}) = \left\{ (z_\alpha)_{\alpha\in \triangulation} \in \R^{3n-3} : \sum_{j} z_{\alpha_{i,j}} = L_i\text{ for }i=0,\ldots,n\right\} \subset \R^{3n-3}, \label{eq:shearcoor}
\end{align}
where for each $i$ the sum runs over the arcs $\alpha_{i,1},\alpha_{i,2},\ldots\in\triangulation$ that start at the $i$th boundary (counting twice the arcs that start and end there).
Since the shears obey $n+1$ independent constraints, $\mathcal{Z}(\mathbf{L})$ has dimension $2n-4$.

We have seen that the Weil-Petersson symplectic form is easily expressed in Fenchel-Nielsen coordinates.
In the case of shear coordinates, however, it is the dual Weil-Petersson Poisson structure that admits a simple expression.
Fock introduced \cite{Fock_Dual_1997} (see also \cite[Definition~4.9]{Penner_Decorated_2012}) the bivector\footnote{Note that the factor of $\frac{1}{2}$ does not appear in \cite{Fock_Dual_1997} or \cite{Penner_Decorated_2012}. Several different conventions are used in the literature for the normalization of the Weil-Petersson symplectic or Poisson structure. Our choice is the one compatible with the symplectic structure $\omega_{\mathrm{WP}} = -2 \sum_{\text{triangles}} \left(\rmd \log \lambda_\alpha \wedge \rmd \log \lambda_\beta +\cdots\right)$ in Penner's lambda-lengths in the case of punctured surfaces, as well as the Weil-Petersson volume computations of Mirzakhani \cite{Mirzakhani2007}. Note for instance that \cite[Theorem~3.3.6]{Penner_Weil_1992} and \cite[Equation~(2.8)]{Chekhov_Fenchel_2020} differ by a factor of $2$.} 
\begin{align}
	\pi_{\triangulation} = \frac{1}{2} \sum_{\text{triangles}} \left(\frac{\partial}{\partial z_{\alpha}} \wedge \frac{\partial}{\partial z_{\beta}} + \frac{\partial}{\partial z_{\beta}} \wedge \frac{\partial}{\partial z_{\gamma}}+ \frac{\partial}{\partial z_{\gamma}} \wedge \frac{\partial}{\partial z_{\alpha}}\right)\label{eq:thurstonfockbivector}
\end{align}
on $\mathcal{Z}_{\triangulation}(\mathbf{L})$, where the sum runs over the triangles of $\triangulation$ and the bounding arcs of each triangle are $\alpha$, $\beta$, $\gamma$ in clockwise order.
It was later \cite{Fock_Quantum_1999,Chekhov_Quantizing_2004} demonstrated to agree with the Weil-Petersson Poisson structure $\pi_{\WP}$ from \eqref{eq:poissonwp}.

\subsection{Relation to the spine construction}\label{sec:spinetoshear}

From here on we restrict to the case $L_0=0$ and $\mathbf{L} = (L_1,\ldots,L_n) \in \R_{\geq 0}^n$. 
We now explain how to associate to any hyperbolic surface $(\mathsf{X},f)\in \mathcal{T}_{0,n+1}(0,\mathbf{L})$ a canonical triangulation of $S$, in a similar spirit to \cite[Section~3]{budd2025random} but using the generalized spine and shear coordinates instead of Penner coordinates.

Recall the construction of the spine $\Sigma \subset \check{\mathsf{X}}$ of Lemma~\ref{lem:spineproperties} and the associated partition of $\check{\mathsf{X}}$ into quadrangles and wedge of Lemma~\ref{lem:surfacepartition}.
For each edge $\edge \in \edgeset(\tree)$ we take an arc in the quadrangle $\hypdiamond_\edge$ crossing $\edge$ and starting and ending at the origin. 
For each corner $\corner$ of a boundary vertex $\bvertex$ we draw an arc from the origin to the boundary of $\mathsf{X}$ corresponding to $\bvertex$: we can take it to be the intersection of $\mathsf{X}$ with the corresponding boundary rib if $\corner$ is ideal, or with any geodesic in the wedge $\hypwedge_\corner$ if $\corner$ is non-ideal. 
By construction all these arcs are disjoint, but the ones of the first type are not necessarily all contained in $\mathsf{X} \subset \check{\mathsf{X}}$.
However, we may shorten them (e.g.\ to the unique geodesic representative in their homotopy classes) without introducing intersections, such that they are contained in $\mathsf{X}$.
If all inner vertices are of degree $3$ then by pulling the arcs back to $S$ via $f^{-1}$ we obtain a triangulation of $S$ that we denote by $\operatorname{\triangulation}(\mathsf{X},f)$.
If $\tree$ has an inner vertex $\vertex \in \vsetinner(\tree)$ of degree $\deg(\vertex) = k > 3$, then the arcs associated to the $k$ quadrangles surrounding $\vertex$ bound a $k$-gon, but we can choose to triangulate this $k$-gon in an arbitrary deterministic fashion depending only on the combinatorics of $\tree$.
Hence, the mapping $(\mathsf{X},f) \to \operatorname{\triangulation}(\mathsf{X},f)$ can be made well-defined on all of $\mathcal{T}_{0,n+1}(0,\mathbf{L})$, in such a way that the combinatorics of the triangulation depends only on the isometry class, meaning that for two different markings $f,g : S\to X$ we have that 
\begin{align}
\operatorname{\triangulation}(X,g) = g^{-1} \circ f (\operatorname{\triangulation}(\mathsf{X},f)).\label{eq:triangMCG}
\end{align}

For every $\tree\in\treeset^{\mathrm{all}}(\mathbf{L})$, one may canonically associate shear coordinates to $\mathcal{A}_\tree(\mathbf{L})$ as follows.
Since $\mathcal{A}_\tree(\mathbf{L})$ is nonempty, we can take a representative $\mathsf{X} \in \mathsf{Glue}(\tree,\mathcal{A}_\tree(\mathbf{L})) \subset \mathcal{M}_{0,1+n}(\mathbf{L})$ and a marking $f: S\to X$ and let $\triangulation_\tree = \operatorname{\triangulation}(\mathsf{X},f)$.
Then Theorem~\ref{thm:fullbijection} implies that $\mathsf{Glue}$ bijectively relates $\mathcal{A}_\tree(\mathbf{L})$ to the set $\operatorname{\triangulation}^{-1}(\triangulation_\tree) \subset \mathcal{T}_{0,n+1}(\mathbf{L})$ of marked surfaces sharing the same canonical triangulation.
Composition with the shear coordinates \eqref{eq:shearcoor} thus gives an injective mapping 
\begin{align}
	\mathsf{Shear}_\tree : \mathcal{A}_\tree(\mathbf{L}) \to \mathcal{Z}_{\triangulation_\tree}(0,\mathbf{L}) \subset \R^{3n-3}.
\end{align}
Thanks to \eqref{eq:triangMCG}, when viewing the shears as indexed by the edges of the combinatorial triangulation, this mapping is independent of the choice of representative $(\mathsf{X},f)$.

\begin{figure}[h!]
    \centering
    \includegraphics[width=.9\linewidth]{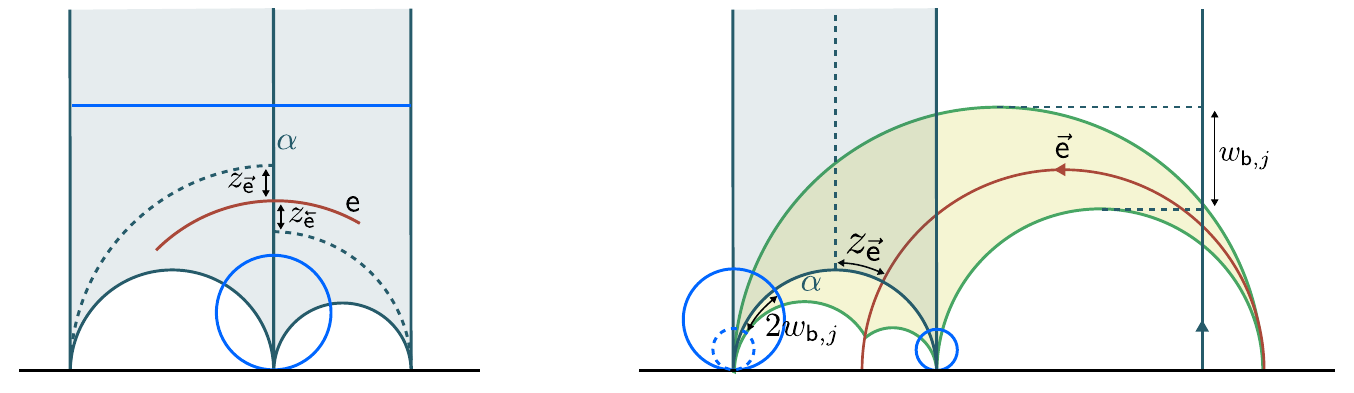}
    \caption{Left: the half-shears measure the (signed) distances from the orthogonal projections onto the midpoint of $\alpha$ (in this illustration they are negative). Right: illustration in $\mathbb{H}$ of the case that $\vec{\edge}$ starts at a boundary vertex. In this case the arc in ${\protect\triangulation_\tree}$ dual to $\edge$ is adjacent to an ideal triangle (in purple) with a vertex at $\infty$. The half-shear $z_{\vec{\edge}}$ is equal to $w_{\bvertex,j}$.\label{fig:shearw}}
\end{figure}

Let us identify the mapping $\mathsf{Shear}_\tree$ explicitly in terms of the coordinates of $\mathcal{A}_\tree(\mathbf{L})$.
For this it is convenient to introduce the \emph{half-shear} $z_{\vec{\edge}}$ associated to an orientation of an edge $\edge\in\edgeset(\tree)$ of the spine. 
If $\alpha$ is the geodesic arc associated to the quadrangle $\hypdiamond_\edge$, then $\alpha$ comes with a canonical marking at its midpoint, measured with respect to the horocycle at the origin. 
In case $\alpha$ and $\edge$ intersect, this corresponds precisely to their intersection point (see left side of Figure~\ref{fig:shearw}).  
We consider the ideal triangle adjacent to $\alpha$ on the side corresponding to starting point of $\vec{\edge}$.
Then the half-shear $z_{\vec{\edge}}$ is the signed hyperbolic distance along $\alpha$ between its midpoint and the orthogonal projection onto $\alpha$ of the opposite vertex of this ideal triangle. 
It should be clear that then the shear $z_\edge$ of $\alpha$ equals the sum of the half-shears of the orientations of $\edge$,
\begin{align}
	z_\edge = z_{\vec{\edge}} + z_{\cev{\edge}}.
\end{align}
The expression for $z_{\vec{\edge}}$ depends on the type of vertex at which $\vec{\edge}$ starts.

If $\vec{\edge}$ starts at a boundary vertex $\bvertex$ with $L_\bvertex >0$, meaning that $\vec{\edge} = \vec{\edge}_{\bvertex,j}$ for some $j$, then we may examine the quadrangle $\hypdiamond_\edge$ in the funnel region as in Figure~\ref{fig:treebijectionboundary}.
The right side of Figure~\ref{fig:shearw} illustrates this quadrangle as well as the associated ideal triangle, which has a vertex at $\infty$ because the two bounding arcs are asymptotically parallel to the boundary geodesic, i.e.\ the vertical axis in this picture.
The orthogonal projection of this vertex onto $\alpha$ is therefore simply the top of the semicircle.
Since the boundary ribs of $\hypdiamond_\edge$ correspond to semicircles whose radii differ by a factor $e^{w_{\bvertex,j}}$,
the horocycle $h$ at the origin lifts to a pair of horocycles that differ in size by a hyperbolic distance $2 w_{\bvertex,j}$.
It follows directly that the midpoint of $\alpha$ is at (signed) distance
\begin{align}
	z_{\vec{\edge}} = w_{\bvertex,j}\label{eq:shearw}
\end{align}
from the top.

If $\vec{\edge}$ starts at a boundary vertex $\bvertex$ corresponding to a cusp, $L_\bvertex = 0$, then the corresponding ideal triangle in $\triangulation_\tree$ aligns with the boundary ribs of $\hypdiamond_\edge$.
Due to symmetry (see Figure \ref{fig:treebijectioncusp}) the half-shear $z_{\vec{\edge}} = 0$ vanishes in this case.

\begin{figure}[h!]
    \centering
    \includegraphics[width=.7\linewidth]{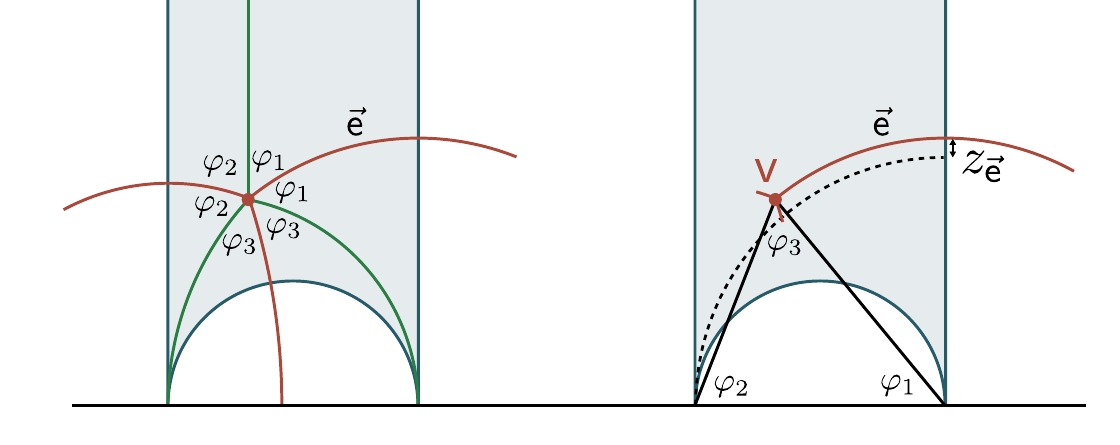}
    \caption{Here we use the abbreviated notation $\varphi_j = \varphi(\vec{\edge}_{\vertex,j})$, $j=1,2,3$.\label{fig:sheartriangle}}
\end{figure}

If $\vec{\edge}$ starts at an inner vertex $\vertex\in \vsetinner(\tree)$ of degree $3$, let us assume for concreteness that $\vec{\edge} = \vec{\edge}_{\vertex,1}$ and examine the angles $\varphi(\vec{\edge}_{\vertex,j})$, $j=1,2,3$, around $\vertex$. 
If we draw the arc dual to $\vec{\edge}$ vertically in $\mathbb{H}$, then the ideal triangle dual to $\vertex$ looks like the one in Figure~\ref{fig:sheartriangle}.
Elementary trigonometry shows that $\varphi(\vec{\edge}_{\vertex,j})$, $j=1,2,3$ are precisely the angles of the Euclidean triangle formed by $\vertex$ and the two ideal vertices on the horizontal axis.
In particular, $e^{z_{\vec{\edge}}}$ is the ratio of lengths of the right side to the bottom side.
Hence, the sine law tells us that
\begin{align}
	z_{\vec{\edge}_{\vertex,1}} = \log\frac{\sin\varphi(\vec{\edge}_{\vertex,2})}{\sin\varphi(\vec{\edge}_{\vertex,3})}.
\end{align}
Of course, this formula holds more generally under cyclic permutation of the indices $1,2,3$. 

\begin{figure}[h!]
    \centering
    \includegraphics[width=.6\linewidth]{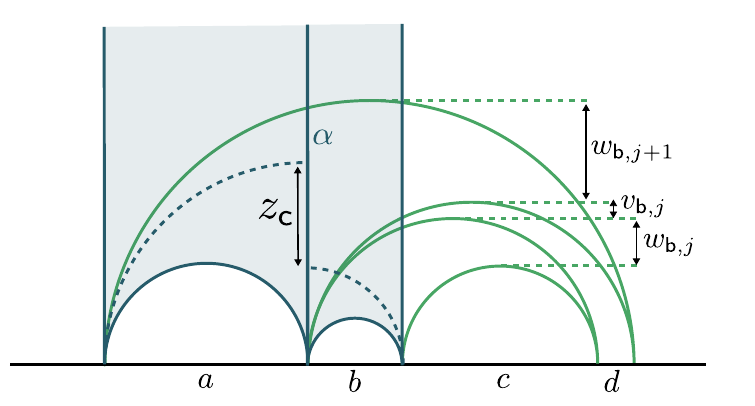}
    \caption{To compute the shear associated to an arc $\alpha$ that spirals into a boundary, we examine the construction of Figure~\ref{fig:treebijectionboundary-reconstruct}.
	The lengths of the segments on the horizontal axis obey \eqref{eq:intervalsforshear}.
	\label{fig:shearv}}
\end{figure}

For each corner $\corner$ of a boundary vertex $\bvertex$ we have a geodesic arc $\alpha$ spiralling into the boundary associated to $\bvertex$.
Assuming $\corner$ is non-ideal, the construction of Lemma~\ref{lem:bdryvar} and Figure~\ref{fig:treebijectionboundary-reconstruct} allows us to establish the geometry of the two neighbouring ideal triangles, see Figure~\ref{fig:shearv}.
The hyperbolic distances $z_\corner$, $w_{\bvertex,j}$, $v_{\bvertex,j}$ and $w_{\bvertex,j+1}$ can easily be expressed in terms of the segment lengths $a,b,c,d$ indicated in the figure as
\begin{align}
	e^{z_\corner} = \frac{b}{a}, \quad e^{w_{\bvertex,j}} = \frac{b+c}{c},\quad e^{v_{\bvertex,j}} = \frac{b+c+d}{b+c},\quad e^{w_{\bvertex,j+1}} = \frac{a+b+c+d}{b+c+d}.\label{eq:intervalsforshear}
\end{align}
Solving for $z_\corner$ gives 
\begin{align}
	z_\corner = -v_{\bvertex,j} + \log \frac{1-e^{-w_{\bvertex,j}}}{e^{w_{\bvertex,j+1}}-1}.\label{eq:shearv}
\end{align}
This formula remains valid if $\corner$ is ideal as long as we admit that $v_{\bvertex,j} = 0$ by convention.
In case $L_{\bvertex} = 0$, we can refer to Figure~\ref{fig:treebijectioncusp} to see that $z_\corner = \log(w_{\bvertex,j}/w_{\bvertex,j+1})$.

\begin{lemma}
	$\mathsf{Shear}_\tree$ is real-analytic for each $\tree\in\treeset^{\mathrm{all}}_n(\mathbf{L})$. Its image is of full dimension in $\mathcal{Z}_{\triangulation_\tree}(0,\mathbf{L})$ if and only if $\tree \in \treeset_n(\mathbf{L})$.
\end{lemma}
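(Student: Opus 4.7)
My plan is to combine the explicit formulas for half-shears established in the preceding subsection with a dimension count and an application of invariance of domain.

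For real-analyticity, I would go through the three cases worked out just above the lemma: $z_{\vec{\edge}}=w_{\bvertex,j}$ at a positive-length boundary vertex, $z_{\vec{\edge}}=0$ at a cusp, $z_{\vec{\edge}_{\vertex,1}}=\log(\sin\varphi(\vec{\edge}_{\vertex,2})/\sin\varphi(\vec{\edge}_{\vertex,3}))$ at a trivalent inner vertex, and \eqref{eq:shearv} for the shear across an arc spiralling into a boundary (together with the cusp analogue $\log(w_{\bvertex,j}/w_{\bvertex,j+1})$). On $\mathcal{A}_\tree(\mathbf{L})$ the relevant quantities are strictly positive: $\varphi\in(0,\pi)$ so $\sin\varphi>0$, and $w_{\bvertex,j},v_{\bvertex,j}>0$ so $1-e^{-w_{\bvertex,j}}>0$ and $e^{w_{\bvertex,j+1}}-1>0$. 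Hence every expression is a composition of $\log$, $\exp$, $\sin$ and rational operations evaluated on their domains of analyticity, and summation $z_\edge = z_{\vec{\edge}}+z_{\cev{\edge}}$ preserves this. When an inner vertex has degree $k>3$, the auxiliary arcs inserted to triangulate the surrounding $k$-gon contribute shears that are analytic functions of the positions of the ideal vertices of the surrounding ideal triangles, which are themselves analytic in the angle coordinates; so analyticity persists.

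For the ``only if'' direction of the dimension statement, $\mathcal{Z}_{\triangulation_\tree}(0,\mathbf{L})$ is an affine subspace of $\R^{3n-3}$ cut out by $n+1$ linear constraints, hence a manifold of dimension $2n-4$. For $\tree\notin \treeset_n(\mathbf{L})$ at least one summand in \eqref{eq:dimpolytope} is strictly negative, so $\dim \mathcal{A}_\tree(\mathbf{L})<2n-4$; the image of a real-analytic map cannot exceed the dimension of its source, so full-dimensionality fails.

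For the ``if'' direction, assume $\tree\in \treeset_n(\mathbf{L})$, so that source and target are both topological manifolds of dimension $2n-4$. The map $\mathsf{Shear}_\tree = \mathsf{Shear}\circ \mathsf{Glue}(\tree,\cdot)$ is injective (as set up in Section~\ref{sec:spinetoshear} using Theorem~\ref{thm:fullbijection} together with the homeomorphism property of $\mathsf{Shear}$ on Teichm\"uller space) and continuous, so Brouwer's invariance of domain theorem gives that its image is open in $\mathcal{Z}_{\triangulation_\tree}(0,\mathbf{L})$, in particular of full dimension $2n-4$. The only step that requires a moment's care is verifying the hypotheses of invariance of domain, i.e.\ that both $\mathcal{A}_\tree(\mathbf{L})$ (a Cartesian product of an open polytope and open simplices) and $\mathcal{Z}_{\triangulation_\tree}(0,\mathbf{L})$ (an open affine slice) are genuine topological manifolds of matching dimension, which is immediate from their definitions. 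A more hands-on alternative would be to compute the Jacobian of $\mathsf{Shear}_\tree$ at a convenient point, but invariance of domain bypasses this bookkeeping entirely.
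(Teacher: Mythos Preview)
Your proposal is correct and follows essentially the same approach as the paper: the explicit half-shear formulas for the three vertex types, plus the dimension count \eqref{eq:dimpolytope} versus $\dim\mathcal{Z}_{\triangulation_\tree}(0,\mathbf{L})=2n-4$, plus injectivity. The only difference is cosmetic: where the paper tersely writes ``together with injectivity and analyticity, this verifies the claim,'' you spell out that the mechanism is Brouwer's invariance of domain applied to an injective continuous map between equal-dimensional manifolds, which is exactly the content being invoked.
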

\begin{proof}
	Assuming all inner vertices are of degree $3$, we have provided explicit formulas above for the shear coordinates comprising $\mathsf{Shear}_\tree$, which are analytic by inspection.
	In case of an inner vertex $\vertex$ of degree $k>3$, the associated geodesic arcs form a triangulated ideal $k$-gon.
	It is straightforward to check that the shears of the diagonals and the half-shears of the sides of this $k$-gon are analytic functions of the $k$ angles $\varphi(\vec{\edge}_{\vertex,j})$, $j=1,\ldots,k$.
	We omit the details, because we won't need the formulas in the following.

	Recall that $\dim \mathcal{Z}(0,\mathbf{L}) = 2n-4$.
	Comparing with the dimension of $\mathcal{A}_\tree(\mathbf{L})$ given in \eqref{eq:dimpolytope}, we see that $\dim \mathcal{Z}_{\triangulation_\tree}(0,\mathbf{L}) = \dim \mathcal{A}_\tree(\mathbf{L})$ if and only if $\tree \in \treeset_n(\mathbf{L})$.
	Together with injectivity and analyticity, this verifies the claim.
\end{proof}

\begin{proposition}\label{prop:fullwpmeasure}
The subset $\mathcal{M}_{0,1+n}^\circ(0,\mathbf{L}) \coloneqq \mathsf{Glue}(\bigsqcup_{\tree\in \treeset_n(\mathbf{L})}\mathcal{A}_{\tree}(\mathbf{L})) \subset \mathcal{M}_{0,1+n}(0,\mathbf{L})$ is of full Weil-Petersson measure.
\end{proposition}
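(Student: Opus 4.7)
The plan is to show that the complement $\mathcal{M}_{0,1+n}(0,\mathbf{L})\setminus\mathcal{M}_{0,1+n}^\circ(0,\mathbf{L})$ has vanishing Weil-Petersson measure. By Theorem~\ref{thm:fullbijection} this complement is the finite union $\bigcup_{\tree\in\treeset^{\mathrm{all}}_n(\mathbf{L})\setminus\treeset_n(\mathbf{L})} \mathsf{Glue}(\mathcal{A}_\tree(\mathbf{L}))$; finiteness of $\treeset^{\mathrm{all}}_n(\mathbf{L})$ follows from the Euler identity \eqref{eq:treeidentity}, which together with the lower bound $\deg(\vertex)\geq 3$ at inner vertices bounds the number of inner vertices in terms of $n$. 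It therefore suffices to show each piece $\mathsf{Glue}(\mathcal{A}_\tree(\mathbf{L}))$ with $\tree\notin\treeset_n(\mathbf{L})$ is Weil-Petersson null in $\mathcal{M}_{0,1+n}(0,\mathbf{L})$.

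First I would pass to Teichmüller space $\mathcal{T}_{0,n+1}(0,\mathbf{L})$. The preimage of $\mathsf{Glue}(\mathcal{A}_\tree(\mathbf{L}))$ under the mapping-class-group quotient is the orbit of the subset
\begin{align*}
	\widetilde{\mathcal{U}}_\tree = \{(\mathsf{X},f) \in \mathcal{T}_{0,n+1}(0,\mathbf{L}) : \operatorname{\triangulation}(\mathsf{X},f) = \triangulation_\tree\},
\end{align*}
i.e.\ a countable union of MCG-translates of $\widetilde{\mathcal{U}}_\tree$. Since the Weil-Petersson measure on moduli space is the pushforward of the MCG-invariant Weil-Petersson measure on Teichmüller space, it is enough to show that $\widetilde{\mathcal{U}}_\tree$ has Weil-Petersson measure zero in $\mathcal{T}_{0,n+1}(0,\mathbf{L})$.

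Next, I would transfer to Thurston-Fock shear coordinates. The map $\mathsf{Shear}$ of \eqref{eq:shearcoor} is a real-analytic homeomorphism from $\mathcal{T}_{0,n+1}(0,\mathbf{L})$ onto the $(2n-4)$-dimensional real-analytic manifold $\mathcal{Z}_{\triangulation_\tree}(0,\mathbf{L})\subset\R^{3n-3}$. By Section~\ref{sec:spinetoshear}, it sends $\widetilde{\mathcal{U}}_\tree$ bijectively onto the image of the real-analytic injection $\mathsf{Shear}_\tree$, and by the preceding lemma this image has dimension $\dim\mathcal{A}_\tree(\mathbf{L}) < 2n-4$ whenever $\tree\notin\treeset_n(\mathbf{L})$, cf.~\eqref{eq:dimpolytope}.

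Finally, the Weil-Petersson measure, viewed through $\mathsf{Shear}$, is absolutely continuous with respect to the Lebesgue measure on $\mathcal{Z}_{\triangulation_\tree}(0,\mathbf{L})$ with a smooth, nowhere-vanishing density. This follows from the smoothness and non-degeneracy of the symplectic form $\omega_\WP$ dual to the Fock bivector \eqref{eq:thurstonfockbivector}, whose top wedge power is therefore a smooth, nowhere-vanishing $(2n-4)$-form. Any real-analytic image of strictly smaller dimension is then Lebesgue-null, and in particular $\mathsf{Shear}(\widetilde{\mathcal{U}}_\tree)$ is Weil-Petersson null, yielding the claim. The main (mild) obstacle is the absolute continuity statement; it is classical and can be justified either by appealing to the identification of $\pi_\triangulation$ with the Weil-Petersson Poisson bivector via the works of Fock and Chekhov cited above, or by a direct change-of-variables computation from Fenchel-Nielsen to shear coordinates.
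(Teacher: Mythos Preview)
Your proposal is correct and follows essentially the same approach as the paper: the paper's proof is a terse two-sentence version of your argument, invoking the preceding lemma to conclude that the complement is covered by (locally finitely many) real-analytic hypersurfaces of strictly smaller dimension in Teichm\"uller space, which are therefore Weil--Petersson null. Your version is more careful in spelling out the passage through the mapping class group quotient and the absolute continuity of the Weil--Petersson measure with respect to Lebesgue measure in shear coordinates, points the paper leaves implicit.
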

\begin{proof}
	It follows from the previous lemma, that the complement $\mathcal{M}_{0,1+n}(0,\mathbf{L}) \setminus \mathcal{M}_{0,1+n}^\circ(0,\mathbf{L})$ is covered by finitely many real-analytic hypersurfaces in $\mathcal{T}_{0,n+1}(0,\mathbf{L})$.
	These have vanishing Weil-Petersson measure.
\end{proof}

From here on we will restrict to $\tree \in \treeset_n(\mathbf{L})$.
This means that all inner vertices are of degree $3$ and all corners of those boundary vertices for which $L_i > 0$ are non-ideal.
In this case we choose the ordering of edges and corners around a boundary vertex $\bvertex$ such that for $j=1,\ldots,\deg(\bvertex)$ the corner $\corner_{\bvertex,j}$ sits between the edges $\vec{\edge}_{\bvertex,j}$ and $\vec{\edge}_{\bvertex,j+1}$ (where we use the convention $w_{\bvertex,\deg(\bvertex)+1} = w_{\bvertex,1}$).

\begin{figure}[h!]
    \centering
    \includegraphics[width=\linewidth]{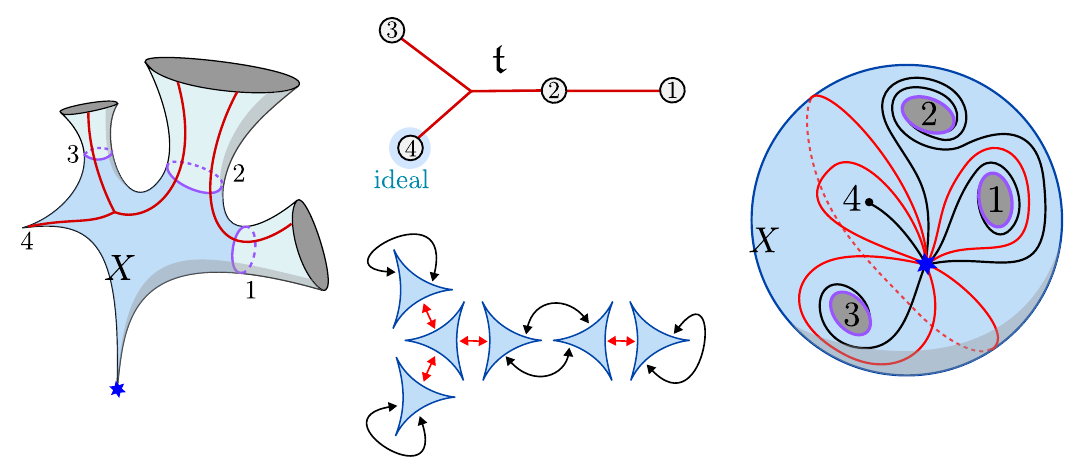}
    \caption{The tree $\tree$ gives rise to a canonical ideal triangulation of $\mathsf{X}$ that is illustrated on the right, where edges of the triangulation that are dual to edges of $\tree$ are colored in red.\label{fig:spineshears}}
\end{figure}

\subsection{Weil--Petersson Poisson structure on the tree decoration}

For $\tree \in \treeset_n(\mathbf{L})$ we introduce the bivector $\pi_\tree \in \bigwedge^2 \R^{6n-6}$ as 
\begin{equation}\label{eq:polytopebivector}
	\pi_\tree = \frac{1}{2}\sum_{\vertex\in\vsetinner(\tree)} \sum_{j=1}^3 \frac{\partial}{\partial \varphi(\vec{\edge}_{\vertex,j})}\wedge \frac{\partial}{\partial \varphi(\vec{\edge}_{\vertex,j+1})} + \frac{1}{2}\sum_{\bvertex \in \vsetboundary(\tree)} \sum_{j=1}^{\deg(\bvertex)} \left(\frac{\partial}{\partial w_{\bvertex,j}}\wedge \frac{\partial}{\partial v_{\bvertex,j}} + \frac{\partial}{\partial v_{\bvertex,j}} \wedge \frac{\partial}{\partial w_{\bvertex,j+1}}\right),
\end{equation}
where it is understood that $\vec{\edge}_{\vertex,4} = \vec{\edge}_{\vertex,1}$ and $w_{\bvertex,\deg(\bvertex)+1} = w_{\bvertex,1}$.
We may equally think of $\pi_\tree \in \bigwedge^2 T\R^{6n-6}$ as a constant bivector field on the manifold $\R^{6n-6}$.

\begin{lemma}
	For each $\tree\in\treeset_{n}(\mathbf{L})$, the bivector $\pi_\tree$ defines a Poisson structure $\{ f, g\}_{\tree} = \pi_\tree(\rmd f,\rmd g)$ on $\R^{6n-6}$ with symplectic leaves of dimension $2n-4$. The polytope $\mathcal{A}_\tree(\mathbf{L})$ of \eqref{eq:polytopedef} is (an open subset of) such a symplectic leaf and thus comes with a corresponding symplectic form $\omega_\tree$.
\end{lemma}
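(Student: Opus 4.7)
The critical observation is that in the global coordinates $\{\varphi(\vec{\edge})\}_{\vec{\edge}\in\vec{\edgeset}(\tree)} \cup \{w_{\bvertex,j},v_{\bvertex,j}\}_{\bvertex,j}$ on $\R^{6n-6}$, the coefficients of $\pi_\tree$ are constants in $\{0,\pm\tfrac12\}$. Since the Schouten--Nijenhuis bracket $[\pi_\tree,\pi_\tree]$ is, in coordinates, a sum of products of bracket coefficients with their partial derivatives, it vanishes identically. The Jacobi identity therefore holds for free, so $\pi_\tree$ is a Poisson tensor, and its symplectic leaves are the common level sets of the linear Casimir functions.

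The task then reduces to exhibiting a maximal independent family of Casimirs. Because $\pi_\tree$ decomposes as a sum over vertices involving pairwise disjoint sets of coordinates, the analysis can be performed vertex by vertex. At each inner vertex $\vertex$ the three angles $\varphi(\vec{\edge}_{\vertex,j})$ are subject to the cyclic wedge structure $\tfrac12\sum_{j=1}^{3}\partial_{\varphi(\vec{\edge}_{\vertex,j})}\wedge\partial_{\varphi(\vec{\edge}_{\vertex,j+1})}$, whose $3\times 3$ skew bracket matrix has rank $2$, with kernel spanned by $(1,1,1)$ and single Casimir $\sum_{i=1}^{3}\varphi(\vec{\edge}_{\vertex,i})$. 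At each boundary vertex $\bvertex$ of degree $k$ the $2k$ coordinates $w_{\bvertex,1},v_{\bvertex,1},\ldots,w_{\bvertex,k},v_{\bvertex,k}$ form a cyclic ``ladder'' in which only consecutive entries (in this cyclic order) have nonzero bracket $\pm\tfrac12$. Solving the linear system $\{f,w_{\bvertex,j}\}=\{f,v_{\bvertex,j}\}=0$ for a linear $f$ yields exactly the two Casimirs $\sum_j w_{\bvertex,j}$ and $\sum_j v_{\bvertex,j}$, so this block has rank $2k-2$. Finally, the coordinates $\varphi(\vec{\edge}_{\bvertex,i})$ do not enter $\pi_\tree$ at all and are therefore each individually Casimirs.

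Summing these local ranks and invoking the tree identity \eqref{eq:treeidentity} together with $\deg(\vertex)=3$ for every inner vertex gives total rank $2|\vsetinner(\tree)|+\sum_{\bvertex}(2\deg(\bvertex)-2)=2n-4$, so every symplectic leaf of $\pi_\tree$ is of dimension $2n-4$. The polytope $\mathcal{A}_\tree(\mathbf{L})$ of \eqref{eq:polytopedef} is defined precisely by prescribing a value for each of the linear Casimirs just identified: $\sum_i\varphi(\vec{\edge}_{\vertex,i})=\pi$ at inner vertices, $\sum_j w_{\bvertex,j}=\sum_j v_{\bvertex,j}=L_\bvertex/2$ at boundary vertices (with the appropriate cusp conventions), and $\varphi(\vec{\edge}_{\bvertex,i})=0$. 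The strict inequalities in \eqref{eq:polytopedef} then carve out an open subset of this common level set, so $\mathcal{A}_\tree(\mathbf{L})$ sits inside a single symplectic leaf, on which the restriction of $\pi_\tree$ is nondegenerate and induces a symplectic $2$-form $\omega_\tree$.

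The only step requiring genuine attention is the boundary-vertex Casimir computation, where the cyclic indexing in \eqref{eq:polytopebivector} must be tracked carefully to confirm that no Casimirs besides $\sum w$ and $\sum v$ appear and that the rank is exactly $2k-2$. All other steps are either immediate (Jacobi identity) or routine bookkeeping with the tree identity.
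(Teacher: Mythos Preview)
Your proposal is correct and follows essentially the same approach as the paper: observe that $\pi_\tree$ has constant coefficients (hence Jacobi is automatic), identify the same list of linear Casimirs vertex-by-vertex ($\sum_i\varphi(\vec{\edge}_{\vertex,i})$ at inner vertices, $\sum_j w_{\bvertex,j}$ and $\sum_j v_{\bvertex,j}$ at boundary vertices, and the individual $\varphi(\vec{\edge}_{\bvertex,i})$), and recognize $\mathcal{A}_\tree(\mathbf{L})$ as an open subset of their common level set. The paper's proof is terser---it simply lists the Casimirs and asserts they span the kernel---whereas you spell out the block ranks and use the tree identity~\eqref{eq:treeidentity} explicitly to sum them to $2n-4$; this extra detail is helpful but not a different argument.
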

\begin{proof}
	The bracket $\{ f, g\}_{\tree}$ of a constant bivector field automatically satisfies the Jacobi identity and therefore determines a Poisson structure.
	It clearly satisfies 
	\begin{align*}
		\forall_{\bvertex\in \vsetboundary(\tree)}\forall_{1\leq j\leq \deg(\bvertex)} \,\left\{ \varphi(\vec{\edge}_{\bvertex,j}), \,\cdot\, \right\}_{\tree} &= 0, \quad &\forall_{\vertex\in \vsetinner(\tree)} \,\left\{ \sum_{j=1}^3\varphi(\vec{\edge}_{\vertex,j}), \,\cdot\, \right\}_{\tree} &= 0, \\
		\forall_{\bvertex\in \vsetboundary(\tree)} \,\left\{ \sum_{j=1}^{\deg(\bvertex)}v_{\bvertex,j}, \,\cdot\, \right\}_{\tree} &= 0, \quad &\forall_{\bvertex\in \vsetboundary(\tree)} \,\left\{ \sum_{j=1}^{\deg(\bvertex)}w_{\bvertex,j}, \,\cdot\, \right\}_{\tree} &= 0,
	\end{align*}
	and it is easily checked that these span the kernel of $\pi_\tree$ (seen as a mapping $\pi_\tree^\sharp: T^*\R^{6n-6} \to T \R^{6n-6}$).
	From the definition \eqref{eq:polytopedef}, we observe that $\mathcal{A}_\tree(\mathbf{L})$ corresponds to the non-empty open subset $\{ \varphi(\vec{\edge}) + \varphi(\cev{\edge}) < \pi\text{ for }\edge \in \edgeset(\tree),\, v_{\bvertex,j}, w_{\bvertex, j} > 0\text{ for }\bvertex\in\vsetboundary(\tree), j = 1, \ldots,\deg(\bvertex)\}$ of a symplectic leaf (namely a translation of the image of $\pi_\tree^\sharp$).
	The corresponding symplectic form on $\mathcal{A}_\tree(\mathbf{L})$ is related to $\pi_\tree$ via $\omega_\tree(\pi_\tree^\sharp \alpha,\pi_\tree^\sharp \beta) = - \pi_\tree(\alpha,\beta)$, see e.g.\ \cite[Prop.~2.24]{Crainic_Lectures_2021}.
\end{proof}

We are primarily interested in the volume form on $\mathcal{A}_\tree(\mathbf{L})$ induced by its symplectic structure.
Luckily, it can be computed directly from the bivector $\pi_\tree$.

\begin{lemma}\label{lem:volumeformpolytope}
	Up to an irrelevant sign the Liouville volume form $\mu_\tree = \pm \omega_\tree^{n-2} / (n-2)!$ on $\mathcal{A}_\tree(\mathbf{L})$ is equal to
	\begin{align*}
		\mu_\tree = 2^{n-2}\bigwedge_{\vertex\in \vsetinner(\tree)} \left(\rmd \varphi(\vec{\edge}_{\vertex,1}) \wedge\rmd \varphi(\vec{\edge}_{\vertex,2})\right)\wedge \bigwedge_{\bvertex\in \vsetboundary(\tree)} \bigwedge_{j=1}^{\deg(\bvertex)-1}\left(\rmd v_{\bvertex,j}\wedge\rmd w_{\bvertex,j}\right).
	\end{align*}
\end{lemma}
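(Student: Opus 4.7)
The strategy is to exploit the direct-sum structure of $\pi_\tree$. The bivector \eqref{eq:polytopebivector} decomposes as $\pi_\tree = \sum_{\vertex \in \vsetinner(\tree)} \pi_\vertex + \sum_{\bvertex \in \vsetboundary(\tree)} \pi_\bvertex$, where $\pi_\vertex$ involves only the three angles at $\vertex$ and $\pi_\bvertex$ only the $w$- and $v$-coordinates at $\bvertex$. Since these coordinate groups are disjoint, the ambient $\R^{6n-6}$ factors accordingly, the symplectic leaf $\polytope(\mathbf{L})$ splits as a product of local leaves, and $\mu_\tree$ splits as a wedge product of their Liouville forms. I would then invoke the standard identity $\omega^k / k! = \pm\mathrm{Pf}(\Pi)^{-1}\,\rmd x_1\wedge\cdots\wedge \rmd x_{2k}$ on a $2k$-dimensional symplectic leaf with coordinates $(x_1,\ldots,x_{2k})$ and bracket matrix $\Pi_{ij} = \{x_i,x_j\}_\tree$, reducing the lemma to a Pfaffian computation at each vertex.

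For an inner vertex $\vertex$, the three angles $\varphi_j = \varphi(\vec{\edge}_{\vertex,j})$ have Casimir $\varphi_1+\varphi_2+\varphi_3-\pi$, and on the $2$-dimensional leaf the single bracket $\{\varphi_1,\varphi_2\}_\tree = \tfrac12$ is immediate from $\pi_\vertex$, yielding the local Liouville form $\pm 2\,\rmd\varphi_1 \wedge \rmd\varphi_2$. For a boundary vertex $\bvertex$ with $L_\bvertex > 0$ and $d=\deg(\bvertex)$, on the $(2d-2)$-dimensional leaf with coordinates $(w_1,\ldots,w_{d-1},v_1,\ldots,v_{d-1})$, a direct reading of $\pi_\bvertex$ gives $\{w_j,w_k\}_\tree = \{v_j,v_k\}_\tree = 0$ and $\{w_j,v_k\}_\tree = \tfrac12(\delta_{jk}-\delta_{j,k+1})$ for $j,k \in \{1,\ldots,d-1\}$. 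The bracket matrix thus takes the block form $\Pi_\bvertex = \begin{pmatrix} 0 & M \\ -M^\top & 0\end{pmatrix}$ with $M \in \R^{(d-1)\times(d-1)}$ lower bidiagonal carrying $\tfrac12$ on the diagonal and $-\tfrac12$ on the subdiagonal, so $\mathrm{Pf}(\Pi_\bvertex) = \pm\det M = \pm 2^{-(d-1)}$; cusps ($d=1$) give a $0$-dimensional leaf and no factor.

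Multiplying Pfaffians over all vertices then yields $\mathrm{Pf}(\Pi_\tree) = \pm 2^{-|\vsetinner(\tree)|-\sum_\bvertex(\deg(\bvertex)-1)}$. Invoking the tree identity \eqref{eq:treeidentity} together with $\deg(\vertex)=3$ at every inner vertex collapses this exponent to $n-2$, so the reciprocal produces the factor $2^{n-2}$, and the resulting top Lebesgue form in the chosen leaf coordinates matches the wedge product in the statement up to reordering within each boundary block --- an irrelevant sign. The main technical step is the boundary-vertex bracket identification; fortunately the cyclic wrap-around index $k=d$ drops out upon restricting to leaf coordinates, so $M$ turns out to be triangular and its determinant is read off directly.
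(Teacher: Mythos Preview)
Your proposal is correct and follows essentially the same route as the paper: both arguments exploit the block decomposition of $\pi_\tree$ over the vertices of $\tree$, restrict to independent leaf coordinates, and compute the top exterior power of the bivector on the leaf (equivalently, the Pfaffian of the bracket matrix), then invert to obtain the factor $2^{n-2}$. Your Pfaffian formulation is in fact a bit more careful than the paper's ``simple block structure'' remark, since you explicitly identify the bracket matrix $M$ in the boundary-vertex block and observe that the cyclic wrap-around term vanishes on the leaf, making $M$ lower-triangular with determinant $2^{-(d-1)}$; the paper's displayed restricted bivector still contains a $\partial_{w_{\bvertex,\deg(\bvertex)}}$ which is not a leaf coordinate, whereas your bracket-matrix computation sidesteps this.
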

\begin{proof}
	Clearly $(\varphi(\vec{\edge}_{\vertex,1}),\varphi(\vec{\edge}_{\vertex,2}))_{\vertex\in \vsetinner(\tree)},((v_{\bvertex,j},w_{\bvertex,j})_{j=1}^{\deg(\bvertex)-1})_{\bvertex \in \vsetboundary(\tree)}$ constitutes a proper coordinate chart on $\mathcal{A}_\tree(\mathbf{L})$.
	In these coordinates, the non-degenerate Poisson bivector on $\mathcal{A}_\tree(\mathbf{L})$ is
	\begin{align*}
		\pi_\tree|_{\mathcal{A}_\tree(\mathbf{L})} = \frac{1}{2}\sum_{\vertex\in\vsetinner(\tree)} \frac{\partial}{\partial \varphi(\vec{\edge}_{\vertex,1})}\wedge \frac{\partial}{\partial \varphi(\vec{\edge}_{\vertex,2})} + \frac{1}{2}\sum_{\bvertex \in \vsetboundary(\tree)} \sum_{j=1}^{\deg(\bvertex)-1} \left(\frac{\partial}{\partial w_{\bvertex,j}}\wedge \frac{\partial}{\partial v_{\bvertex,j}} + \frac{\partial}{\partial v_{\bvertex,j}} \wedge \frac{\partial}{\partial w_{\bvertex,j+1}}\right).
	\end{align*}
	From the simple block structure we see that the top exterior power $(\pi_\tree|_{\mathcal{A}_\tree(\mathbf{L})})^{n-2} / (n-2)! \in \bigwedge^{2n-4} T \mathcal{A}_\tree(\mathbf{L})$ is simply
	\begin{align*}
		\frac{(\pi_\tree|_{\mathcal{A}_\tree(\mathbf{L})})^{n-2}}{(n-2)!} = 2^{2-n}\bigwedge_{\vertex\in \vsetinner(\tree)} \left(\frac{\partial}{\partial \varphi(\vec{\edge}_{\vertex,1})} \wedge\frac{\partial}{\partial \varphi(\vec{\edge}_{\vertex,2})}\right)\wedge \bigwedge_{\bvertex\in \vsetboundary(\tree)} \bigwedge_{j=1}^{\deg(\bvertex)-1}\left(\frac{\partial}{\partial v_{\bvertex,j}}\wedge\frac{\partial}{\partial w_{\bvertex,j}}\right).
	\end{align*}
	Since $\pi_\tree$ and $\omega_\tree$ are constant in these coordinates, the natural pairing of $\bigwedge^{2n-4} T \mathcal{A}_\tree(\mathbf{L})$ and $\bigwedge^{2n-4} T^* \mathcal{A}_\tree(\mathbf{L})$ shows that $\mu_\tree$ is $2^{n-2}$ times the standard $(2n-4)$-dimensional volume form in these coordinates. 
\end{proof}

\begin{proposition}\label{prop:poissonisomorphism}
	For each $\tree\in\treeset_{n}(\mathbf{L})$, the mapping $\mathsf{Glue}|_{\mathcal{A}_\tree(\mathbf{L})}$ is a Poisson isomorphism onto its image $\mathsf{Glue}(\tree,\mathcal{A}_\tree(\mathbf{L}))$ equipped with the Weil-Petersson Poisson structure.
\end{proposition}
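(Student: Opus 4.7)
The plan is to verify, via the explicit shear-coordinate formulas of Section~\ref{sec:spinetoshear}, that the mapping $\mathsf{Shear}_\tree$ intertwines $\pi_\tree$ with the Thurston--Fock bivector $\pi_{\triangulation_\tree}$. Since $\pi_{\triangulation_\tree}$ coincides with the Weil--Petersson Poisson structure on $\mathcal{T}_{0,n+1}(0,\mathbf{L})$, and $\mathsf{Shear}_\tree$ is a real-analytic embedding of full dimension into a symplectic leaf of $\pi_{\triangulation_\tree}$ (as established in the previous subsection), this is enough to conclude that $\mathsf{Glue}|_{\mathcal{A}_\tree(\mathbf{L})}$ is a Poisson isomorphism onto its image. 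Concretely, the task is to verify $\{z_\alpha \circ \mathsf{Shear}_\tree, z_\beta \circ \mathsf{Shear}_\tree\}_\tree = (\pi_{\triangulation_\tree})_{\alpha,\beta}$ for every pair of arcs $\alpha,\beta \in \triangulation_\tree$.

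Both bivectors admit local decompositions: $\pi_{\triangulation_\tree}$ as a sum of Thurston--Fock contributions over the ideal triangles of $\triangulation_\tree$, and $\pi_\tree = \sum_{\vertex \in \vsetinner(\tree)} \pi_\vertex + \sum_{\bvertex \in \vsetboundary(\tree)} \pi_\bvertex$ following the two summands of \eqref{eq:polytopebivector}. Each ideal triangle of $\triangulation_\tree$ is either dual to a unique inner vertex $\vertex$---with sides the arcs dual to $\vec{\edge}_{\vertex,1},\vec{\edge}_{\vertex,2},\vec{\edge}_{\vertex,3}$---or lies in the funnel/cusp region adjacent to a unique boundary vertex $\bvertex$. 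Since each pulled-back shear $z_\alpha$ only depends on the polytope coordinates attached to the vertices of $\tree$ incident to $\alpha$, the contributions at distinct vertices of $\tree$ do not interfere, and one may verify the identification vertex-by-vertex.

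For an inner vertex $\vertex$, the three shears around the dual triangle are given by $z_{\vec{\edge}_{\vertex,j}} = \log\sin\varphi(\vec{\edge}_{\vertex,j+1}) - \log\sin\varphi(\vec{\edge}_{\vertex,j+2})$ combined with $z_\edge = z_{\vec{\edge}} + z_{\cev{\edge}}$. A short computation, using the sine law and the constraint $\sum_j \varphi(\vec{\edge}_{\vertex,j}) = \pi$, identifies the Thurston--Fock contribution of this triangle with $\pi_\vertex$. For a boundary vertex $\bvertex$ with $L_\bvertex > 0$ one enumerates the cyclic chain of triangles in the funnel region---each involving one edge arc and the adjacent corner arcs---and uses $z_{\vec{\edge}_{\bvertex,j}} = w_{\bvertex,j}$ together with $z_{\corner_{\bvertex,j}} = -v_{\bvertex,j} + \log\frac{1-e^{-w_{\bvertex,j}}}{e^{w_{\bvertex,j+1}}-1}$ from \eqref{eq:shearw}--\eqref{eq:shearv}. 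The derivatives of $z_{\corner_{\bvertex,j}}$ with respect to $w_{\bvertex,j}$ and $w_{\bvertex,j+1}$ are rational in the relevant exponentials, and when the Thurston--Fock contributions over all triangles of the chain at $\bvertex$ are summed, these rational pieces telescope, leaving exactly the alternating bracket pattern of $\pi_\bvertex$. The cusp case $L_\bvertex = 0$ admits a parallel but shorter verification using $z_{\vec{\edge}_{\bvertex,j}} = 0$ and $z_{\corner_{\bvertex,j}} = \log(w_{\bvertex,j}/w_{\bvertex,j+1})$.

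The main obstacle is the boundary-vertex computation: one must keep careful track of the clockwise orientation of the funnel triangulation to assign the correct signs to the Thurston--Fock contributions, and check that the a priori cumbersome logarithmic derivatives of $z_\corner$ telescope cleanly to the alternating bivector of \eqref{eq:polytopebivector} without leaving residual terms (which would at best have to be absorbed into the constraints $\sum_j w_{\bvertex,j} = \sum_j v_{\bvertex,j} = L_\bvertex/2$ that lie in the kernel of $\pi_\tree$).
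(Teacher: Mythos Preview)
Your proposal is correct and follows essentially the same approach as the paper: verify pairwise that $\{z_\alpha,z_\beta\}_\tree$ matches the Thurston--Fock value, organised by the vertex of $\tree$ shared by $\alpha$ and $\beta$; the paper carries out precisely these three local computations (edge--edge at an inner vertex via Hermite's cotangent identity, edge--corner and corner--corner at a boundary vertex via the explicit formulas \eqref{eq:shearw}--\eqref{eq:shearv}). One minor remark: for $\tree\in\treeset_n(\mathbf{L})$ a cusp vertex has $\deg(\bvertex)=1$, so the separate cusp case you mention is actually vacuous.
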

\begin{proof}
	It suffices to compute the Poisson brackets $\{ \,\cdot\,,\,\cdot\,\}_\tree$ of the components of $\mathsf{Shear}_\tree$ and check that they match with the bracket $\{ \,\cdot\,,\,\cdot\,\}_{\triangulation}$ associated to \eqref{eq:thurstonfockbivector} in Thurston-Fock coordinates.
	Since \eqref{eq:polytopebivector} is local to the vertices of $\tree$, it is clear that $\{ z_{\edge},z_{\edge'}\}_\tree = 0$ unless the edges $\edge,\edge' \in \edgeset(\tree)$ share an inner vertex.
	Assuming they do and they correspond to $\vec{\edge}_{\vertex,1}$ and $\vec{\edge}_{\vertex,2}$, we compute
	\begin{align}
		\{ z_{\edge},z_{\edge'}\}_\tree &= \{ z_{\vec{\edge}_{\vertex,1}},z_{\vec{\edge}_{\vertex,2}}\}_\tree = \left\{ \log\frac{\sin\varphi(\vec{\edge}_{\vertex,2})}{\sin\varphi(\vec{\edge}_{\vertex,3})}, \log\frac{\sin\varphi(\vec{\edge}_{\vertex,3})}{\sin\varphi(\vec{\edge}_{\vertex,1})}\right\}_\tree \\
		&= \frac{1}{2}\left(\cot\varphi(\vec{\edge}_{\vertex,1})\cot\varphi(\vec{\edge}_{\vertex,2})+\cot\varphi(\vec{\edge}_{\vertex,2})\cot\varphi(\vec{\edge}_{\vertex,3})+\cot\varphi(\vec{\edge}_{\vertex,3})\cot\varphi(\vec{\edge}_{\vertex,1})\right) \label{eq:pre-Hermite} \\
		&= \frac{1}{2} \label{eq:post-Hermite} = \{ z_{\edge},z_{\edge'}\}_{\triangulation},
	\end{align}
	where the step from \eqref{eq:pre-Hermite} to \eqref{eq:post-Hermite} is due to a version of Hermite's cotangent identity, since $\sum_{j=1}^3 \varphi(\vec{\edge}_{\vertex,j}) = \pi$.

	For an edge $\edge\in\edgeset(\tree)$ and a corner $\corner\in\cornerset(\tree)$, we observe from the structure of \eqref{eq:polytopebivector} and the expressions \eqref{eq:shearw} and \eqref{eq:shearv} that $\{ z_{\edge},z_{\corner}\}_\tree = 0$ unless $\edge$ is adjacent to $\corner$.
	If they correspond to $\vec{\edge}_{\bvertex,j}$ and $\corner_{\bvertex,j}$ respectively, then 
	\begin{align*}
		\{ z_{\edge},z_{\corner}\}_\tree &= \{ z_{\vec{\edge}_{\bvertex,j}},z_{\corner}\}_\tree = \left\{ w_{\bvertex,j},-v_{\bvertex,j} + \log \frac{1-e^{-w_{\bvertex,j}}}{e^{w_{\bvertex,j+1}}-1}\right\}_\tree = -\frac{1}{2} = \{ z_{\edge},z_{\corner}\}_{\triangulation}, 
	\end{align*}
	In case they correspond to $\vec{\edge}_{\bvertex,j+1}$ and $\corner_{\bvertex,j}$, we instead have 
	\begin{align*}
		\{ z_{\edge},z_{\corner}\}_\tree &= \{ z_{\vec{\edge}_{\bvertex,j}},z_{\corner}\}_\tree = \left\{ w_{\bvertex,j+1},-v_{\bvertex,j} + \log \frac{1-e^{-w_{\bvertex,j}}}{e^{w_{\bvertex,j+1}}-1}\right\}_\tree = \frac{1}{2} = \{ z_{\edge},z_{\corner}\}_{\triangulation}.
	\end{align*}
	
	Finally, if $\corner,\corner' \in \cornerset(\tree)$ are two corners, then clearly $\{ z_{\corner},z_{\corner'}\}_\tree = 0$ unless $\corner$ and $\corner'$ are adjacent at the same boundary vertex $\bvertex$.
	If they correspond to $\corner_{\bvertex,j}, \corner_{\bvertex,j+1}$, and $\deg(\bvertex) > 2$, then
	\begin{align*}
		\{ z_{\corner},z_{\corner'}\}_\tree &= \left\{-v_{\bvertex,j} + \log \frac{1-e^{-w_{\bvertex,j}}}{e^{w_{\bvertex,j+1}}-1},-v_{\bvertex,j+1} + \log \frac{1-e^{-w_{\bvertex,j+1}}}{e^{w_{\bvertex,j+2}}-1}\right\}_\tree \\
		&= -\frac{1}{e^{w_{\bvertex,j+1}}-1} \{ v_{\bvertex,j}, w_{\bvertex,j+1}\}_\tree + \frac{e^{w_{\bvertex,j+1}}}{e^{w_{\bvertex,j+1}}-1} \{ w_{\bvertex,j+1}, v_{\bvertex,j+1}\}_\tree \\
		&= \frac{1}{2} = \{ z_{\corner},z_{\corner'}\}_{\triangulation}.
	\end{align*}
	If $\deg(\bvertex)=2$ then both Poisson brackets vanish due to the constraint $z_\corner + z_{\corner'} = L_{\bvertex}/2$.
	This finishes the proof.
\end{proof}

Finally we have all the ingredients for our main theorem.
\begin{proof}[Proof of Theorem~\ref{thm:introbijection}]
	Proposition~\ref{prop:fullwpmeasure} gives the existence of the subset $\mathcal{M}_{0,1+n}^\circ(0,\mathbf{L}) \subset \mathcal{M}_{0,1+n}(0,\mathbf{L})$ of full measure and Theorem~\ref{thm:fullbijection} provides the bijection restricted to $\mathcal{M}_{0,1+n}^\circ(0,\mathbf{L})$. 
	It is a Poisson isomorphism by Proposition~\ref{prop:poissonisomorphism}, so the Weil-Petersson volume form on $\mathcal{M}_{0,1+n}^\circ(0,\mathbf{L})$ agrees with the pull-back along $\mathsf{Spine}$ of the Liouville volume form on $\mathcal{A}_\tree(\mathbf{L})$.
	According to Lemma~\ref{lem:volumeformpolytope} the latter agrees with the (rescaled) Lebesgue measure \eqref{eq:measurepolytope}. 
\end{proof}

\section{Computation of Weil--Petersson volumes}

As an application of our tree bijection we turn to the computation of Weil-Petersson volumes.
After dealing with the case of fixed $n$ we will turn attention to the generating functions.

\subsection{Polytope volumes via inclusion-exclusion}\label{sec:inclexcl}

As announced in the introduction, the main difficulty in computing the volumes $\operatorname{Vol}_\tree(\mathcal{A}_\tree(\mathbf{L}))$ comes from the Delaunay condition $\varphi(\vec{\edge}) + \varphi(\cev{\edge}) > \pi$, which is there to ensure that the edge $\edge$ of the spine has positive length.
In the volume computation of \cite[Section~4.3]{budd2025random} this was addressed, roughly speaking, by considering the generating function of volumes associated to rooted trees and introducing a ``catalytic variable'' to control the angle $\varphi(\vec{\edge})$ at the root.
Here we follow a complementary approach via inclusion-exclusion to reverse the Delaunay inequality.

If $\tree\in\treeset_n(\mathbf{L})$ and $\mathsf{A} \subset \edgeset(\tree)$ is a subset of the edges of $\tree$, that we call \emph{anti-Delaunay edges}, then we introduce the new polytope
\begin{align}
	\mathcal{A}^{\mathrm{anti}}_{\tree,\mathsf{A}}(\mathbf{L}) \coloneqq \left\{ \varphi : \vec{\edgeset}(\tree) \to \R_{\geq 0} \middle|\begin{array}{l}
	\varphi(\vec{\edge}_{\bvertex,i}) = 0\text{ for }\bvertex\in\vsetboundary(\tree), 1\leq i\leq\deg(\bvertex),\\
	\varphi(\vec{\edge}_{\vertex,i}) > 0\text{ for }\vertex\in \vsetinner(\tree),  1\leq i\leq 3,\\
	\varphi(\vec{\edge}_{\vertex,1})+\varphi(\vec{\edge}_{\vertex,2})+\varphi(\vec{\edge}_{\vertex,3}) = \pi\text{ for }\vertex\in \vsetinner(\tree), \\ 
	\varphi(\vec{\edge}) + \varphi(\cev{\edge}) > \pi\text{ for }\edge\in \mathsf{A}\end{array}\right\} \times \prod_{\bvertex\in \vsetboundary(\tree)} \left(\Delta_{\deg(\bvertex)}(L_\bvertex/2)\right)^2.\label{eq:antidelaunaypolytope}
\end{align}
Note that it differs from $\mathcal{A}_{\tree}(\mathbf{L})$ only in the last condition on the angles, in particular there is no condition for the remaining edges $\edgeset(\tree)\setminus \mathsf{A}$.
Observe also that an edge incident to a boundary vertex cannot obey the anti-Delaunay condition, so from here on we assume that the edges $\mathsf{A}$ have their endpoints at the inner vertices $\vsetinner(\tree)$ so that $\mathcal{A}^{\mathrm{anti}}_{\tree,\mathsf{A}}(\mathbf{L})$ is non-empty.
It is naturally equipped with same volume measure $\operatorname{Vol}_\tree$ defined in \eqref{eq:measurepolytope}.

Since the polytopes $\mathcal{A}^{\mathrm{anti}}_{\tree,\mathsf{A}}(\mathbf{L})$ have finite volume for all $\mathsf{A}$, an application of the inclusion-exclusion principle immediately yields the relation 
\begin{align}
 \operatorname{Vol}_\tree(\mathcal{A}_\tree(\mathbf{L})) = \sum_{\mathsf{A} \subset \edgeset(\tree)} (-1)^{|\mathsf{A}|} \operatorname{Vol}_\tree(\mathcal{A}^{\mathrm{anti}}_{\tree,\mathsf{A}}(\mathbf{L})).\label{eq:inclusionexclusion}
\end{align}

Recall from Section~\ref{sec:mainresults} the set $\widetilde{\treeset}_n(\mathbf{L}) \supset \treeset_n(\mathbf{L})$ of bicolored trees whose inner vertices are allowed to have degree $3$ or larger.
Given $\tree$ and $\mathsf{A}$, we define the \emph{anti-Delaunay tree} $\tilde{\tree}\in \widetilde{\treeset}_n(\mathbf{L})$ to be the combinatorial tree obtained from $\tree$ by contracting the edges $\mathsf{A}$.
To each inner vertex $\vertex\in\vsetinner(\tilde{\tree})$ of $\tilde{\tree}$ one can associate a subtree $\tree_\vertex$ of $\tree$ including all edges of $\tree$ incident to one of the vertices that was merged into $\vertex$.
For concreteness one may treat the leafs of $\tree_\vertex$ as if they correspond to univalent boundary vertices, e.g.~cusps.
See Figure~\ref{fig:antidelaunaytree} for an example.

\begin{figure}[h!]
    \centering
    \includegraphics[width=.9\linewidth]{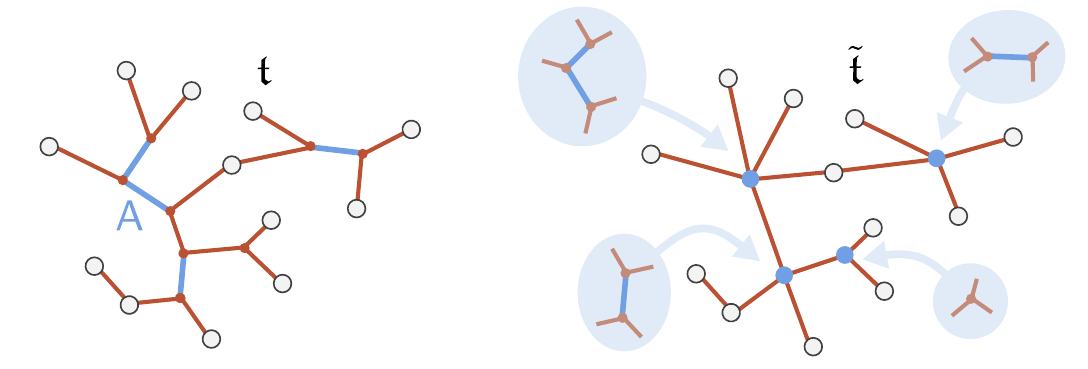}
    \caption{An example of a tree $\tree\in\treeset_n(\mathbf{L})$ with a choice $\mathsf{A}\subset\edgeset(\tree)$ of anti-Delaunay edges and the corresponding anti-Delaunay tree $\tilde{\tree}\in\widetilde{\treeset}_n(\mathbf{L})$.\label{fig:antidelaunaytree}}
\end{figure}

\begin{lemma}\label{lem:antifactorization}
	The polytope $\mathcal{A}_{\tree,\mathsf{A}}^{\mathrm{anti}}(\mathbf{L})$ factors into a Cartesian product of polytopes associated to the (boundary and inner) vertices of the anti-Delaunay tree $\tilde{\tree}$,
	\begin{align}
		\mathcal{A}_{\tree,\mathsf{A}}^{\mathrm{anti}}(\mathbf{L})\cong \prod_{\vertex\in\vsetinner(\tilde{\tree})} \mathcal{A}_\vertex \prod_{\bvertex\in\vsetboundary(\tilde{\tree})} \mathcal{A}_\bvertex.
	\end{align}
	The volume factors correspondingly as  
	\begin{align}
		\operatorname{Vol}_\tree(\mathcal{A}^{\mathrm{anti}}_{\tree,\mathsf{A}}(\mathbf{L})) = 2^{n-2}\prod_{\vertex\in\vsetinner(\tilde{\tree})}\frac{\pi^{2\deg(\vertex)-4}}{(2\deg(\vertex)-4)!} \prod_{\bvertex\in\vsetboundary(\tilde{\tree})} \frac{(L_\bvertex/2)^{2\deg(\bvertex)-2}}{((\deg(\bvertex)-1)!)^2}.\label{eq:antivolume}
	\end{align}
	In particular, it depends only on the degrees in $\tilde{\tree}$ and not on the structure of the trees $(\tree_\vertex)_{\vertex\in\vsetinner(\tilde{\tree})}$.
\end{lemma}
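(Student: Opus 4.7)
The plan is to observe that the constraints defining $\mathcal{A}^{\mathrm{anti}}_{\tree,\mathsf{A}}(\mathbf{L})$ are all local to single vertices of the contracted tree $\tilde{\tree}$, which delivers the Cartesian product structure and the boundary-vertex factors for free, and then to compute the inner-vertex volume factor by induction on the size of the subtree $\tree_\vertex$.

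Each variable of $\mathcal{A}^{\mathrm{anti}}_{\tree,\mathsf{A}}(\mathbf{L})$ is naturally attached to a unique vertex of $\tree$: the angle $\varphi(\vec{\edge})$ to the starting vertex of $\vec{\edge}$, and the coordinates $(w_{\bvertex,j}),(v_{\bvertex,j})$ to $\bvertex$. Every constraint in \eqref{eq:antidelaunaypolytope} involves only variables whose carrying vertices are either equal or joined by an edge of $\mathsf{A}$, and so they collapse to the same vertex of $\tilde{\tree}$. The measure \eqref{eq:measurepolytope} is itself a product over the vertices of $\tree$, so regrouping by vertices of $\tilde{\tree}$ yields the Cartesian factorization $\mathcal{A}^{\mathrm{anti}}_{\tree,\mathsf{A}}(\mathbf{L})\cong\prod_{\vertex\in\vsetinner(\tilde{\tree})}\mathcal{A}_\vertex\prod_{\bvertex\in\vsetboundary(\tilde{\tree})}\mathcal{A}_\bvertex$. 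The boundary-vertex factor $\mathcal{A}_\bvertex=(\Delta^{(L_\bvertex/2)}_{\deg(\bvertex)})^2$ is unaffected by the contraction and its Lebesgue volume is the square of a standard simplex volume, $((L_\bvertex/2)^{\deg(\bvertex)-1}/(\deg(\bvertex)-1)!)^2$, matching the boundary part of \eqref{eq:antivolume}.

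For an inner vertex $\vertex\in\vsetinner(\tilde{\tree})$ of degree $d$, the subtree $\tree_\vertex$ has $k=d-2$ trivalent inner vertices, $k-1$ anti-Delaunay internal edges, and $d$ external half-edges with angles $\alpha_1,\ldots,\alpha_d$. I will prove by induction on $k\geq 2$ that the push-forward of the $2k$-dimensional Lebesgue measure on $\mathcal{A}_\vertex$ onto $(\alpha_1,\ldots,\alpha_d)$ has density
\[
\rho_k(\alpha_1,\ldots,\alpha_d)=\frac{(\pi-\alpha_1-\cdots-\alpha_d)^{k-2}}{(k-2)!}
\]
on the open simplex $\{\alpha_i>0,\sum_i\alpha_i<\pi\}$. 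The base case $k=2$ is immediate: after eliminating the two internal angles via the $\sum=\pi$ constraints, the polytope coincides with $\{\alpha_i>0,\sum_{i=1}^4\alpha_i<\pi\}$ with unit density. For the inductive step $k\geq 3$, pick a leaf inner vertex $v'$ of $\tree_\vertex$ (one with exactly one anti-Delaunay neighbor) carrying external angles $\alpha_1,\alpha_2$, and let $\delta$ be the angle opposite the internal edge at $v'$'s neighbor; anti-Delaunay reads $\delta>\alpha_1+\alpha_2$. Removing $v'$ and this edge yields a subtree $\tree_\vertex'$ with $k-1$ inner vertices in which $\delta$ has become external, with all remaining constraints inherited from $\tree_\vertex$. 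By induction the external density of $\tree_\vertex'$ is $\rho_{k-1}(\delta,\alpha_3,\ldots,\alpha_d)$, so the density in $\tree_\vertex$ is
\[
\int_{\alpha_1+\alpha_2}^{\pi-\alpha_3-\cdots-\alpha_d}\frac{(\pi-\delta-\alpha_3-\cdots-\alpha_d)^{k-3}}{(k-3)!}\,\dd\delta=\frac{(\pi-\alpha_1-\cdots-\alpha_d)^{k-2}}{(k-2)!}
\]
via the substitution $u=\pi-\delta-\sum_{i\geq 3}\alpha_i$, closing the induction. The case $k=1$ is handled directly: $\mathcal{A}_\vertex$ is the $2$-simplex of volume $\pi^2/2=\pi^{2d-4}/(2d-4)!$.

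Integrating $\rho_k$ over the simplex is a standard Dirichlet integral evaluating to $\pi^{k+d-2}/(k+d-2)!=\pi^{2d-4}/(2d-4)!$. Multiplying the inner and boundary factors together with the global $2^{n-2}$ of \eqref{eq:measurepolytope} gives \eqref{eq:antivolume}. The main obstacle is the tree-topology-independent form of $\rho_k$; the induction goes through because any trivalent tree with $\geq 2$ inner vertices admits a leaf inner vertex regardless of its overall topology, and the clean Dirichlet shape of $\rho_k$ is precisely what makes the combinatorial structure of $\tree_\vertex$ disappear from the final volume.
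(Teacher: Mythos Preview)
Your proof is correct. The factorization argument and the boundary-vertex computation match the paper's exactly. For the inner-vertex factor, however, you take a genuinely different route: the paper roots $\tree_\vertex$ at a leaf and performs the single unimodular change of variables $\widetilde{\varphi}(\vec{\edge}_i)=\varphi(\vec{\edge}_i)-\sum_{\text{children}}\varphi(\vec{\edge})$, which identifies $\mathcal{A}_\vertex$ directly with the simplex $\Delta_{2k+1}^{(\pi)}$ and reads off the volume $\pi^{2k}/(2k)!$ in one line. You instead establish by induction on $k$ the marginal density $\rho_k(\alpha_1,\ldots,\alpha_d)=(\pi-\sum\alpha_i)^{k-2}/(k-2)!$ on the external angles and then integrate it. Both arguments make the independence from the shape of $\tree_\vertex$ manifest, the paper's because the change of variables works for any rooted binary tree, yours because any such tree with $k\geq 2$ inner vertices has a leaf inner vertex to peel off. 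The paper's approach is shorter and globally bijective; yours has the side benefit of producing the external-angle marginal explicitly (in fact a formula equivalent to this marginal is used later in the paper, in the proof of Lemma~\ref{lem:hatX_A_as_angle_integral}).
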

\begin{proof}
	Since the only conditions in \eqref{eq:antidelaunaypolytope} that are not local to a vertex of $\tree$ are the anti-Delaunay conditions for the edges $\mathsf{A}$, the factorization should be clear for the contracted tree $\tilde{\tree}$.
	Since $\mathcal{A}_\bvertex = \left(\Delta_{\deg(\bvertex)}(L_\bvertex/2)\right)^2$ is a product of two simplices, its Euclidean volume is
	\begin{align*}
		\left(\frac{(L_\bvertex/2)^{\deg(\bvertex)-1}}{(\deg(\bvertex)-1)!}\right)^2.
	\end{align*}

	For $\vertex\in\vsetinner(\tilde{\tree})$, say of degree $\deg(\vertex)=k+2$, we claim that $\mathcal{A}_\vertex \cong \Delta_{2k+1}^{(\pi)}$ is also a simplex of dimension $2k-2$ after a (volume-preserving) change of variables.
	Note that $\tree_\vertex$ has $k$ inner vertices and $k+2$ leaves.
	Let us designate one of the leafs as the root and consider all oriented edge $\vec{\edge}_0,\ldots,\vec{\edge}_{2k}$ of $\tree_\vertex$ that point away from the root, where by convention $\vec{\edge}_0$ starts at the root.
	The anti-Delaunay condition turns into the requirement that $\varphi(\vec{\edge}_i) > \varphi(\vec{\edge}_j) + \varphi(\vec{\edge}_k)$ whenever $\vec{\edge}_j$ and $\vec{\edge}_k$ are the two children of $\vec{\edge}_i$ for $i>0$, keeping in mind the natural genealogical interpretation of the binary tree $\tree_\vertex$.
	This makes sense for the root as well ($i=0$) if we use the convention $\varphi(\vec{\edge}_0) = \pi$.
	It motivates us to introduce $\widetilde{\varphi}(\vec{\edge}_i)>0$ for $i=0, \ldots, 2k$ to be the angle $\varphi(\vec{\edge}_i)$ minus the sum of the angles of its children, if any.
	Then it is easy to see that $(\widetilde{\varphi}(\vec{\edge}_i))_{i=0}^{2k} \in \Delta_{2k+1}^{(\pi)}$, see Figure~\ref{fig:antidelaunay}.

	\begin{figure}[h!]
    \centering
    \includegraphics[width=.9\linewidth]{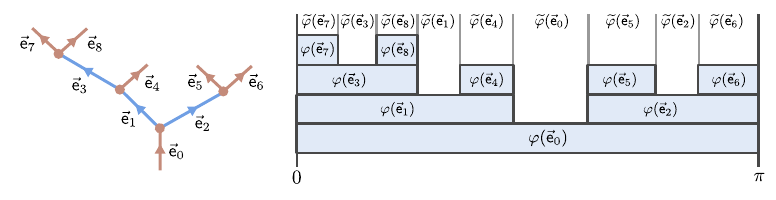}
    \caption{Example of a subtree $\tree_\vertex$ for $\deg(\vertex)=6$ (so $k=4$) rooted at the bottom. The diagram on the right illustrates the partition $\sum_{i=0}^{2k}\widetilde{\varphi}(\vec{\edge}_i) = \pi$. \label{fig:antidelaunay}}
	\end{figure}

	The volume measure (without the power of $2$) on $\mathcal{A}_\vertex$ is the Lebesgue measure in both sets of angles,
	\begin{align*}
		\rmd\varphi(\vec{\edge}_1)\cdots\rmd\varphi(\vec{\edge}_{2k}) = \rmd\widetilde{\varphi}(\vec{\edge}_1)\cdots\rmd\widetilde{\varphi}(\vec{\edge}_{2k}).
	\end{align*}
	Hence $\mathcal{A}_\vertex$ contributes a volume $\pi^{2k}/(2k)!$.
	Taking a product over all vertices of $\tilde{\tree}$ and incorporating the conventional normalization $2^{n-2}$ gives the claimed volume \eqref{eq:antivolume}.
\end{proof}

It remains to sum over $\tree$ and $\mathsf{A}$ to obtain the Weil-Petersson volumes of Theorem~\ref{thm:antidelaunay}.

\begin{proof}[Proof of Theorem~\ref{thm:antidelaunay}]
Combining Corollary~\ref{cor:volpolynomial} and the inclusion-exclusion sum \eqref{eq:inclusionexclusion}, we have
\begin{align}
	V_{0,1+n}(0,\mathbf{L}) = \sum_{\tree\in\treeset_n(\mathbf{L})} \sum_{\mathsf{A}\subset \edgeset(\tree)} (-1)^{|\mathsf{A}|} \operatorname{Vol}_\tree(\mathcal{A}^{\mathrm{anti}}_{\tree,\mathsf{A}}(\mathbf{L})).
\end{align} 
By the previous lemma the volume on the right-hand side depends only on the contracted tree $\tilde{\tree}$.
Counting the pairs $(\tree,\mathsf{A})$ resulting in the same such tree $\tilde{\tree}$ is straightforward: for each $\vertex\in \vsetinner(\tilde{\tree})$ of degree $\deg(\vertex)=k+2$ the subtree $\tree_{\vertex}$ can be any of the Catalan number $\binom{2k}{k}/(k+1)$ of binary trees with $k$ inner vertices. 
Using also that $|\mathsf{A}| = \sum_{\vertex\in\vsetinner(\tilde{\tree})} (\deg(\vertex)-3)$, plugging in \eqref{eq:antivolume} leads to the expression
\begin{align*}
	V_{0,1+n}(0,\mathbf{L}) &= 2^{n-2}\sum_{\tilde{\tree}\in\widetilde{\treeset}_n(\mathbf{L})} \prod_{\vertex\in\vsetinner(\tilde{\tree})}\frac{(-1)^{\deg(\vertex)-3}\pi^{2\deg(\vertex)-4}}{(\deg(\vertex)-1)!(\deg(\vertex)-2)!} \prod_{\bvertex\in\vsetboundary(\tilde{\tree})} \frac{(L_\bvertex/2)^{2\deg(\bvertex)-2}}{((\deg(\bvertex)-1)!)^2}\\
	&= \sum_{\tilde{\tree}\in\widetilde{\treeset}_n(\mathbf{L})} \prod_{\vertex\in\vsetinner(\tilde{\tree})}\frac{2^{\deg(\vertex)-2}(-1)^{\deg(\vertex)-3}\pi^{2\deg(\vertex)-4}}{(\deg(\vertex)-1)!(\deg(\vertex)-2)!} \prod_{\bvertex\in\vsetboundary(\tilde{\tree})} \frac{2^{\deg(\bvertex)-1}(L_\bvertex/2)^{2\deg(\bvertex)-2}}{((\deg(\bvertex)-1)!)^2},
\end{align*} 
where we used the combinatorial identity \eqref{eq:treeidentity} in the second equality.
Recalling the definitions
\begin{align*}
	\gamma_k=\frac{(-1)^k\pi^{2k-2}}{(k-1)!}, \quad t_k(L)=\frac{2}{k!}\left(\frac{L}{2}\right)^{2k},
\end{align*}
this corresponds precisely the claimed formula
\begin{align*}
	V_{0,1+n}(0,\mathbf{L})
	&=\sum_{\tilde{\tree}\in\widetilde{\treeset}_n(\mathbf{L})} \prod_{\vertex\in\vsetinner(\tilde{\tree})} \frac{2^{\deg(\vertex)-2}}{(\deg(\vertex)-1)!}\,\gamma_{\deg(\vertex)-1}\prod_{\bvertex \in \vsetboundary(\tilde{\tree})} \frac{2^{\deg(\bvertex)-2}}{(\deg(\bvertex)-1)!}\,t_{\deg(\bvertex)-1}(L_\bvertex).
\end{align*}
\end{proof}

We emphasize that the relation with decorated anti-Delaunay trees involves inclusion-exclusion, and therefore we loose the strictly bijective correspondence. 
Effectively we have related the Weil--Petersson measure on $\mathcal{M}_{0,1+n}(0,\mathbf{L})$ to a signed measure on 
\begin{align}
	\bigsqcup_{(\tilde{\tree},\mathsf{A})} \mathcal{A}^{\mathrm{anti}}_{\tilde{\tree},\mathsf{A}}(\mathbf{L}),
\end{align}
that is easier to work.
Besides volume computations, we can use it for integration against the  Weil--Petersson measure as long as the integrand extends appropriately to trees with anti-Delaunay edges. 
In Section \ref{sec:distances}, we will see that this is the case for the distance statistic $D$.

\subsection{The string equation}\label{sec:stringeq}

We start by introducing the generating functions of Weil-Petersson volumes a bit more carefully than we did in the introduction.
Following \cite{budd2023topological}, we introduce a weight $\mu\in\R[L^2]^*$ as a linear functional on the ring of real even polynomials.
Then one may define the partition function of genus-$g$ Weil--Petersson volumes as
\begin{align}
	F_g[\mu]=\sum_{n\geq 0}\frac{\mu^{\otimes n}(V_{g,n})}{n!},
\end{align}
where $\mu^{\otimes n}\in(\R[L^2]^*)^{\otimes n}\cong \R[L_1^2,\ldots,L_n^2]^*$ is defined by $\mu^{\otimes n}(f_1(L_1)\dots f_n(L_n))=\mu(f_1)\dots\mu(f_n)$, extended for general even polynomials by linearity.
We often use the physicist notation
\begin{align}
	\mu^{\otimes n}(f)=\int f(\mathbf{L})\, \rmd{\mu(L_1)}\dots\rmd{\mu(L_n)},
\end{align}
which can be interpreted literally whenever $\mu$ arises from a measure on $\R$ with moments of arbitrary order.
In this notation we retrieve the formula \eqref{eq:partfun} given in the introduction,
\begin{align}
	F_g[\mu]=\sum_{n\geq3}\frac{1}{n!}\int V_{g,n}(\mathbf{L})\, \rmd{\mu(L_1)}\dots\rmd{\mu(L_n)}.
\end{align}

Note that $F_g[\mu]$ only depends on the even moments $\int L^{2k}\rmd{\mu(L)}$, so we can formulate $F_g$ as a function of the times $t_k[\mu]$ defined as
\begin{align}
	t_k[\mu]=\int t_k(L) \rmd{\mu(L)}= \int \frac{2}{k!}\left(\frac{L}{2}\right)^{2k} \rmd{\mu(L)},
\end{align}
with $t_k(L)$ as in Theorem~\ref{thm:antidelaunay}.
To avoid discussion about convergence, we interpret $F_g$ as a multivariate formal power series, either in\footnote{In this interpretation, we will often drop the argument $[\mu]$ in our notation.} $(t_0[\mu],t_1[\mu],\ldots)$ or in $\mu$. 
The latter can be made more precise by fixing $\mu$ and interpreting $F_g[x\mu]$ as a univariate formal power series in $x$.

Denoting the delta measure at $L$ by $\delta_L$, we can define the functional derivative $\frac{\delta}{\delta\mu(L)}$ on such power series as 
\begin{align}
	\frac{\delta}{\delta\mu(L)}P[\mu]=	\left.\frac{\partial}{\partial x} P[\mu+x\delta_L]\right|_{x=0},
\end{align}
which gives 
\begin{align}
	\frac{\delta}{\delta\mu(L)}F_g[\mu]=\sum_{n\geq0}\frac{1}{n!}\int V_{g,1+n}(L,\mathbf{L})\, \rmd{\mu(L_1)}\dots\rmd{\mu(L_n)},
\end{align}
where we used the fact that $V_{g,n}$ is symmetric.
We can relate this functional derivative to the formal power series in the times by
\begin{align}
	\frac{\delta}{\delta\mu(L)}=\sum_{k=0}^{\infty}\frac{2}{k!}\left(\frac{L}{2}\right)^{2k}\frac{\partial}{\partial t_k}.
\end{align}

Since we are dealing with genus $0$ and require an origin cusp for the tree bijection, we will be mostly interested in 
\begin{align}
	\frac{\delta}{\delta\mu(0)}F_0[\mu]=2 \frac{\partial}{\partial t_0}F_0(t_0,t_1,\ldots)=\sum_{n\geq2}\frac{1}{n!}\int V_{0,n+1}(0,\mathbf{L})\, \rmd{\mu(L_1)}\dots\rmd{\mu(L_n)}.
\end{align}
Theorem~\ref{thm:antidelaunay} then implies
\begin{align}\label{eq:WP_generating_from_tree}
	\frac{\delta}{\delta\mu(0)}F_0[\mu]=\sum_{n\geq2}\frac{1}{n!}\sum_{\tilde{\tree}\in\widetilde{\treeset}_n} \prod_{\vertex\in\vsetinner(\tilde{\tree})} \frac{2^{\deg(\vertex)-2}}{(\deg(\vertex)-1)!}\,\gamma_{\deg(\vertex)-1}\prod_{\bvertex \in \vsetboundary(\tilde{\tree})} \frac{2^{\deg(\bvertex)-2}}{(\deg(\bvertex)-1)!}\,t_{\deg(\bvertex)-1}[\mu].
\end{align}

In order to exhibit an equation for the generating function, it is convenient to distinguish another cusp, besides the origin, that we may use to root the tree.
We therefore consider the generating function $R[\mu]$ of the twice-punctured sphere, defined by
\begin{align}
	R[\mu]=\frac{\delta^2}{\delta\mu(0)^2}F_0[\mu]=\sum_{n\geq1}\frac{1}{n!}\int V_{0,n+2}(0,0,\mathbf{L})\, \rmd{\mu(L_1)}\dots\rmd{\mu(L_n)}.
\end{align}
It follows from \eqref{eq:WP_generating_from_tree} that
\begin{align}
	R[\mu]&=\sum_{n\geq1}\frac{1}{n!}\sum_{\tilde{\tree}\in\widetilde{\treeset}_{1+n}} \prod_{\vertex\in\vsetinner(\tilde{\tree})} \frac{2^{\deg(\vertex)-2}}{(\deg(\vertex)-1)!}\,\gamma_{\deg(\vertex)-1}\prod_{\substack{\bvertex \in \vsetboundary(\tilde{\tree})\\\bvertex\neq\text{root}}} \frac{2^{\deg(\bvertex)-2}}{(\deg(\bvertex)-1)!}\,t_{\deg(\bvertex)-1}[\mu]\\
	&=\sum_{\tilde{\tree}\in\widetilde{\treeset}^{\text{unlab}}} \prod_{\vertex\in\vsetinner(\tilde{\tree})} \frac{2^{\deg(\vertex)-2}}{(\deg(\vertex)-1)!}\,\gamma_{\deg(\vertex)-1}\prod_{\substack{\bvertex \in \vsetboundary(\tilde{\tree})\\\bvertex\neq\text{root}}} \frac{2^{\deg(\bvertex)-2}}{(\deg(\bvertex)-1)!}\,t_{\deg(\bvertex)-1}[\mu],
\end{align}
where $\widetilde{\treeset}^{\text{unlab}}$ is the set of bicolored plane trees with inner vertices of degree at least $3$ and white vertices (unlabeled this time) of any degree and rooted on a white leaf.

\begin{proof}[Proof of Corollary~\ref{cor:string}]
Since $\tilde{\tree}\in \widetilde{\treeset}^{\text{unlab}}$ admits a bijective decomposition at the vertex adjacent to the root, which may be an inner vertex of degree $d+1 \geq 3$ or a boundary vertex with degree $d+1\geq 1$, we immediately deduce the string equation
\begin{align}
	R[\mu]&= \sum_{d \geq 2} \frac{2^{d-1}}{d!} \gamma_{d}\, R[\mu]^{d} + \sum_{d \geq 0} \frac{2^{d-1}}{d!} t_{d}[\mu] R[\mu]^{d}.
\end{align}
To see that is solves $Z(R[\mu];\mu] = 0$, it suffices to expand the Bessel functions,
\begin{align*}
	Z(r;\mu] &= \frac{\sqrt{r}}{\sqrt{2}\pi}J_1\left(2\pi\sqrt{2r}\right)-\int I_0\left(L\sqrt{2r}\right)\dd{\mu(L)} \\
	&= \sum_{d\geq 1} \frac{2^{d-1}}{d!} (-1)^{d+1} \frac{\pi^{2d-2}}{(d-1)!}r^d - \sum_{d\geq 0} \frac{2^{d-1}}{d!}  \int  \frac{2}{d!} \left(\frac{L}{2}\right)^{2d} \rmd \mu(L)\\
	&= r - \sum_{d \geq 2} \frac{2^{d-1}}{d!} \gamma_{d}\, r^{d} - \sum_{d \geq 0} \frac{2^{d-1}}{d!} t_{d}[\mu] r^{d}.
\end{align*}
\end{proof}

\section{Geodesic distances in triply cusped spheres}\label{sec:distances}

So far we have used the tree bijection to compute the Weil--Petersson volume of $\mathcal{M}_{0,n+1}(0,\mathbf{L})$, which amounts to integrating the constant function against the measure $\operatorname{Vol}_{\WP}$. 
More generally, one may try to integrate any statistic $f : \mathcal{M}_{0,n+1}(0,\mathbf{L})\to \R$ against $\operatorname{Vol}_{\WP}$ by considering the corresponding function
\begin{align}
	f \circ \mathsf{Glue}(\tree, \cdot) : \mathcal{A}_\tree(\mathbf{L}) \to \R
\end{align}
on the polytope $\mathcal{A}_\tree(\mathbf{L})$ for $\tree\in \treeset_n(\mathbf{L})$.
Moreover, if $f \circ \mathsf{Glue}(\tree, \cdot)$ extends to an integrable function $f_\tree : \mathcal{A}_{\tree,\emptyset}^{\mathrm{anti}}(\mathbf{L}) \to\R$ on the larger polytope without Delaunay conditions on the angles, then the inclusion-exclusion principle of Section~\ref{sec:inclexcl} shows that
\begin{align}
	\int_{\mathcal{M}_{0,1+n}(0,\mathbf{L})} f(\mathsf{X}) \rmd \operatorname{Vol}_{\WP}(\mathsf{X}) = \sum_{\tree\in \treeset_n(\mathbf{L})} \sum_{\mathsf{A}\subset \edgeset(\tree)} (-1)^{|\mathsf{A}|} \int_{\mathcal{A}_{\tree,\mathsf{A}}^{\mathrm{anti}}(\mathbf{L})} f_\tree(\lambda) \rmd \operatorname{Vol}_\tree(\lambda),\label{eq:integrationantidelaunay}
\end{align}
where $\operatorname{Vol}_\tree$ is the properly normalized Lebesgue measure of Lemma~\ref{lem:volumeformpolytope}.

In this section we focus on the distance difference $D : \mathcal{M}_{0,3+n}(0,0,0,\mathbf{L})\to \R$. 
Recall from the introduction that $D(\mathsf{X})$ is defined as $D(\mathsf{X}) = d_{\mathsf{X}}(h_0,h_1) - d_{\mathsf{X}}(h_0,h_2)$ in terms of the unit-length horocycles $h_0,h_1,h_2$ around the first three cusps, where we have changed the indices to be compatible with the labeling of the tree $\tree$.
We will first make sure $D \circ \mathsf{Glue}(\tree, \cdot)$ can be reasonably expressed in terms of the decoration of the spine $\tree \in\treeset_{2+n}(0,0,\mathbf{L})$.
Note that in this case $h_0$ is the horocycle around the origin and $h_1, h_2$ are horocycles associated to the boundary vertices with labels $1$ and $2$ of $\tree$. 

\subsection{Distances in terms of tree decoration}

Given an edge $\edge\in\edgeset(\tree)$ and a point $x \in \mathsf{X}$ on this edge, we let $\alpha \in (0,\pi/2]$ be the angle between $\edge$ and any of the two shortest geodesic from $x$ to the origin.
Then we define the \emph{edge distance} to be
\begin{align}
	d_\edge = d_{\mathsf{X}}(x,h_0) + \log \sin \alpha.\label{eq:edgedist}
\end{align}
It follows from hyperbolic trigonometry that this is independent of the choice of the point $x$, and that $d_\edge$ is half the length of the unique geodesic arc starting and ending at the origin dual to $\edge$.
In case $\edge$ intersects this arc, $d_\edge = d_{\mathsf{X}}(\edge,h_0)$.

If $\edge \in\edgeset(\tree)$ is incident to a boundary vertex corresponding to a cusp with unit-length horocycle $h_i$, then it is easily verified that its edge distance agrees with the distance from $h_i$ to the origin,
\begin{align}
	d_\edge=d_{\mathsf{X}}(h_i,h_0).
\end{align} 
Therefore, if we let $\edge_1, \ldots, \edge_k$ for some $k\geq 1$ be the unique sequence of edges connecting the first two boundary vertices in $\tree$, then
\begin{align}
	D(\mathsf{X}) = d_{\edge_1} - d_{\edge_k} = \sum_{k=1}^{k-1} (d_{\edge_k} - d_{\edge_{k+1}}).
\end{align}
If $\edge_{k} = \edge_{\vertex,1}$ and $\edge_{k+1} = \edge_{\vertex,2}$ share an inner vertex $\vertex$, then the formula \eqref{eq:edgedist} with $x$ at the position of $\vertex$ implies that
\begin{align}
	d_{\edge_k} - d_{\edge_{k+1}} = \log \sin \varphi(\edge_{\vertex,1}) - \log \sin \varphi(\edge_{\vertex,2}).\label{eq:ddiffinner}
\end{align}
Similarly, we will see that for two consecutive edges sharing a boundary vertex $\bvertex$, the difference $d_{\edge_k} - d_{\edge_{k+1}}$ can be expressed in terms of the boundary vertex decoration $(w_{\bvertex,j},v_{\bvertex,j})_{j=1}^{\deg(\bvertex)}$.

In the following we will concentrate on the function $f(\mathsf{X}) = \exp(2 u D(\mathsf{X}))$ for fixed $u\in \R$ of sufficiently small absolute value.
By \eqref{eq:ddiffinner}, the formula extends to an integrable function $f_\tree : \mathcal{A}_{\tree,\emptyset}^{\mathrm{anti}}(0,0,\mathbf{L}) \to\R$ because the angles $\varphi(\vec{\edge})$ are restricted to $(0,\pi)$.
We may thus invoke the integration formula \eqref{eq:integrationantidelaunay}.

To evaluate the right-hand side we combine the factorization of $f_\tree$ with that of the polytope $\mathcal{A}_{\tree,\mathsf{A}}^{\mathrm{anti}}(0,0,\mathbf{L})$ from Lemma~\ref{lem:antifactorization}.
Denoting by $\vertex_1,\ldots,\vertex_m \in \vsetinner(\tilde{\tree}) \cup \vsetboundary(\tilde{\tree})$ the vertices of the contracted tree $\tilde{\tree}$ encountered along the path between the first two boundary vertices, we obtain 
\begin{align*}
	\int_{\mathcal{A}_{\tree,\mathsf{A}}^{\mathrm{anti}}(\mathbf{L})} f_\tree(\lambda) \rmd \operatorname{Vol}_\tree(\lambda) &= \prod_{\substack{\vertex\in\vsetinner(\tilde{\tree})\\\vertex \notin \{\vertex_1,\ldots,\vertex_m\} }}\frac{2^{\deg{\vertex}-2}\pi^{2\deg(\vertex)-4}}{(2\deg(\vertex)-4)!} \prod_{\substack{\bvertex\in\vsetboundary(\tilde{\tree})\\\bvertex \notin \{\vertex_1,\ldots,\vertex_m\} }} \frac{2^{\deg{\vertex}-1}(L_\bvertex/2)^{2\deg(\bvertex)-2}}{((\deg(\bvertex)-1)!)^2} \prod_{i=1}^m E_{\vertex_i}(u), 
\end{align*}
where $E_{\vertex_i}(u)$ is the integral of $\exp(2u (d_{\edge} - d_{\edge'}))$, involving the edge distances of the entrance edge $\edge$ and the exit edge $\edge'$, over the polytope $\mathcal{A}_{\vertex_i}$.

\begin{figure}
	\centering
	\includegraphics[width=.8\linewidth]{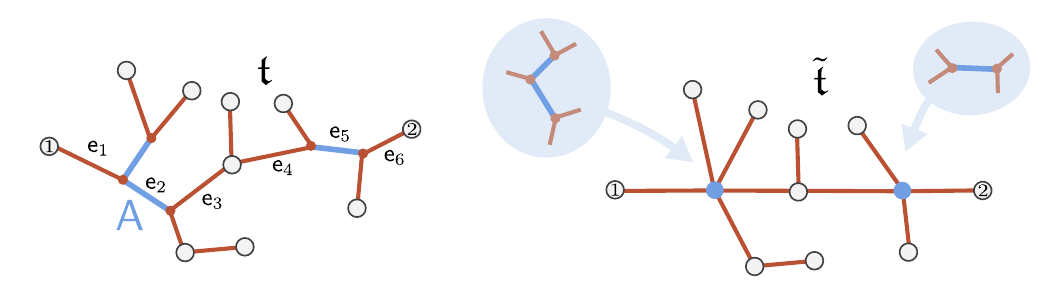}
	\caption{
		\label{fig:distancefactorization}
	}
\end{figure}

Suppose $\vertex_i \in \vsetboundary(\tilde{\tree})$ is a boundary vertex of degree $\deg(\vertex_i) = k \geq 2$.
Suppose the exit edge $\edge'$ is the $\ell$th edge, for $2\leq \ell \leq k$, if we start counting from the entrance edge $\edge$ in clockwise order around $\vertex_i$.
Then $E_{\vertex_i}(u)$ depends only on $k$, $\ell$ and the associated boundary length $L_{\vertex_i}$,
\begin{align}
	E_{\vertex_i}(u) = E_{k,\ell}(L_{\vertex_i},u).
\end{align}

Suppose otherwise that $\vertex_i \in \vsetinner(\tilde{\tree})$ is an anti-Delaunay vertex of degree $\deg(\vertex_i) = k \geq 3$.
Recall from Section~\ref{sec:inclexcl} that there is an associated subtree $\tree_{\vertex_i}$ with $k$ leafs, of which two are distinguished as the entrance and exit leaf.
Then $E_{\vertex_i}(u)$ depends only on this subtree $\tree_{\vertex_i}$ with distinguished leafs,
\begin{align}
	E_{\vertex_i}(u) = E_{\tree_{\vertex_i}}(u).
\end{align}

Recalling the definition of $\hat{X}_n(u;\mathbf{L})$ from \eqref{eq:Xhatn} in the introduction, the integration formula \eqref{eq:integrationantidelaunay} gives
\begin{align}
	\hat{X}_n(u;\mathbf{L})&=  \sum_{\tree\in \treeset_{2+n}(0,0,\mathbf{L})} \sum_{\mathsf{A}\subset \edgeset(\tree)} \Bigg[ (-1)^{|\mathsf{A}|} \prod_{\substack{\vertex\in\vsetinner(\tilde{\tree})\\\vertex \notin \{\vertex_1,\ldots,\vertex_m\} }}\frac{2^{\deg(\vertex)-2}\pi^{2\deg(\vertex)-4}}{(2\deg(\vertex)-4)!} \nonumber\\
	&\mkern200mu \prod_{\substack{\bvertex\in\vsetboundary(\tilde{\tree})\\\bvertex \notin \{\vertex_1,\ldots,\vertex_m\} }} \frac{2^{\deg(\vertex)-1}(L_\bvertex/2)^{2\deg(\bvertex)-2}}{((\deg(\bvertex)-1)!)^2} \prod_{i=1}^m E_{\vertex_i}(u)\Bigg] \nonumber \\
	&= \sum_{\tilde{\tree}\in \widetilde{\treeset}_{2+n}(0,0,\mathbf{L})} \prod_{\substack{\vertex\in\vsetinner(\tilde{\tree})\\\vertex \notin \{\vertex_1,\ldots,\vertex_m\} }}\frac{2^{\deg(\vertex)-2}\gamma_{\deg(\vertex)-1}}{(\deg(\vertex)-1)!} \prod_{\substack{\bvertex\in\vsetboundary(\tilde{\tree})\\\bvertex \notin \{\vertex_1,\ldots,\vertex_m\} }} \frac{2^{\deg(\bvertex)-2}t_{\deg(\bvertex)-1}(L_\bvertex)}{(\deg(\bvertex)-1)!}\,\prod_{i=1}^m \tilde{E}_{\vertex_i}(u),\label{eq:Xhatnsum}
\end{align}
with the new notation
\begin{align}
	\tilde{E}_{\vertex_i}(u) = \begin{cases} E_{\vertex_i}(u) & \vertex_i \in \vsetboundary(\tilde{\tree})\\ (-1)^{\deg(\vertex_i)-3} \sum_{\tree_{\vertex_i}}E_{\tree_{\vertex_i}}(u) & \vertex_i \in \vsetinner(\tilde{\tree})\end{cases},
\end{align}
where the sum is over all plane binary trees with a number of leafs equal to $\deg(\vertex_i)$. 

\subsection{Distance-dependent three-point function}

Following the procedure in Section~\ref{sec:stringeq}, the sum \eqref{eq:Xhatnsum} simplifies drastically at the level of the generating functions, because the contribution at each of the $m$ vertices along the path is independent of the others, 
\begin{align}
	\hat{X}(u;\mu] &= \sum_{n \geq 0} \frac{1}{n!} \int \hat{X}_n(u;\mathbf{L})\rmd \mu(L_1)\cdots \rmd \mu(L_n) \nonumber\\
	&= \sum_{m=0}^\infty \left( \hat{X}_{\mathrm{A}}(u,R[\mu]) + \int\rmd  \mu (L)\hat{X}_{\mathrm{B}}(L,u,R[\mu])\right)^m,\label{eq:hatXsum}
\end{align}
where we introduced the distance-dependent generating functions $\hat{X}_{\mathrm{A}}(u,r)$ and $\hat{X}_{\mathrm{B}}(L,u,r)$ around an anti-Delaunay vertex and boundary vertex respectively via
\begin{align}
	\hat{X}_{\mathrm{A}}(u,r) \coloneqq \sum_{\tree} (-1)^{|\vsetinner(\tree)|-1} r^{|\vsetinner(\tree)|} E_\tree(u), \qquad \hat{X}_{\mathrm{B}}(L,u,r) \coloneqq  \sum_{k=2}^\infty \sum_{\ell = 2}^k r^{k-2} E_{k,\ell}(L,u).
\end{align}
Here the sum in $\hat{X}_{\mathrm{A}}(u,r)$ is over all plane binary trees with a marked entrance and exit leaf, and $\vsetinner(\tree)$ denotes the non-leaf vertices of $\tree$.

In the next two subsections we will establish the following explicit formulas for $\hat{X}_{\mathrm{A}}(u,r)$ and $\hat{X}_{\mathrm{B}}(L,u,r)$.

\begin{proposition}\label{prop:hatXA}
	For an anti-Delaunay vertex we have the formal power series identity
	\begin{align*}
		\hat{X}_{\mathrm{A}}(u,r) = 1-\frac{2\pi u}{\sin(2\pi u)}\sum_{p=0}^\infty \frac{u^{2p}}{(2p+1)!!} \frac{\partial^{p+1}}{\partial r^{p+1}}\frac{\sqrt{r}}{\sqrt{2}\pi}J_1(2\pi\sqrt{2r}).
	\end{align*}
\end{proposition}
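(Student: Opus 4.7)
The plan is to compute $\hat X_{\mathrm{A}}(u,r)$ by decomposing each plane binary tree $\tree$ in the defining sum along its unique path from entrance leaf to exit leaf, integrating out the side subtrees first. If $v^\ast_1,\ldots,v^\ast_q$ (with $q\geq 1$) denote the inner vertices encountered along this path, then at each $v^\ast_i$ the three angles $\alpha_i,\beta_i,\gamma_i$ sum to $\pi$ (entrance, exit, side); the anti-Delaunay conditions between consecutive path triangles read $\beta_i+\alpha_{i+1}>\pi$; and the distance integrand factorizes as $\prod_i(\sin\alpha_i/\sin\beta_i)^{2u}$ thanks to the identity $d_{\edge_k}-d_{\edge_{k+1}}=\log\sin\varphi(\vec\edge_{\vertex,1})-\log\sin\varphi(\vec\edge_{\vertex,2})$ established earlier. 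Each side angle $\gamma_i$ is the root angle of an anti-Delaunay side subtree, which I would encode in a single generating function $\Psi(r,\theta)$ carrying the inclusion-exclusion sign and the $2^{k_s}$ normalization.

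The first substep is an explicit formula for $\Psi$. Decomposing a side subtree at its root inner vertex (if any) turns the anti-Delaunay condition on the root edge into the constraint $\alpha+\beta<\theta$ in the child angles, yielding the Volterra integral equation
\[
  \Psi(r,\theta) = r - 2\int_{\alpha+\beta<\theta,\,\alpha,\beta>0}\Psi(r,\alpha)\,\Psi(r,\beta)\,\rmd\alpha\,\rmd\beta.
\]
Differentiating in $\theta$ converts this into the convolution ODE $\partial_\theta\Psi = -2\,\Psi\ast_\theta\Psi$, whose Laplace transform $\hat\Psi(s)$ satisfies the quadratic $2\hat\Psi^2+s\hat\Psi = r$. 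Inverting the branch $\hat\Psi(s) = (\sqrt{s^2+8r}-s)/4$ by the standard identity $\mathcal{L}\{J_1(at)/t\}(s) = (\sqrt{s^2+a^2}-s)/a$ with $a = 2\sqrt{2r}$ gives the closed form
\[
  \Psi(r,\theta) \;=\; \frac{\sqrt{2r}}{2\theta}\,J_1\bigl(2\sqrt{2r}\,\theta\bigr) \;=\; \frac{\pi^2}{\theta^2}\,f\!\left(\frac{r\theta^2}{\pi^2}\right),
\]
so in particular $\Psi(r,\pi) = f(r)$ with $f(r)=\frac{\sqrt r}{\sqrt 2\pi}J_1(2\pi\sqrt{2r})$ the angle part of $Z(r;\mu]$.

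With $\Psi$ in hand, $\hat X_{\mathrm{A}}(u,r)$ reduces to the sum over path length
\[
  \sum_{q\geq 1}(-1)^{q-1}\,2^q\int\prod_{i=1}^{q}\!\left(\frac{\sin\alpha_i}{\sin\beta_i}\right)^{\!2u}\!\Psi\bigl(r,\pi-\alpha_i-\beta_i\bigr)\,\rmd\alpha_i\,\rmd\beta_i
\]
over the region defined by $\alpha_i,\beta_i>0$, $\alpha_i+\beta_i<\pi$ and $\beta_i+\alpha_{i+1}>\pi$. Substituting $\gamma_i = \pi-\alpha_i-\beta_i$ and integrating out the $\beta_i$'s (which under the anti-Delaunay constraints form the decreasing chain $\beta_1>\beta_2+\gamma_2>\cdots$) exposes a transfer-operator structure, so that $\sum_q$ is a geometric series in an integral operator $\mathcal K_u$ acting on a space of functions of a single angle variable. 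The main obstacle will be to diagonalize $\mathcal K_u$ explicitly: I expect the eigenfunctions on $(0,\pi)$ to be Fourier-sine modes, with the eigenvalues computable via the elementary integrals $\int_0^\pi(\sin\sigma)^{2u}\sin(n\sigma)\,\rmd\sigma$ paired with the Bessel expression for $\Psi$. The denominator $\sin(2\pi u)$ in the Proposition should then emerge from the pole structure of the resolvent $(1-\mathcal K_u)^{-1}$ at these eigenvalues, while the Bessel-derivative sum $\sum_p u^{2p}f^{(p+1)}(r)/(2p+1)!!$ should arise from the Taylor expansion of $\Psi(r,\cdot)$ around its full-angle value $\theta=\pi$ via the rescaling identity above. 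A fallback, should the spectral bookkeeping prove too intricate, is to recognize the Proposition's formula as the unique solution of the fixed-point equation $\hat X_{\mathrm{A}}(u,r) = 2\int(\sin\alpha/\sin\beta)^{2u}\Psi(r,\pi-\alpha-\beta)\,Q(u,r,\beta)\,\rmd\alpha\,\rmd\beta$ for an auxiliary $Q$ satisfying a simpler one-dimensional recursion, and then verify by direct substitution.
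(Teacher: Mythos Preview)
Your setup is essentially the same as the paper's: decompose along the entrance--exit path, integrate out the side subtrees into a root-angle generating function, and reduce $\hat X_{\mathrm A}$ to a sum over path length $p$ of an integral over interleaved angles with the factor $\prod_i(\sin\alpha_i/\sin\beta_i)^{2u}\,\Psi(r,\beta_i-\alpha_i)$ on the ordered simplex $0<\alpha_1<\beta_1<\cdots<\alpha_p<\beta_p<\pi$. Your derivation of $\Psi$ via a Volterra equation and Laplace transform is a nice alternative to the paper's direct simplex-volume computation (Lemma~\ref{lem:antifactorization}), and you arrive at the same Bessel formula $\Psi=F$.

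The gap is the next step. You propose to treat the remaining integral as a geometric series in a transfer operator $\mathcal K_u$ and to diagonalize it in Fourier sine modes. But the kernel couples $\alpha_{i+1}$ and $\beta_{i+1}$ through both the Bessel factor $\Psi(r,\pi-\alpha_{i+1}-\beta_{i+1})$ and the non-symmetric weight $(\sin\alpha_{i+1}/\sin\beta_{i+1})^{2u}$; there is no reason to expect sine modes to be eigenfunctions, and the putative eigenvalue integrals $\int_0^\pi(\sin\sigma)^{2u}\sin(n\sigma)\,\rmd\sigma$ do not match the structure of the kernel. The fallback ``verify by direct substitution'' is not a proof either, since you have not written down the candidate $Q$ or the one-dimensional recursion explicitly.

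The paper's key idea, which you are missing, is a \emph{shift-symmetry averaging trick}. Because the Lebesgue measure $\rmd A$ and the $\Psi(r,\beta_i-\alpha_i)$ factors are invariant under the common translation $\alpha_i\mapsto\alpha_i-\gamma$, $\beta_i\mapsto\beta_i-\gamma$ (mod $\pi$), one may average the integrand over $\gamma\in[0,\pi]$. A separate contour-integration lemma then evaluates
\[
\frac{1}{\pi}\int_0^\pi\rmd\gamma\,\prod_{i=1}^p\Bigl(\frac{\sin(\alpha_i-\gamma)}{\sin(\beta_i-\gamma)}\Bigr)^{2u}\mathbf 1_{\{\gamma\notin[\alpha_i,\beta_i]\}}
=\frac{\sin\!\bigl(2u[\pi-\sum_i(\beta_i-\alpha_i)]\bigr)}{\sin(2\pi u)},
\]
so the troublesome sine-ratio product is replaced by a single $\operatorname{sinc}$ factor depending only on $\theta_0=\pi-\sum_i(\beta_i-\alpha_i)$. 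This decouples the variables completely, and the remaining integral is a straightforward Laplace-transform computation against $\hat f(y)=y-\sqrt{y^2+8r}$. The $1/\sin(2\pi u)$ thus comes from this averaging identity, not from a resolvent pole.
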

\begin{proposition}\label{prop:hatXB}
	For a boundary vertex we have the formal power series identity
	\begin{align*}
		\hat{X}_{\mathrm{B}}(L,u,r) = \frac{2\pi u}{\sin(2\pi u)}\sum_{p=0}^\infty \frac{u^{2p}}{(2p+1)!!}\frac{\partial^{p+1}}{\partial r^{p+1}}  I_0(L\sqrt{2r}).
	\end{align*}
\end{proposition}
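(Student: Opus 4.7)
The plan is to prove Proposition~\ref{prop:hatXB} by the same three-step program that naturally underlies Proposition~\ref{prop:hatXA}: first derive an explicit formula for the integrand $e^{2u(d_\edge - d_{\edge'})}$ in terms of the polytope coordinates at a boundary vertex, then compute the resulting simplex integral, and finally resum over $(k,\ell)$ to identify the Bessel generating function.

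First, I would analyze the local geometry of the spine near a boundary vertex $\vertex$ of degree $k$ with $L_\vertex = L > 0$. Using the funnel representation of Figure~\ref{fig:treebijectionboundary}, with the closed boundary geodesic aligned along the imaginary axis of $\mathbb{H}$ and the Fuchsian generator $z \mapsto e^L z$, the boundary ribs incident to $\vertex$ lift to a sequence of concentric semicircles whose radius log-ratios encode the parameters $w_{\vertex,j}$ and $v_{\vertex,j}$. Applying the edge-distance formula \eqref{eq:edgedist} at a convenient point on $\edge_{\vertex,j}$, such as its intersection with the closed boundary geodesic, and tracking the positions of the relevant lifts of the horocycle $h_0$ around the origin, one can express $d_{\edge_{\vertex,1}} - d_{\edge_{\vertex,\ell}}$ explicitly as a function of $(w_{\vertex,j}, v_{\vertex,j})_{j=1}^k$.

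Second, substituting this formula into
\begin{equation*}
E_{k,\ell}(L,u) = 2^{k-1} \int_{(\Delta_k^{(L/2)})^2}\! \exp\bigl(2u (d_{\edge_{\vertex,1}} - d_{\edge_{\vertex,\ell}})\bigr) \prod_{j=1}^{k-1}\dd w_{\vertex,j}\dd v_{\vertex,j},
\end{equation*}
the integral should decouple over the two simplex factors of $\mathcal{A}_\vertex$. Each simplex integral is, after a change of variables, a standard exponentially-weighted Dirichlet integral evaluable in terms of the Kummer confluent hypergeometric function ${}_1F_1$. Third, I would assemble the double sum $\hat{X}_B(L,u,r) = \sum_{k \geq 2}\sum_{\ell=2}^k r^{k-2} E_{k,\ell}(L,u)$. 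Summing over $\ell$ with fixed $k$ collapses via hypergeometric summation, and the resulting $k$-series with $r^{k-2}$ weights identifies, through the series expansion $I_0(L\sqrt{2r}) = \sum_k (L^2 r/2)^k/(k!)^2$, with the differential operator $\sum_p \tfrac{u^{2p}}{(2p+1)!!}\tfrac{\partial^{p+1}}{\partial r^{p+1}}$ applied to $I_0(L\sqrt{2r})$. The overall prefactor $\tfrac{2\pi u}{\sin 2\pi u}$ should emerge from the same sine-reflection identity that appears in $\hat{X}_A$, reflecting a shared origin in the combinatorics of path vertices.

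The main obstacle is the first step: obtaining a clean closed-form expression for $d_{\edge_{\vertex,1}} - d_{\edge_{\vertex,\ell}}$ in the funnel picture. A naive guess such as $\sum_{j=1}^{\ell-1}(w_{\vertex,j} + v_{\vertex,j}) - L/2$ already accounts for the $\cosh(uL)$-dependence at $k=\ell=2$, but the complete formula must produce the additional $\tfrac{2\pi u}{\sin 2\pi u}$ factor, which strongly suggests a more subtle hyperbolic expression (for example, involving $\log\sinh$-type terms analogous to those in the shear formula~\eqref{eq:shearv} for non-ideal corners). As a sanity check, one can verify the result against the consistency relation $\hat{X}_A + \int \dd\mu\, \hat{X}_B = 1 - \tfrac{2\pi u}{\sin 2\pi u}\,\eta(u;\mu]$ implied by Theorem~\ref{thm:dist} and the string equation, which collapses the proof of Proposition~\ref{prop:hatXB} to Proposition~\ref{prop:hatXA} modulo explicit identification of the $I_0$-contribution.
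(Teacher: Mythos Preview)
Your overall plan---derive the distance-difference formula at the boundary vertex, integrate over the two simplex factors, then resum in $(k,\ell)$---is exactly the paper's route. But the proposal has a real gap at precisely the step you flag as the main obstacle, and your guesses about how it resolves are off in ways worth naming.

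The missing formula is
\[
e^{d_{j+1}-d_j} \;=\; e^{v_j}\,\frac{1-e^{-w_{j+1}}}{e^{w_j}-1},
\]
which follows directly from the half-shear and corner-shear relations already computed in Section~\ref{sec:spinetoshear} (namely \eqref{eq:shearw} and \eqref{eq:shearv}) via $d_{\edge'}+z_{\vec{\edge}'} = d_\edge - z_{\vec{\edge}} - z_\corner$. Taking products gives
\[
e^{2u(d_\ell-d_1)} \;=\; e^{2u\sum_{j=1}^{\ell-1} v_j}\, e^{-2u\sum_{j=2}^\ell w_j}\,(1-e^{-w_\ell})^{2u}(1-e^{-w_1})^{-2u}.
\]
So the integrand does decouple over the two simplices, but the $w$-integral is not a Kummer ${}_1F_1$ integral: after Laplace transforming in $L$ and substituting $x=e^{-w}$ it becomes a product of Beta integrals $\int_0^1 x^{2z-1}(1-x)^{\pm 2u}\,\mathrm{d}x$. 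This is where the prefactor $\tfrac{2\pi u}{\sin 2\pi u}$ actually enters, via the Gamma reflection identity $\Gamma(1+2u)\Gamma(1-2u)=\tfrac{2\pi u}{\sin 2\pi u}$---a different mechanism from the contour argument of Lemma~\ref{lem:integral_ident1} that produces the same factor in $\hat{X}_A$. The upshot is the clean relation $\tilde I_{k,\ell}(u,L)=\tfrac{2\pi u}{\sin 2\pi u}\,I_{k,\ell}(-u,L)$, after which the $\ell$-sum requires a nontrivial hypergeometric identity (the paper's Lemma~\ref{lem:hypident}) rather than a routine collapse.

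Finally, your proposed sanity check is circular: Theorem~\ref{thm:dist} is deduced \emph{from} Propositions~\ref{prop:hatXA} and~\ref{prop:hatXB}, so it cannot be invoked to reduce one to the other.
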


Before turning to their proofs, we can conclude Theorem~\ref{thm:dist}.

\begin{proof}[Proof of Theorem~\ref{thm:dist}]
Comparing the formulas in Proposition~\ref{prop:hatXA} and Proposition~\ref{prop:hatXB} with the definition \eqref{eq:etadef} of $\eta(u;\mu]$, we find that
\begin{align}
	\hat{X}_{\mathrm{A}}(u,r) + \int \rmd\mu(L) \hat{X}_{\mathrm{B}}(L,u,r) = 1 - \frac{2\pi u}{\sin(2\pi u)} \eta(u;\mu].
\end{align}
Hence the sum in \eqref{eq:hatXsum} evaluates to
\begin{align*}
	\hat{X}(u;\mu] = \sum_{m=0}^\infty \left(1 - \frac{2\pi u}{\sin(2\pi u)} \eta(u;\mu]\right)^m = \frac{\sin(2\pi u)}{2\pi u \,\eta(u;\mu]}
\end{align*}
as claimed.
\end{proof}

\subsection{Around an inner vertex}

This subsection is devoted to proving Proposition~\ref{prop:hatXA}.
We start by identifying an explicit integral expression for $\hat{X}_{\mathrm{A}}(u,r)$.

\begin{lemma}\label{lem:hatX_A_as_angle_integral}
	\begin{align}
		\hat{X}_{\mathrm{A}}(u,r)= - \sum_{p=1}^\infty \int \rmd A \prod_{i=1}^p \left(-4F(r,\beta_i-\alpha_i)\left(\frac{\sin\alpha_i}{\sin\beta_i}\right)^{2u} \right),
	\end{align}
	where $\rmd A = \rmd \alpha_1\rmd \beta_1 \cdots \rmd \alpha_p \rmd\beta_p$ on $\{0<\alpha_1<\beta_1 < \cdots < \alpha_p < \beta_p <\pi \}$ and
	\begin{align}
		F(r,\theta)=\frac{\sqrt{r}}{\sqrt{2}\theta} J_1(2\theta\sqrt{2r}).
	\end{align}
\end{lemma}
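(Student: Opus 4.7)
The plan is to expand the definition of $\hat{X}_{\mathrm{A}}(u,r)$ by decomposing each contributing plane binary tree along its unique path from the entrance leaf to the exit leaf, absorb the off-path subtrees via a Catalan--Bessel identity, and change variables at the on-path vertices to produce the claimed angular integral.

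First I would group the plane binary trees $\tree$ with marked entrance and exit leaves by the number $p \geq 1$ of on-path inner vertices $v_1, \ldots, v_p$, the off-path subtrees $T^{(1)}, \ldots, T^{(p)}$ attached at each $v_i$ with $K_i = |\vsetinner(T^{(i)})|$ inner vertices, and the planar side $\sigma_i \in \{L, R\}$ of $T^{(i)}$ relative to the path at $v_i$. This decomposition is bijective, yielding $2^p \prod_i C_{K_i}$ trees for given $(p, (K_i))$ with $C_K$ the $K$-th Catalan number, so that the outer sum over $\tree$ reorganizes into independent contributions at each on-path vertex.

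Next I would invoke the $\widetilde{\varphi}$-parametrization from the proof of Lemma~\ref{lem:antifactorization}, rooting $\tree$ at the entrance leaf to identify the polytope with the open simplex $\{\widetilde{\varphi}_j > 0,\ \sum_j \widetilde{\varphi}_j = \pi\}$ equipped with $2^{p+\sum K_i}$ times Lebesgue measure. At each on-path vertex $v_i$, I would set $\alpha_i$ equal to the angle at $v_i$ of the incoming path edge and $\beta_i = \pi - \varphi(\vec{\edge}_i^{\mathrm{path}})$, so that the off-path opening angle is $\theta_i = \beta_i - \alpha_i$. A direct computation then gives $\widetilde{\varphi}_0 = \alpha_1$, $\widetilde{\varphi}(\vec{\edge}_i^{\mathrm{path}}) = \alpha_{i+1} - \beta_i$ for $1 \leq i < p$, and $\widetilde{\varphi}(\vec{\edge}_p^{\mathrm{path}}) = \pi - \beta_p$; positivity of these, which is precisely the anti-Delaunay condition on the on-path internal edges of $\tree$, is equivalent to the ordering $0 < \alpha_1 < \beta_1 < \cdots < \alpha_p < \beta_p < \pi$, and the Jacobian from $(\alpha_i, \beta_i)$ to $(\widetilde{\varphi}_0, (\widetilde{\varphi}(\vec{\edge}_i^{\mathrm{path}}))_{i<p}, (\theta_i)_i)$ is bidiagonal with $\pm 1$ entries and determinant one.

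With this setup the integrand factorizes as $\prod_i (\sin \alpha_i / \sin \beta_i)^{2u}$ along the path multiplied, at each $v_i$, by the Lebesgue volume $\theta_i^{2K_i}/(2K_i)!$ of the $2K_i$-dimensional simplex of off-path $\widetilde{\varphi}$'s summing to $\theta_i$. Summing over the off-path trees at $v_i$ against the weight $C_{K_i} (-2r)^{K_i}$ yields
\begin{align*}
\sum_{K \geq 0} C_{K} (-2r)^{K} \frac{\theta^{2K}}{(2K)!} = \sum_{K \geq 0} \frac{(-2r)^{K}}{K!(K+1)!} \theta^{2K} = \frac{F(r, \theta)}{r},
\end{align*}
which follows from the Taylor expansion of $J_1$. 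Combining this with the $2^p$ side multiplicity and the sign $(-1)^{p-1 + \sum K_i}$, and using $(-1)^{p-1} 4^{p} = -(-4)^{p}$, immediately collapses the whole expansion into the asserted form. The main technical obstacle is the careful bookkeeping in the third step: in particular, one has to verify that positivity of each on-path $\widetilde{\varphi}$ indeed corresponds to the strict inequality $\beta_i < \alpha_{i+1}$ furnished by the anti-Delaunay condition on the corresponding contracted path edge, and that no spurious factors enter the Jacobian or the measure normalization.
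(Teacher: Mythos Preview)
Your proposal is correct and follows essentially the same approach as the paper's proof: decompose the binary tree along the entrance--exit path, recognise the generating function of off-path anti-Delaunay subtrees with fixed root angle as $F(r,\theta)/r$, and collect the factors of $2$ and $-1$. The only cosmetic difference is your choice of angle variables: you take $\alpha_i$ to be the \emph{incoming} path angle at $v_i$ and $\beta_i=\pi-\varphi(\vec{\edge}_i^{\mathrm{path}})$, whereas the paper takes $\alpha_i$ to be the \emph{outgoing} path angle and $\beta_i$ its sum with the off-path angle (and labels the path vertices in reverse order). The two parametrizations are related by $(\alpha_i,\beta_i)\leftrightarrow(\pi-\beta_i,\pi-\alpha_i)$, under which the domain is preserved and the sine ratio flips, so your version lands directly on the statement's $(\sin\alpha_i/\sin\beta_i)^{2u}$ while the paper's lands on $(\sin\beta_i/\sin\alpha_i)^{2u}$, which is equivalent.
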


\begin{proof}
	Let us start by giving an interpretation to the series $F(r,\theta)$. 
	From the proof of Lemma~\ref{lem:antifactorization} it follows that the polytope of an anti-Delaunay tree with $k$ inner vertices and angle $\varphi(\vec{\edge}_0) = \theta$ on the root instead of $\pi$, is related to simplex $\Delta^{(\theta)}_{2k+1}$ of size $\theta$.
	Therefore its volume (including the power of $2$ in the normalization) is $2^{k} \theta^{2k} / (2k)!$.
	Hence, the generating function of such trees with weight $r$ per non-root leaf and weight $-1$ per inner vertex is 
	\begin{align}
		\sum_{k=0}^\infty (-1)^k\frac{(2k)!}{k!(k+1)!} 2^{k} \frac{\theta^{2k}}{(2k)!} r^{k+1} = \frac{\sqrt{r}}{\sqrt{2}\theta} J_1(2\theta \sqrt{2r}) = F(r,\theta).
	\end{align}

	\begin{figure}[h]
	\centering
	\includegraphics[width=.4\linewidth]{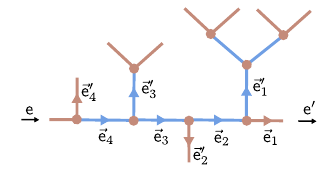}
	\caption{An example of a plane binary tree $\tree$ with a marked entrance and exit, in this case with $p=4$ vertices on the path.
		\label{fig:antidelaunaytree-entranceexit}
	}
\end{figure}
	Recall that we should sum over all plane binary trees $\tree$ with a marked entrance and exit leaf.
	Let $p \geq 1$ be the number of inner vertices of $\tree$ along the path from entrance to exit and let $\vec{\edge}_1,\ldots,\vec{\edge}_p$ be the edges making up the path in reverse order, pointing towards the exit (see Figure~\ref{fig:antidelaunaytree-entranceexit}).
	Let further $\vec{\edge}_i'$ be the oriented edge pointing away from the path and sharing its starting point with $\vec{\edge}_i$.
	Then we write for $i=1,\ldots,p$,  
	\begin{align}
		\alpha_i = \varphi(\vec{\edge}_{i}), \quad \beta_i = \varphi(\vec{\edge}_{i}) + \varphi(\vec{\edge}_{i}').
	\end{align}
	From our discussion in the proof of Lemma~\ref{lem:antifactorization} it follows that 
	\begin{align}
		0 < \alpha_1 < \beta_1 < \cdots < \alpha_p < \beta_p < \pi.
	\end{align}
	Moreover
	\begin{align}
		\exp(2u (d_{\edge}-d_{\edge'})) = \prod_{i=1}^p \left(\frac{\sin( \varphi(\vec{\edge}_{i})+\varphi(\vec{\edge}_{i}'))}{\sin \varphi(\vec{\edge}_i)}\right)^{2u} = \prod_{i=1}^p \left(\frac{\sin\beta_{i}}{\sin\alpha_i}\right)^{2u}.
	\end{align}
	Since we can bijectively encode $\tree$ by the $p$-tuple of subtrees whose roots are $\vec{\edge}_i$, $i=1,\ldots,p$, and those subtrees have angle $\varphi(\vec{\edge}_i') = \beta_i - \alpha_i$ on their root, we find
	\begin{align}
		\hat{X}_{\mathrm{A}}(u,r) = - \sum_{\tree} (-1)^{|\vsetinner(\tree)|} r^{|\vsetinner(\tree)|} E_\tree(u) &= - \sum_{p=1}^\infty (-1)^p 2^p \int\rmd A \prod_{i=1}^p \left(2F(r,\beta_i-\alpha_i)\left(\frac{\sin\beta_{i}}{\sin\alpha_i}\right)^{2u}\right),
	\end{align}
	where the factor of $2^p$ is due to the normalization of the volume measure $\operatorname{Vol}_\tree$ compared to $\rmd A$ and the factor of $2$ in the product takes into account that the subtrees can be on the left or the right of the path.
\end{proof}

To evaluate the integral in the previous lemma, we need the following two somewhat curious integration identities.

\begin{lemma}\label{lem:integral_ident1}
	For any $0\leq \alpha_1 \leq \beta_1\leq \cdots \leq \alpha_p \leq \beta_p \leq \pi$ and $u\in[0,1/2)$ we have the identity
	\begin{equation}\label{eq:sineratioaverage}
		\frac{1}{\pi} \int_0^\pi \rmd \gamma \prod_{i=1}^p \left(\frac{\sin(\alpha_i-\gamma)}{\sin(\beta_i-\gamma)}\right)^{2u}\ind_{\{\gamma \notin [\alpha_i,\beta_i]\}} = \frac{\sin\left(2u\left[\pi - \sum_{i=1}^p(\beta_i-\alpha_i)\right]\right)}{\sin(2\pi u)}.
	\end{equation}
\end{lemma}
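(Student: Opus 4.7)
I would evaluate the integral via analytic continuation of the integrand together with contour deformation. Define $G(\gamma) = \prod_{i=1}^p \bigl(\sin(\alpha_i - \gamma)/\sin(\beta_i - \gamma)\bigr)^{2u}$, starting from its unambiguous positive real values on the gaps $[0,\pi]\setminus\bigcup_i[\alpha_i,\beta_i]$. Since the zeros and poles of the individual factors lie only at $\alpha_i + \pi\Z$ and $\beta_i + \pi\Z$, $G$ extends analytically to $\C\setminus\bigcup_{i,k}([\alpha_i,\beta_i] + k\pi)$ and is $\pi$-periodic. Two computations then serve as the key inputs. First, by comparing leading exponentials in $\sin(\alpha - \gamma) = (e^{i(\alpha-\gamma)} - e^{-i(\alpha-\gamma)})/(2i)$ as $\operatorname{Im}\gamma \to \pm\infty$, one gets $\lim_{\operatorname{Im}\gamma \to \pm\infty} G(\gamma) = e^{\mp 2iu\Sigma}$, with $\Sigma := \sum_i(\beta_i - \alpha_i)$. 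Second, a local analysis near $\alpha_i$ using $\sin(\alpha_i - \gamma)/\sin(\beta_i - \gamma) \approx -(\gamma - \alpha_i)/\sin(\beta_i - \alpha_i)$ shows that the boundary values of $G$ on the cut $[\alpha_i,\beta_i]$ satisfy $G_\pm(x) = e^{\mp 2\pi i u} F(x)$, where $F(x) := \prod_j |\sin(\alpha_j - x)/\sin(\beta_j - x)|^{2u}$ is the positive real continuation; crucially, the ordering hypothesis guarantees that on the cut $[\alpha_i,\beta_i]$ only the $i$-th factor changes sign, so all other factors remain real and positive there.

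Next I would apply Cauchy's theorem to the rectangle with vertices $0,\pi,\pi + iY, iY$, traversed counterclockwise and indented slightly above the real axis so as to stay on the $+$ side of the cuts. Since $G$ is analytic inside, the contour integral vanishes; the two vertical sides cancel by $\pi$-periodicity; and sending $Y\to\infty$ reduces everything to $\int_0^\pi G_+(x)\,\rmd x = \pi e^{-2iu\Sigma}$. A symmetric argument on the mirror rectangle below the real axis yields $\int_0^\pi G_-(x)\,\rmd x = \pi e^{+2iu\Sigma}$.

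To conclude, I would split each of these integrals into its gap and cut contributions. Writing $I$ for the LHS of the identity we want to prove and $S := \sum_i \int_{\alpha_i}^{\beta_i} F(x)\,\rmd x$, each equation becomes $\pi I + e^{\mp 2\pi i u}\, S = \pi e^{\mp 2iu\Sigma}$. Solving this $2\times 2$ linear system and invoking the elementary identity $e^{-2iu\Sigma}\sin(2\pi u) - e^{-2\pi i u}\sin(2u\Sigma) = \sin(2u(\pi - \Sigma))$ (a direct expansion of exponentials into sines and cosines) gives the claim.

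The main delicate step is the branch bookkeeping needed to justify the asymptotic and boundary-value formulas for $G$; the interval ordering $0\leq\alpha_1\leq\beta_1\leq\cdots\leq\beta_p\leq\pi$ is essential to ensure that on each cut only the single adjacent factor contributes a phase jump, while the condition $u \in [0,1/2)$ guarantees both integrability at the branch points and non-vanishing of the determinant $\sin(2\pi u)$ of the linear system.
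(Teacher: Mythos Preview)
Your proposal is correct and follows essentially the same contour-deformation approach as the paper: analytically continue the $\pi$-periodic integrand off the real axis, use the asymptotic $G(\gamma)\to e^{\mp 2iu\Sigma}$ as $\operatorname{Im}\gamma\to\pm\infty$, and exploit the phase jump $G_+ = e^{-4\pi i u}G_-$ across each cut. The only cosmetic difference is that the paper forms directly the combination $\tfrac{1}{2i}\bigl(e^{2\pi i u}g(\gamma+iy)-e^{-2\pi i u}g(\gamma-iy)\bigr)$, which is exactly the linear combination of your two equations that eliminates the nuisance integral $S$; you instead keep both equations and solve the $2\times 2$ system, obtaining $S=\pi\sin(2u\Sigma)/\sin(2\pi u)$ as a free byproduct.
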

\begin{proof}
	Let's assume $u \in (0,1/2)$, the case $u=0$ being obvious. The function
	\begin{equation*}
		g(z) = \prod_{i=1}^p \left(\frac{\sin(\alpha_i-z)}{\sin(\beta_i-z)}\right)^{2u} = \exp\left(2u\sum_{i=1}^p \log\left(\frac{\sin(\alpha_i-z)}{\sin(\beta_i-z)}\right)\right)
	\end{equation*}
	is $\pi$-periodic and holomorphic in $\C \setminus \bigcup_{\substack{1\leq i \leq p\\n\in \Z}}[\alpha_i+n \pi,\beta_i+n \pi]$. Therefore, the integral 
	\begin{equation*}
		\frac{1}{\pi}\int_0^\pi g(\gamma+iy)\rmd\gamma,\qquad y\in \R\setminus\{0\}
	\end{equation*}
	depends only on the sign of $y$.
	Hence
	\begin{equation*}
		\lim_{y\nearrow\infty}\frac{1}{\pi}\int_0^\pi \rmd\gamma\frac{e^{2\pi i u}g(\gamma+iy)-e^{-2\pi i u}g(\gamma-iy)}{2i} = \lim_{y\searrow 0}\frac{1}{\pi}\int_0^\pi \rmd\gamma \frac{e^{2\pi i u}g(\gamma+iy)-e^{-2\pi i u}g(\gamma-iy)}{2i}.
	\end{equation*}
	Since $g(\gamma \pm i y) \to e^{\pm 2u i \sum_{i=1}^p (\alpha_i - \beta_i)}$ as $y\to\infty$, the left-hand side equals
	\begin{equation*}
		\sin\left(2u\left[\pi - \sum_{i=1}^p(\beta_i-\alpha_i)\right]\right).
	\end{equation*}
	If $\gamma \in [\alpha_i,\beta_i]$ then $g(\gamma + i0) = e^{-4\pi i u} g(\gamma - i0)$, so the integrand on the right-hand side vanishes along the cuts. 
	The integral therefore evaluates to
	\begin{equation*}
		\frac{\sin(2\pi u)}{\pi} \int_0^\pi \rmd \gamma \prod_{i=1}^p \left(\frac{\sin(\alpha_i-\gamma)}{\sin(\beta_i-\gamma)}\right)^{2u} \ind_{\{\gamma \notin [\alpha_i,\beta_i]\}},
	\end{equation*}
	which proves the claimed expression.
\end{proof}
		
\begin{lemma}\label{lem:integral_ident2}
	For any analytic function $f : \R \to \R$ and $u\in(-1/2,1/2)$ we have
	\begin{align}
		Z_p[f]&:=\int \rmd A \prod_{i=1}^p \left(\frac{\sin\alpha_i}{\sin\beta_i}\right)^{2u} f(\beta_i-\alpha_i) \nonumber\\
		&= \int_0^\pi\rmd \theta_0 \frac{\operatorname{sinc}\left(2 \theta_0 u\right)}{\operatorname{sinc}(2\pi u)} \frac{\theta_0^p}{p!} \int_{\R_{>0}^p} \rmd\theta_1\cdots\rmd\theta_p \delta\left(\pi-\sum_{i=0}^p\theta_i\right)\prod_{i=1}^p f(\theta_i),
	\end{align}
	where $\operatorname{sinc}(x) = \sin(x)/x$ and $\rmd A=\rmd\alpha_1\rmd\beta_1\cdots\rmd\alpha_p\rmd\beta_p$ on $\{0<\alpha_1<\beta_1<\cdots<\alpha_p<\beta_p<\pi\}$.

	Furthermore, if $f$ has Laplace transform	\begin{equation}
		\hat{f}(y) = \int_0^\infty \rmd \theta e^{-y \theta} f(\theta),
	\end{equation}
	the previous expression can be summarized as the formal power series identity
	\begin{equation}
		\sum_{p=0}^\infty Z_p[f] = \frac{\pi}{\sin(2\pi u)}\mathcal{L}^{-1}\left\{ \arctan\left(\frac{2u}{y-\hat{f}(y)}\right)\right\}(\pi),
	\end{equation}
	where $\mathcal{L}^{-1}\left\{\hat{f}(y)\right\}(\theta)=f(\theta)$ is the corresponding inverse Laplace transform.
\end{lemma}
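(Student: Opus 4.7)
I would begin by changing integration variables in $Z_p[f]$ from $(\alpha_i,\beta_i)_{i=1}^p$ to the interval sizes $\theta_i=\beta_i-\alpha_i$ together with the $p+1$ gap sizes $g_0=\alpha_1$, $g_j=\alpha_{j+1}-\beta_j$ for $1\le j\le p-1$, and $g_p=\pi-\beta_p$. This is a triangular change of variables with unit Jacobian, under which the chamber $\{0<\alpha_1<\beta_1<\cdots<\alpha_p<\beta_p<\pi\}$ becomes $\{g_j\ge 0,\ \theta_i>0,\ \sum_j g_j+\sum_i\theta_i=\pi\}$. Introducing the total gap $\theta_0:=\sum_j g_j=\pi-\sum_i\theta_i$ as in the statement, the $p$ free gap parameters $(g_0,\ldots,g_{p-1})$ range over a simplex of Euclidean volume $\theta_0^p/p!$, which immediately explains the combinatorial factor on the right-hand side.

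\textbf{Reduction to a gap-simplex identity.} After isolating the $\prod_i f(\theta_i)$ weight, matching with the claimed formula reduces (since $\prod f(\theta_i)$ is symmetric in the $\theta_i$) to verifying the identity
\begin{equation*}
G_p(\theta_1,\ldots,\theta_p)\;:=\;\int_{\substack{g_j\ge 0\\ \sum_j g_j=\theta_0}}\prod_{i=1}^p\left(\frac{\sin\alpha_i}{\sin\beta_i}\right)^{2u}\!dg_0\cdots dg_{p-1}\;=\;\frac{\theta_0^p}{p!}\cdot\frac{\operatorname{sinc}(2\theta_0 u)}{\operatorname{sinc}(2\pi u)}
\end{equation*}
after symmetrizing in $(\theta_1,\ldots,\theta_p)$. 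The striking feature is that the symmetrized average depends on the $\theta_i$ only through the total gap $\theta_0$. The base case $p=1$ is Lemma \ref{lem:integral_ident1} applied with $\alpha_1=\theta_0$ and $\beta_1=\pi$: the indicator restricts $\gamma\in(0,\theta_0)$, and after the substitution $g_0=\theta_0-\gamma$ combined with $\sin(\pi-\gamma)=\sin\gamma$, the identity of Lemma \ref{lem:integral_ident1} becomes precisely $G_1=\pi\sin(2\theta_0 u)/\sin(2\pi u)=\theta_0\,\operatorname{sinc}(2\theta_0 u)/\operatorname{sinc}(2\pi u)$. For general $p$, I would iterate Lemma \ref{lem:integral_ident1}, either by induction on $p$ (integrating out one gap at a time and re-applying the lemma to absorb the auxiliary $\gamma$-integration) or by a single contour/shift argument exploiting the $\pi$-periodicity and holomorphy of $\Phi(z)=\prod_i(\sin(\alpha_i-z)/\sin(\beta_i-z))^{2u}$ used in the proof of Lemma \ref{lem:integral_ident1} to average the origin over the circle $\mathbb{R}/\pi\mathbb{Z}$.

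\textbf{Laplace transform formula.} Granting the first identity, the Laplace part follows by standard manipulations. Using the delta to fix $\theta_0=\pi-\sum_{i\ge 1}\theta_i$, the inner integral is the $p$-fold convolution $f^{*p}(\pi-\theta_0)$, so
\begin{equation*}
\frac{\sin(2\pi u)}{\pi}\sum_{p\ge 0}Z_p[f]\;=\;\int_0^\pi d\theta_0\,\frac{\sin(2u\theta_0)}{\theta_0}\sum_{p\ge 0}\frac{\theta_0^p}{p!}f^{*p}(\pi-\theta_0),
\end{equation*}
a convolution in the upper limit $\pi$. Laplace-transforming in $\pi$ and using $\mathcal{L}\{\sin(2u\theta)/\theta\}(y)=\arctan(2u/y)$, $\mathcal{L}\{\theta^p\sin(2u\theta)/\theta\}(y)=(-1)^p(d/dy)^p\arctan(2u/y)$, together with $\mathcal{L}\{f^{*p}\}(y)=\hat f(y)^p$, turns the sum into
\begin{equation*}
\sum_{p\ge 0}\frac{(-1)^p}{p!}\frac{d^p}{dy^p}\arctan(2u/y)\cdot\hat f(y)^p,
\end{equation*}
which is the Taylor expansion of $\arctan(2u/(y-\hat f(y)))$ around $y$. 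Inverting the Laplace transform then yields the claimed expression.

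\textbf{Main obstacle.} The hard step is establishing the gap-simplex identity for $G_p$, and in particular the fact that its symmetric part depends on $\theta_1,\ldots,\theta_p$ only through the total gap $\theta_0$. Lemma \ref{lem:integral_ident1} encodes exactly the kind of $\pi$-periodicity needed to produce the factor $\sin(2\theta_0 u)/\sin(2\pi u)$, but removing the apparent dependence on the relative positions of the intervals seems to require either an inductive integration of the gaps together with a second (auxiliary) application of Lemma \ref{lem:integral_ident1}, or a single contour-deformation argument on $\Phi(z)$. Once this is in place, everything else is bookkeeping.
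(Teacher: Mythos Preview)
Your overall structure is correct and your Laplace argument is fine (it differs slightly from the paper's, which sums the geometric series in $p$ first via $\int_0^\infty e^{-a\theta}\frac{\sin(b\theta)}{\theta}\,\rmd\theta=\arctan(b/a)$ with $a=y-\hat f(y)$, but your Taylor-expansion route is equally valid).

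The one real gap is the ``gap-simplex identity'' for $G_p$, which you correctly isolate but do not prove. Your second suggestion---``average the origin over the circle $\R/\pi\Z$''---is exactly what the paper does, and it dispatches the identity in one stroke rather than by induction. Concretely: instead of first passing to $(\theta_i,g_j)$ and then trying to evaluate the gap integral, the paper \emph{first} uses shift symmetry and $\pi$-periodicity to write
\[
Z_p[f]=\int \rmd A\,\frac{1}{\pi-\sum_i(\beta_i-\alpha_i)}\int_0^\pi\rmd\gamma\,\prod_{i=1}^p\left(\frac{\sin(\alpha_i-\gamma)}{\sin(\beta_i-\gamma)}\right)^{2u}f(\beta_i-\alpha_i)\,\mathbf{1}_{\{\gamma\notin[\alpha_i,\beta_i]\}}.
\]
This is justified by viewing the chain domain as ``cyclic configuration of $p$ intervals on $\R/\pi\Z$ with a marked gap point at $0$'': introducing a second marked gap point $\gamma$ and then forgetting the first costs exactly the factor $1/(\pi-\sum\theta_i)$. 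Now the inner $\gamma$-integral is \emph{precisely} the left-hand side of Lemma~\ref{lem:integral_ident1}, which evaluates to $\pi\sin(2u\theta_0)/\sin(2\pi u)$ with $\theta_0=\pi-\sum_i\theta_i$. Hence the integrand over $\rmd A$ becomes $\operatorname{sinc}(2u\theta_0)/\operatorname{sinc}(2\pi u)\cdot\prod_i f(\theta_i)$, which is manifestly independent of the gap variables; the gap integral is then trivially the simplex volume $\theta_0^p/p!$.

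So your inductive option is unnecessary, and your contour/shift option is the right one---but the point is that applying it \emph{before} the change of variables turns the hard-looking $G_p$ identity into a single application of Lemma~\ref{lem:integral_ident1}. (Incidentally, the unsymmetrized $G_p(\theta_1,\ldots,\theta_p)$ does \emph{not} equal $\tfrac{\theta_0^p}{p!}\operatorname{sinc}/\operatorname{sinc}$ pointwise; only the symmetrized version does, so your proposed induction on $p$ would have to track this carefully. The paper's reformulation sidesteps the issue entirely.)
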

\begin{proof}
	Due to the shift symmetry in the Lebesgue measure and the $\pi$-periodicity of the integrand, we have
	\begin{align}
		 Z_p[f]&= \frac{1}{\pi-\sum_{i=1}^p(\beta_i-\alpha_i)} \int_0^\pi \rmd\gamma \int \rmd A \prod_{i=1}^p \left(\frac{\sin(\alpha_i-\gamma)}{\sin(\beta_i-\gamma)}\right)^{2u} f(\beta_i-\alpha_i)\ind_{\{\gamma \notin [\alpha_i,\beta_i]\}}.
	\end{align}
	We can now use Lemma~\ref{lem:integral_ident1} to give us
	\begin{align}
		Z_p[f]&=\int \rmd A  \frac{\operatorname{sinc}\left(2u\left[\pi - \sum_{i=1}^p(\beta_i-\alpha_i)\right]\right)}{\operatorname{sinc}(2\pi u)}\prod_{i=1}^p f(\beta_i-\alpha_i)\nonumber\\    
		&= \int_0^\pi \rmd\theta_0\frac{\operatorname{sinc}\left(2u\theta_0\right)}{\operatorname{sinc}(2\pi u)}\int \rmd A \,\delta\left(\pi-\theta_0-\sum_{i=0}^p(\beta_i-\alpha_i)\right)\prod_{i=1}^p f(\beta_i-\alpha_i)\nonumber\\
		&= \int_0^\pi\rmd \theta_0 \frac{\operatorname{sinc}\left(2u \theta_0\right)}{\operatorname{sinc}(2\pi u)} \frac{\theta_0^p}{p!} \int_{\R_{>0}^p} \rmd\theta_1\cdots\rmd\theta_p \delta\left(\pi-\theta_0-\sum_{i=0}^p\theta_i\right)\prod_{i=1}^p f(\theta_i),\nonumber
	\end{align}
	which proves the first equation.

	Using the Laplace transform, we get
	\begin{align}
		Z_p[f]&= \int_0^\pi\rmd \theta_0 \frac{\operatorname{sinc}(2u \theta_0)}{\operatorname{sinc}(2\pi u)} \frac{\theta_0^p}{p!} \mathcal{L}^{-1}\left\{\hat{f}(y)^p\right\}(\pi-\theta_0)\nonumber\\
		&=  \int_0^\infty\rmd \theta_0 \frac{\operatorname{sinc}(2u \theta_0)}{\operatorname{sinc}(2\pi u)} \frac{\theta_0^p}{p!} \mathcal{L}^{-1}\left\{e^{-\theta_0 y}\hat{f}(y)^p\right\}(\pi).
	\end{align}
	In particular, summing over $p$,
	\begin{align}
		\sum_{p=0}^\infty Z_p[f] &= \int_0^\infty\rmd \theta_0 \frac{\operatorname{sinc}(2u \theta_0)}{\operatorname{sinc}(2\pi u)}\mathcal{L}^{-1}\left\{e^{-\theta_0(y-\hat{f}(y))}\right\}(\pi)\nonumber\\
		&=\frac{\pi}{\sin(2\pi u)}\mathcal{L}^{-1}\left\{ \arctan\left(\frac{2u}{y-\hat{f}(y)}\right)\right\}(\pi), 
	\end{align}
	where we used $\int_0^\infty e^{-ax}\sin(bx)\frac{\dd{x}}{x}=\arctan(b/a)$.
\end{proof}

We now have all the ingredients to prove proposition~\ref{prop:hatXA}.
\begin{proof}[Proof of proposition~\ref{prop:hatXA}]
	Let us apply Lemma~\ref{lem:integral_ident2} to 
	\begin{align}
		f(\theta)=-4 F(r,\theta) = \frac{-2\sqrt{2r}}{\theta} J_1(2\theta\sqrt{2r}),
	\end{align}
	because then, using Lemma~\ref{lem:hatX_A_as_angle_integral}, 
	\begin{align}
		\hat{X}_A(u,r) = - \sum_{p=1}^\infty Z_p[f] = 1 - \sum_{p=0}^\infty Z_p[f].\label{eq:XhatZsum}
	\end{align}
	The Laplace transform of $f$ is
	\begin{align*}
		\hat{f}(y) = \int \rmd \theta e^{-y \theta} f(\theta) = y - \sqrt{y^2+8r}.
	\end{align*}
	Using the series expansion
	\begin{align*}
		\arctan\left(\frac{2u}{y-\hat{f}(y)}\right) &= \arctan\left(\frac{2u}{\sqrt{y^2+8r}}\right)=\sum_{p=0}^\infty \frac{(-1)^p}{2p+1} (2u)^{2p+1} (y^2+8r)^{-p-1/2},
	\end{align*}
	and the inverse Laplace transform 
	\begin{equation*}
		\mathcal{L}^{-1}\left\{ (y^2+8r)^{-p-1/2}\right\}(\pi) = \left(\frac{\pi}{2\sqrt{2r}}\right)^p \frac{1}{(2p-1)!!} J_{p}(2\pi\sqrt{2r}), 
	\end{equation*}
	we conclude that
	\begin{align}
		\sum_{p=0}^\infty Z_p[f] &= \frac{\pi}{\sin(2\pi u)}\mathcal{L}^{-1}\left\{ \arctan\left(\frac{2u}{\sqrt{y^2+8r}}\right)\right\}(\pi)\nonumber\\
		&=\frac{2\pi u}{\sin(2\pi u)}\sum_{p=0}^\infty \frac{u^{2p}}{(2p+1)!!}  \left(-\frac{\sqrt{2}\pi}{\sqrt{r}}\right)^p J_{p}(2\pi\sqrt{2r}) \nonumber\\
		&= \frac{2\pi u}{\sin(2\pi u)}\sum_{p=0}^\infty \frac{u^{2p}}{(2p+1)!!} \frac{\partial^{p+1}}{\partial r^{p+1}}\frac{\sqrt{r}}{\sqrt{2}\pi}J_1(2\pi\sqrt{2r}).
	\end{align}
	Together with \eqref{eq:XhatZsum} this gives the claimed formula.
\end{proof}

\subsection{Around a boundary vertex}
We now analyse the distance differences when passing through a boundary vertex $\bvertex$ of degree $k \geq 2$.
To lighten notation in this section, we write $w_j = w_{\bvertex,j}$, $v_j = v_{\bvertex,j}$ and $d_j = d_{\vec{e}_{\bvertex,j}}$ for $j=1,\ldots,k$. 
Moreover, we assume the ordering of edges around $\bvertex$ is chosen such that $\vec{\edge}_{\bvertex,1}$ is the entrance edge, connecting to boundary vertex $1$, and $\vec{\edge}_{\bvertex,\ell}$ is the exit edge, connecting to the boundary vertex with label $2$. 
Recall that Proposition~\ref{prop:hatXB} involves the function $E_{k,\ell}(L,u)$ given by the integral of $e^{2u(d_1 - d_\ell)}$ over the polytope $(\Delta_k^{(L/2)})^2$.

\begin{figure}
	\centering 
	\includegraphics[width=.9\linewidth]{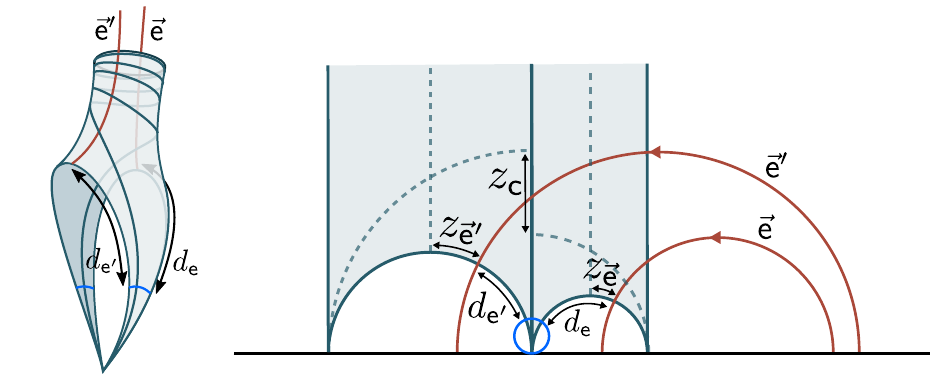}
	\caption{Left: illustration of the two ideal triangles spiralling into a boundary associated to $\bvertex$ of degree $\deg(\bvertex)=2$. Right: the relation between distances to the origin and the shears and half-shears. Note that $z_\corner$ is negative here, while $z_{\vec{\edge}},z_{\vec{\edge}'}$ are positive.\label{fig:distbdry}}
\end{figure}

Recall from Section~\ref{sec:spinetoshear} the construction  of the half-shear $z_{\vec{\edge}}$ associated to an oriented edge $\vec{\edge}$ starting at a boundary vertex (Figure~\ref{fig:shearw}), and the shear $z_{\corner}$ associated to a corner $\corner$ (Figure~\ref{fig:shearv}).
If $\vec{\edge} = \vec{\edge}_{\bvertex,j}$ and $\vec{\edge}'=\vec{\edge}_{\bvertex,j+1}$ share a corner $\corner = \corner_{\bvertex,j}$ the picture looks like that of Figure~\ref{fig:distbdry}.
Comparing to the indicated distances $d_{\edge} = d_j$ and $d_{\edge'} = d_{j+1}$ to the origin horocycle, it should be clear that we have the relation 
\begin{align}
	d_{\edge'} + z_{\vec{\edge}'} = d_{\edge} - z_{\vec{\edge}} - z_\corner.
\end{align}
Together with \eqref{eq:shearw} and \eqref{eq:shearv}, this gives
\begin{equation}
	e^{d_{j+1} - d_{j}} = e^{-w_{j+1}-w_{j}} e^{-z_\corner} = e^{v_{j}} \frac{1-e^{-w_{j+1}}}{e^{w_{j}}-1}.
\end{equation}
Taking products on both sides, we can compare $d_\ell$ to $d_1$ for $\ell = 1,\ldots,k$, 
\begin{equation}
	e^{2u (d_{\ell} - d_1)} = e^{2u \sum_{j=1}^{\ell-1} v_j} e^{-2u \sum_{j=2}^{\ell} w_{j}} \left(1-e^{-w_{\ell}}\right)^{2u}\left(1-e^{-w_{1}}\right)^{-2u}.
\end{equation}
Thanks to the separation of variables we can express $E_{k,\ell }(L,u)$ as 
\begin{align}
	E_{k,\ell }(L,u)= 2^{k-1} I_{k,\ell}(u,L)\tilde{I}_{k,\ell}(u,L),\label{eq:XBfactor}
\end{align}
where the factor $2^{k-1}$ comes from the normalization of the measure $\operatorname{Vol}_\tree$ and we have introduced
\begin{align}
	I_{k,\ell}(u,L) &:= \int_{\Delta_k^{(L/2)}} \rmd v_1\cdots \rmd v_{k-1} e^{2u \sum_{i=1}^{\ell-1} v_i}.\\
	\tilde{I}_{k,\ell}(u,L) &:= \int_{\Delta_k^{(L/2)}} \rmd w_1\cdots \rmd w_{k-1}e^{-2u \sum_{i=2}^{\ell} w_i}\left(1-e^{-w_\ell}\right)^{-2u}\left(1-e^{-w_1}\right)^{2u}.
\end{align}

\begin{proof}[Proof of Proposition~\ref{prop:hatXB}]
	Recall that we should compute 
	\begin{align*}
		\hat{X}_{\mathrm{B}}(L,u,r) = \sum_{k=2}^\infty \sum_{\ell = 2}^k r^{k-2} E_{k,\ell}(L,u) = \sum_{k=2}^\infty \sum_{\ell = 2}^k r^{k-2} 2^{k-1} I_{k,\ell}(u,L)\tilde{I}_{k,\ell}(u,L)
	\end{align*}
	We can switch from integrating over the simplex to the full $\R^k_{>0}$ by Laplace transforming,
	\begin{align}
		\int_0^\infty \rmd L e^{-Lz} I_{k,\ell}(u,L) &= 2 \int_0^\infty \rmd v_1\cdots \rmd v_k e^{-2z\sum_{i=1}^k v_i +2u \sum_{i=1}^{\ell-1} v_i} \nonumber\\
		&= 2^{1-k} (z-u)^{1-\ell}z^{\ell-k-1} \label{eq:Ikllaplace}\\
		&= 2^{1-k}\sum_{m=0}^{\infty} \binom{\ell+m-2}{m} u^m z^{-k-m}.\nonumber
	\end{align}
	Taking the inverse Laplace tranform yields
	\begin{align}
		I_{k,\ell}(u,L) = \sum_{m=0}^{\infty}  u^m L^{k+m-1} \frac{2^{1-k}}{(k+m-1)!}\binom{\ell+m-2}{m}.\label{eq:Iklexpr} 
	\end{align}
	Similarly for $\tilde{I}_{k,\ell}(u,L)$ the Laplace transform factorizes,
	\begin{align}
		\int_0^\infty \rmd L e^{-Lz} \tilde{I}_{k,\ell}(u,L) &= 2 \int_0^\infty \rmd w_1\cdots \rmd w_k e^{-2z\sum_{i=1}^kw_i - 2u \sum_{i=2}^{\ell} w_i} \left(1-e^{-w_\ell}\right)^{-2u}\left(1-e^{-w_1}\right)^{2u}\nonumber\\
		&= 2^{3-k}(z+u)^{2-\ell}z^{\ell-k} \int_0^\infty \rmd w_1 e^{-2z w_1}(1-e^{-w_1})^{2u}\int_0^\infty \rmd w_\ell e^{-2(z+u) w_\ell}(1-e^{-w_\ell})^{-2u}\nonumber\\
		&= 2^{3-k}(z+u)^{2-\ell}z^{\ell-k} \int_0^1 \rmd x\, x^{2z-1}(1-x)^{2u}\int_0^\infty \rmd y\, y^{2(z+u)-1}(1-y)^{-2u}\nonumber\\
		&= 2^{3-k}(z+u)^{2-\ell}z^{\ell-k} \frac{\Gamma(2z)\Gamma(1+2u)}{\Gamma(2z+2u+1)} \,\frac{\Gamma(2z+2u)\Gamma(1-2u)}{\Gamma(2z+1)} \nonumber\\
		&= 2^{1-k}(z+u)^{1-\ell}z^{\ell-k-1} \Gamma(1+2u)\Gamma(1-2u),
	\end{align}
	where in the second-to-last equality we recognized Beta integrals.
	Using the reflection formula for the Gamma function
	\begin{equation}
		\Gamma(1-2u)\Gamma(1+2u) = \frac{2\pi u}{\sin 2\pi u},
	\end{equation}
	and comparing to \eqref{eq:Ikllaplace}, it follows immediately that
	\begin{equation}
		\tilde{I}_{k,\ell}(u,L) = \frac{2\pi u}{\sin 2\pi u} I_{k,\ell}(-u,L).
	\end{equation}

	Summing over $\ell$, we find
	\begin{align}
		\sum_{\ell=2}^k I_{k,\ell}(u,L)\tilde{I}_{k,\ell}(u,L) &= \frac{2\pi }{u\sin 2\pi u} \sum_{\ell=2}^k u^2I_{k,\ell}(-u,L)I_{k,\ell}(u,L)\nonumber\\
		&\!\!\stackrel{\eqref{eq:Iklexpr}}{=} \frac{2\pi 4^{1-k}L^{2k-4}}{u\sin 2\pi u} \sum_{\ell=2}^k\sum_{p=1}^\infty u^{2p}L^{2p} \sum_{m=0}^{2p-2} \frac{(-1)^m}{(k+m-1)!}\binom{\ell+m-2}{m} \breakline
		\frac{1}{(k+2p-m-3)!}\binom{\ell+2p-m-4}{2p-m-2}\nonumber\\
		&=\frac{2\pi 4^{1-k}L^{2k-4}}{u\sin 2\pi u} \sum_{p=1}^\infty u^{2p}L^{2p} 2\frac{p!}{(2p)!(k-2)!(k+p-2)!}, 
	\end{align}
	where in the last equality we use the summation identity of Lemma~\ref{lem:hypident} below.
	
	Putting everything together we obtain
	\begin{align}
		\hat{X}_{\mathrm{B}}(L,u,r) &= \sum_{k=2}^\infty r^{k-2} 2^{k-1} \sum_{\ell=2}^k I_{k,\ell}(u,L)\tilde{I}_{k,\ell}(u,L)\nonumber\\
		&= \frac{2\pi u}{\sin(2\pi u)} \sum_{k=2}^\infty \sum_{p=1}^\infty u^{2p-2}\frac{2^{p}p!}{(2p)!} \frac{r^{k-2} 2^{2-k-p}L^{2(k+p-2)}}{(k-2)!(k+p-2)!}\nonumber\\
		&= \frac{2\pi u}{\sin(2\pi u)} \sum_{p=0}^\infty \frac{u^{2p}}{(2p+1)!!} \sum_{k=2}^\infty \frac{r^{k-2} 2^{1-k-p}L^{2(k+p-1)}}{(k-2)!(k+p-1)!}\nonumber\\
		&= \frac{2 \pi u}{\sin(2\pi u)} \sum_{p=0}^\infty \frac{u^{2p}}{(2p+1)!!}\frac{\partial^{p+1}}{\partial r^{p+1}} \sum_{k=1-p}^\infty  \frac{2^{1-k-p}L^{2(k+p-1)}r^{k+p-1}}{(k+p-1)!^2}\nonumber\\
		&= \frac{2\pi u}{\sin(2\pi u)} \sum_{p=0}^\infty \frac{u^{2p}}{(2p+1)!!}\frac{\partial^{p+1}}{\partial r^{p+1}}  I_0(L\sqrt{2r}),
	\end{align}
	as claimed.
\end{proof}

\begin{lemma}\label{lem:hypident}
	The following summation identity holds for every $p\geq 1$ and $k\geq 2$, 
	\begin{align*}
		\sum_{\ell=2}^k\sum_{m=0}^{2p-2} (-1)^m\frac{\binom{\ell+m-2}{m}}{(k+m-1)!} \frac{\binom{\ell+2p-m-4}{2p-m-2}}{(k+2p-m-3)!} &= 2\frac{p!}{(2p)!(k-2)!(p+k-2)!}.
	\end{align*}
\end{lemma}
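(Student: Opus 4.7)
The plan is to recognize the double sum on the left-hand side as a coefficient in the generating function $\sum_{\ell=2}^k I_{k,\ell}(u,L)\,I_{k,\ell}(-u,L)$ that already appears (implicitly) in the proof of Proposition~\ref{prop:hatXB}, and then to evaluate this generating function in closed form via Kummer's confluent hypergeometric function.

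First, using the explicit series \eqref{eq:Iklexpr}, namely $I_{k,\ell}(u,L)=\sum_{m\geq 0}\binom{\ell+m-2}{m}u^m\cdot 2^{1-k}L^{k+m-1}/(k+m-1)!$, a direct multiplication together with the sign identity $(-1)^{q-m}=(-1)^m$ when $q=2p-2$ is even shows that the left-hand side of the lemma equals
\[
 4^{k-1}\,[u^{2p-2}L^{2k+2p-4}]\sum_{\ell=2}^{k} I_{k,\ell}(u,L)\,I_{k,\ell}(-u,L).
\]
Rewriting $I_{k,\ell}(u,L)=\tfrac{(L/2)^{k-1}}{(k-1)!}M(\ell-1,k,uL)$ in terms of Kummer's function $M$ and substituting $j=\ell-1$, the lemma becomes equivalent to the bilinear identity
\[
 T_k(z)\coloneqq \sum_{j=1}^{k-1}M(j,k,z)\,M(j,k,-z)\;=\;\sum_{p\geq 1}\frac{2\,p!\,((k-1)!)^2}{(2p)!\,(k-2)!\,(k+p-2)!}\,z^{2p-2}.
\]

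To prove this bilinear identity I would insert the beta-integral representation $M(j,k,z)=\tfrac{(k-1)!}{(j-1)!(k-j-1)!}\int_0^1 e^{zs}s^{j-1}(1-s)^{k-j-1}\rmd s$, swap the order of summation and integration, and recognize the resulting $j$-sum as $\sum_i\binom{k-2}{i}^2(st)^i((1-s)(1-t))^{k-2-i}$. Via the Rodrigues-type identity $\sum_i\binom{n}{i}^2 A^i B^{n-i}=(A-B)^n P_n\!\bigl(\tfrac{A+B}{A-B}\bigr)$ this collapses to $(s+t-1)^{k-2}P_{k-2}\!\bigl(\tfrac{1-s-t+2st}{s+t-1}\bigr)$. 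After the rotation $\xi=s-t,\ \eta=s+t-1$, the Legendre argument takes the form $(1+\eta^2-\xi^2)/(2\eta)$, precisely the shape appearing in the Heine--Laplace generating function $\sum_n P_n(x)\eta^n=(1-2x\eta+\eta^2)^{-1/2}$ (evaluating here to $1/|\xi|$). Expanding $e^{z\xi}$ in $z$ and integrating term by term against $\eta^{k-2}P_{k-2}$ over the rotated-square region $|\xi|+|\eta|\leq 1$ should collapse each Taylor coefficient, via classical Beta-function evaluations, to the ratio of factorials on the right-hand side.

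The main obstacle is the clean execution of this Legendre integral. If that becomes unwieldy, a robust backup is to verify the equivalent formula $[z^{2p-2}]T_k(z)=\tfrac{2p!((k-1)!)^2}{(2p)!(k-2)!(k+p-2)!}$ by induction on $p$: the base case $p=1$ reduces to $M(j,k,0)=1$, giving $T_k(0)=k-1$ as required, and the induction step can be obtained from a creative-telescoping certificate applied to the summand in $m$ (which is hypergeometric in $m$ for fixed $k$ and $p$, hence amenable to Zeilberger's algorithm); the resulting recurrence in $p$ is easily matched against the known recursion satisfied by the factorial ratio on the right-hand side.
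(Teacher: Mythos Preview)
Your reformulation is correct and elegant: the identification of the left-hand side with $4^{k-1}[u^{2p-2}L^{2k+2p-4}]\sum_{\ell}I_{k,\ell}(u,L)I_{k,\ell}(-u,L)$, the rewriting via Kummer's $M$, the integral representation, and the Legendre-polynomial rewriting $\sum_i\binom{n}{i}^2A^iB^{n-i}=(A-B)^nP_n\bigl(\tfrac{A+B}{A-B}\bigr)$ all check out, and the coordinate change $(\xi,\eta)=(s-t,s+t-1)$ indeed brings the Legendre argument to $(1+\eta^2-\xi^2)/(2\eta)$ with the generating function collapsing to $1/|\xi|$.

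The gap is in the step you flag yourself as the ``main obstacle''. The Heine--Laplace observation $(1-2x\eta+\eta^2)^{-1/2}=1/|\xi|$ is a statement about the \emph{sum over all degrees} $\sum_n\eta^nP_n(x)$, whereas you need the integral of $\xi^{2p-2}$ against a \emph{single} $\eta^{k-2}P_{k-2}(x)$. There is no mechanism in your sketch to isolate the degree-$(k-2)$ term. If you unwind the Legendre polynomial back to $(s,t)$ coordinates and expand $(s-t)^{2p-2}$ binomially, you are left with a double sum of Beta integrals $\sum_i\binom{k-2}{i}^2\sum_j\binom{2p-2}{j}(-1)^jB(i+2p-2-j+1,k-1-i)B(i+j+1,k-1-i)$ that is no simpler than the original double sum; the Legendre detour has merely relabelled it. So ``should collapse via classical Beta-function evaluations'' is not justified, and I do not see that it does.

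Your backup is also not a proof as written. The inner $m$-sum is hypergeometric and Zeilberger will certainly produce a closed form or a recurrence, but the result still depends on $\ell$, and you then need the $\ell$-sum; a recurrence in $p$ for the full double sum requires either a multivariate creative-telescoping argument or a two-stage computation, neither of which you have carried out.

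For comparison, the paper's proof does exactly the two-stage hypergeometric evaluation directly: the $m$-sum is a terminating ${}_3F_2$ at $1$, which is reduced via a contiguous relation and a known transformation to a product of rising factorials; the resulting $\ell$-sum is then a terminating ${}_2F_1$ at $1$ (Chu--Vandermonde), yielding the closed form. Your generating-function viewpoint is more conceptual, but to make it into a proof you would need to actually evaluate the Legendre integral for fixed $k$, and the route you suggest does not do this.
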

\begin{proof}
	We give a proof based on known hypergeometric identities.
	Performing the sum over $m$ on the left-hand side we obtain 
	\begin{align*}
		\sum_{\ell=2}^k\frac{(\ell+2p-4)!}{(\ell-2)!(2p-2)!(k+2p-2)!(k-1)!}{_3F_2}\left[\begin{smallmatrix}
			2p-2&3-k-2p&\ell-1\\k&4-2p-\ell
		\end{smallmatrix};1\right],
	\end{align*}
	where ${_pF_q}$ is a generalized hypergeometric function.
	According to \cite[equations (2.3.3.2) and (1.1.5)]{Slater1966Generalized} this hypergeometric function evaluates to
	\begin{align}
		{_3F_2}&\left[\begin{smallmatrix}
			2p-2&3-k-2p&\ell-1\\k&4-2p-\ell
		\end{smallmatrix};1\right]=\frac{(3-p-\ell)^{(\ell-2)}(1+k-\ell)^{(\ell-2)}}{(4-2p-\ell)^{(\ell-2)}(p+k-\ell)^{(\ell-2)}}{_3F_2}\left[\begin{smallmatrix}
			2p-2&3-k-2p&1\\k&2-2p
		\end{smallmatrix};1\right]\nonumber\\
		&= \frac{(3-p-\ell)^{(\ell-2)}(1+k-\ell)^{(\ell-2)}}{(4-2p-\ell)^{(\ell-2)}(p+k-\ell)^{(\ell-2)}}
		\left({_2F_1}\left[\begin{smallmatrix}
			3-k-2p&1\\k
		\end{smallmatrix};1\right]-\frac{(3-k-2p)^{(2p-1)}}{k^{(2p-1)}}{_2F_1}\left[\begin{smallmatrix}
			2-k&1\\k+2p-1
		\end{smallmatrix};1\right]\right)\nonumber\\
		&= \frac{(3-p-\ell)^{(\ell-2)}(1+k-\ell)^{(\ell-2)}}{(4-2p-\ell)^{(\ell-2)}(p+k-\ell)^{(\ell-2)}}\frac{2(k-1)}{2p+2k-4},
	\end{align}
	where $a^{(n)}=a(a+1)(a+2)\dots(a+n-1)$ is the rising factorial.
	With this the desired sum becomes 
	\begin{align}
		&\sum_{\ell=2}^k\frac{(\ell+2p-4)!}{(\ell-2)!(2p-2)!(k+2p-2)!(k-1)!}\frac{(3-p-\ell)^{(\ell-2)}(1+k-\ell)^{(\ell-2)}}{(4-2p-\ell)^{(\ell-2)}(p+k-\ell)^{(\ell-2)}}\frac{2(k-1)}{2p+2k-4}\nonumber\\
		&=\sum_{\ell=2}^k\frac{(k+p-\ell-1)!(p+\ell-3)!}{(k-\ell)!(\ell-2)!(p-1)!(p+k-2)!(2p+k-3)!}\nonumber\\
		&= \frac{1}{(p+k-2)!(2p+k-3)!(k-2)!} {_2F_1}\left[\begin{smallmatrix}
			p+1&-k\\-k+p
		\end{smallmatrix};1\right]=2 \frac{p!}{(p+k-2)!(2p)!(k-2)!}, 
	\end{align}
	as claimed.
\end{proof}

\bibliographystyle{siam}
\bibliography{ref}	
	
\end{document}